\begin{document}
\theoremstyle{plain}
\newtheorem{theorem}{Theorem}[section]
\newtheorem{lemma}[theorem]{Lemma}
\newtheorem{proposition}[theorem]{Proposition}
\newtheorem{corollary}[theorem]{Corollary}

\theoremstyle{definition}
\newtheorem{comment}{Comment}[section]
\newtheorem{definition}[theorem]{Definition}
\newtheorem{example}[theorem]{Example}
\newtheorem{counterexample}[theorem]{Counterexample}
\newtheorem{problem}[theorem]{Problem}
\newtheorem{note}[theorem]{Note}

\theoremstyle{remark}
\newtheorem*{notation}{Notation}
\newtheorem*{remark}{Remark}
\numberwithin{equation}{section}

\newcommand{\N}{\mathbb{N}}
\newcommand{\Z}{\mathbb{Z}}
\newcommand{\Q}{\mathbb{Q}}
\newcommand{\R}{\mathbb{R}}
\newcommand{\C}{\mathbb{C}}
\newcommand{\SP}{\: \: \: \: \:}
\title[$C^{*}$-algebras of Toeplitz and composition operators]{The $C^{*}$-algebra generated by irreducible Toeplitz and composition operators}

\author[M.S. Sarvestani, M. Amini]{Masoud Salehi Sarvestani, Massoud Amini}

\address[\textbf{Masoud Salehi Sarvestani}]{Department of Pure Mathematics\\ Faculty of Mathematical Sciences\\
 University of Tarbiat Modares\\
 Tehran\\ Iran}
\email{m.salehisarvestani@modares.ac.ir}

\address[\textbf{Massoud Amini}]{Department of Pure Mathematics\\ Faculty of Mathematical Sciences\\
 University of Tarbiat Modares\\
 Tehran\\ Iran}
\email {mamini@modares.ac.ir }

\subjclass[2000]{47B33, 47B32} \keywords {$C^*$-algebras, Shift operator, irreducible Toeplitz operator,
composition operator, linear-fractional map, automorphism of the unit disk.}

\begin{abstract}
We describe the $C^*$-algebra generated by an irreducible Toeplitz operator $T_{\psi}$, with continuous symbol $\psi $ on the unit circle $\mathbb{T}$, and finitely many composition operators on the Hardy space $H^2$ induced by certain  linear-fractional self-maps of the unit disc, modulo the ideal of compact operators $K(H^2)$. For composition operators with automorphism symbols, we show that this algebra is not isomorphic to the one generated by the shift and composition operators.
\end{abstract}

\maketitle
\section{Introduction}
\pagestyle{plain} The Hardy space $H^2=H^2(\mathbb{D})$ is the collection of all analytic functions $f$ on the open unit disk $\mathbb{D}$ satisfying the norm condition
$$\|f\|^2:=\lim_{r \rightarrow 1}\frac{1}{2\pi}\int_{0}^{2\pi}|f(re^{i\theta})|^2d\theta<\infty.$$
For any analytic self-map $\varphi$ of the open unit disk $\mathbb{D} $, a bounded composition operator on $H^2$ is defined by
$$C_\varphi :H^2\rightarrow H^2,~~~~C_\varphi (f)=f\circ\varphi.$$

If $f\in H^2$, then the radial limit
$f(e^{i\theta}):=\lim_{r\rightarrow1}f(re^{i\theta})$ exists almost
everywhere on the unit circle $\mathbb{T}$. Hence we can consider
$H^2$ as a subspace of $L^2(\mathbb{T})$. Let $\phi$ is a bounded
measurable function on $\mathbb{T}$ and $P_{H^2}$ be the
orthogonal projection of $L^2(\mathbb{T})$ (associated with
normalized arc-length measure on $\mathbb{T}$ ) onto $H^2$. The
Toeplitz operator $T_\phi$ is defined on $H^2$ by $T_\phi
f=P_{H_2}(\phi f)$ for all $f\in H^2$. Coburn in \cite{co67,co69}
shows that the unital $C^*$-algebra $C^*(T_z)$ generated by the unilateral
shift operator $T_z$ contains compact operators on $H^2$ as an ideal and every element $a\in C^*(T_z)$
has a unique representation $a=T_\phi +k$ for some $\phi \in
C(\mathbb{T})$ and $k\in \mathfrak K:=K(H^2)$. He shows that $C^*(T_z)/\mathfrak K$ is
$*$-isomorphic to $C(\mathbb{T})$, and determines
essential spectrum of Toeplitz operators with continuous symbol.

Recently the unital $C^*$-algebra
generated by the shift operator $T_z$ and the composition operator
$C_\varphi$ for a linear-fractional self-map $\varphi$ of
$\mathbb{D}$ is studied. For a linear-fractional self-map $\varphi$ on $ \mathbb{D}$, if
$\|\varphi\|_\infty <1$ then
$C_\varphi$ is a compact operator on $H^2$ \cite{sh93}. Therefore
one should consider those linear-fractional self-maps $\varphi$ which satisfy
$\|\varphi\|_\infty=1$. If moreover $\varphi$ is an  automorphism of
$\mathbb{D}$, then $C^*(T_z,C_\varphi)/\mathfrak K$ is
$*$-isomorphic to the crossed product $C(\mathbb{T})\rtimes_\varphi
\mathbb{Z}$ \cite {Ju07-1,Ju07-2}. When $\varphi$ is not an
automorphism there are three deferent cases:
\begin {itemize}
 \item [(i)]$\varphi$ has only one fixed point $\gamma$ which is on the unit circle $\mathbb{T}$ (i.e. $\varphi$ is a parabolic map). In this case $C^*(T_z,C_\varphi)/\mathfrak K$ is a commutative $C^*$-algebra isomorphic to the minimal unitization of $C_\gamma (\mathbb{T})\oplus C_0([0,1])$, where $C_\gamma (\mathbb{T})$ is the set of functions in $C (\mathbb{T})$ vanishing at $\gamma \in \mathbb{T}$ and $C_0([0,1])$ is the set of all $f \in C([0,1])$ vanishing at zero \cite{Qu13}.
 \item [(ii)] $\varphi$ has a fixed point $\gamma \in \mathbb T$ and fixes another point in $\mathbb{C}\cup \{\infty\}$ (equivalently $\varphi$ has a fixed point $\gamma \in \mathbb{T}$ and $\varphi^{'}(\gamma)\neq1$ ). In this case $C^*(T_z,C_\varphi)/\mathfrak K$ is $*$-isomorphic to the minimal unitization of $C_\gamma(\mathbb{T})\oplus(C_0([0,1])\rtimes \mathbb{Z})$ \cite{Qu13}.
 \item [(iii)]$\varphi$ fixes no point of $\mathbb{T}$ but there exist distinct points $\gamma,\eta \in \mathbb{T}$ with $\varphi(\gamma)=\eta$. In this case $C^*(T_z,C_\varphi)/\mathfrak K$ is the $C^*$-subalgebra $\mathcal{D}$ of $C(\mathbb{T})\oplus M_2(C([0,1]))$ defined by
     $$\mathcal{D}=\left\{(f,V)\in C(\mathbb{T})\oplus M_2(C([0,1])):V(0)=\left[
                                                                       \begin{array}{cc}
                                                                         f(\gamma) & 0 \\
                                                                         0 & f(\eta) \\
                                                                       \end{array}
                                                                     \right]
     \right\}$$ \cite {kmm07}.
 \end {itemize}

This paper generalizes the above results. The generalization is two fold. We replace the shift operator $T_z$ by an irreducible Toeplitz operator $T_{\psi}$ with continuous symbol $\psi $ on $\mathbb{T}$, and a single composition operator with finitely many composition operators on the Hardy space $H^2$ induced by certain linear-fractional self-maps of $\mathbb D$.
The paper is organized as follows. In section \ref{p} we review basic facts and known results which are used later in the paper. In section \ref{r}  we find the $C^*$-algebra
$C^*(T_z,C_{\varphi_1},...,C_{\varphi_n})/\mathfrak K$, where
$\varphi_1,...,\varphi_n$ are as in the case (ii). In section \ref{s} we replace the shift operator $T_z$ by an irreducible Toeplitz operator $T_\psi$ with symbol $\psi \in C(\mathbb{T})$ and obtain
more general results in the above cases. When $\varphi$ is an automorphism, the composition operator $C_{\varphi}$ often generates the unilateral shift operator $T_z$. We investigate this case in section \ref{t}.

\section{Preliminaries}\label{p}
In this section we review some of the known results which are used in the next sections. Here we use $C_0([0,1])$ to denote the set of functions in $C([0,1])$  vanishing at zero and $[T]$ to denote the
coset of operator $T\in B(H^2)$ in the Calkin algebra $B(H^2)/K(H^2)$.

A linear-fractional self-map $\rho$ with fixed pint $\gamma \in \mathbb{T}$ is {\it parabolic} if and only if $\rho^{'}(\gamma)=1$. In this case, $\rho$ is conjugate to a translation on the right half plan $\Omega :=\{z \in \mathbb{C}:Rez>0\}$ via the conformal map $\alpha:z\mapsto (\gamma+z)/(\gamma-z)$ of $\mathbb{D}$ onto $\Omega$. Therefor $\alpha \circ \rho \circ \alpha^{-1}$ is the translation map $z\mapsto z+a$ for some $a \in \mathbb{C}$ with non-negative real part. We denote the map $\rho$ by $\rho_{\gamma,a}$. This is an automorphism of $\mathbb{D}$ if and only if $Rea=0$. For $\gamma \in \mathbb{T}$, the set $\mathbb{P}_\gamma :=\{C_{\rho_{\gamma,a}}: Rea>0\}$ consists of all composition operators induced by parabolic non-automorphism self-maps of $\mathbb{D}$  fixing $\gamma$. If $\varphi$ is a parabolic non-automorphism self-map of $\mathbb{D}$, then $C_\varphi$ is irreducible \cite{mps}, and $C_\varphi^* C_\varphi-C_\varphi C_\varphi^*$ is a non-zero compact operator ($C_\varphi$ is essentially normal) \cite{blns03}. Therefore  the unital $C^*$-algebra $C^*(\mathbb{P}_\gamma)$ is irreducible and $C^*(\mathbb{P}_\gamma)\cap \mathfrak K\neq \{0\}$. By Theorem 2.4.9 in \cite {mu90}, $C^*(\mathbb{P}_\gamma)$ contains all compact operators on $H^2$. Since the elements of $\mathbb{P}_\gamma$ satisfy $\rho_{\gamma,a}\circ \rho_{\gamma,b}=\rho_{\gamma,a+b}$, $C^*(\mathbb{P}_\gamma)/\mathfrak K$ is a unital commutative $C^*$-algebra. The following theorem completely describes this $C^*$-algebra.
\begin{theorem} \cite[Theorem 3 and Corollary 2]{kmm10} \label {3,2}
There is a unique $*$-isomorphism $\Sigma : C([0,1])\rightarrow C^*(\mathbb{P}_\gamma)/ \mathfrak{K}$ such that $\Sigma (x^a)=[C_{\rho _{\gamma , a}}]$ for $Rea>0$. Moreover if $\rho$ is a parabolic non-automorphism self-map of $\mathbb{D}$ fixing $\gamma$, then
$C^*(C_\rho) = C^*(\mathbb{P}_\gamma)$.
\end{theorem}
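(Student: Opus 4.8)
The plan is to realise $A:=C^*(\mathbb{P}_\gamma)/\mathfrak K$ as $C([0,1])$ through Gelfand theory. Since $A$ is unital and commutative, $A\cong C(X)$ with $X$ its character space, and the task is to show $X$ is homeomorphic to $[0,1]$ in such a way that the character corresponding to $t$ sends $[C_{\rho_{\gamma,a}}]$ to $t^a$. Write $q:B(H^2)\to Q:=B(H^2)/\mathfrak K$ for the quotient map. Each $\chi\in X$ determines a function $g_\chi(a):=\chi([C_{\rho_{\gamma,a}}])$ on the open right half-plane $\{\mathrm{Re}\,a>0\}$, and since $C_\varphi C_\psi=C_{\psi\circ\varphi}$ the semigroup law $\rho_{\gamma,a}\circ\rho_{\gamma,b}=\rho_{\gamma,a+b}$ gives the operator identity $C_{\rho_{\gamma,a}}C_{\rho_{\gamma,b}}=C_{\rho_{\gamma,a+b}}$, so that $g_\chi$ is multiplicative: $g_\chi(a+b)=g_\chi(a)\,g_\chi(b)$. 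First I would check that $g_\chi$ is holomorphic. For fixed $f,h\in H^2$ the map $a\mapsto\langle C_{\rho_{\gamma,a}}f,h\rangle$ is holomorphic, and on compact subsets of $\{\mathrm{Re}\,a>0\}$ the point $\rho_{\gamma,a}(0)$ stays in a compact subset of $\mathbb{D}$, so $\|C_{\rho_{\gamma,a}}\|$ is locally bounded; hence $a\mapsto C_{\rho_{\gamma,a}}$ is holomorphic in operator norm, and composing with the bounded functional $\chi\circ q$ makes $g_\chi$ holomorphic. A holomorphic multiplicative function is either $\equiv 0$ or of the form $g_\chi(a)=e^{\lambda a}$ for a single $\lambda\in\mathbb{C}$.

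The key analytic input, and the step I expect to be the main obstacle, is to compute for every $a$ with $\mathrm{Re}\,a>0$ the essential spectrum and essential norm of $C_{\rho_{\gamma,a}}$, namely
\[
\sigma_{\mathrm{ess}}(C_{\rho_{\gamma,a}})=\{e^{-a\xi}:\xi\ge 0\}\cup\{0\}=\{s^a:s\in[0,1]\},\qquad \|C_{\rho_{\gamma,a}}\|_{\mathrm{ess}}=1.
\]
I would obtain this by conjugating $\rho_{\gamma,a}$ via $\alpha$ to the translation $w\mapsto w+a$ on $\Omega$, transporting $C_{\rho_{\gamma,a}}$ through the induced unitary (weighted–composition) identification of $H^2(\mathbb{D})$ with the Hardy space of $\Omega$ and the Paley–Wiener identification of the latter with $L^2(0,\infty)$, under which the operator becomes multiplication by $\xi\mapsto e^{-a\xi}$ modulo a compact perturbation; the spectrum of that multiplication operator is exactly the closed spiral above, and its essential norm is $\sup_{\xi>0}e^{-\mathrm{Re}(a)\xi}=1$. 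The delicate point is controlling the compact error in this transference, i.e. verifying that the weighted–composition conjugacy differs from the clean multiplication model only by an element of $\mathfrak K$ (the weight must become asymptotically constant at the relevant boundary point).

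Granting this, I finish as follows. By spectral permanence for the $C^*$-subalgebra $A\subseteq Q$ we have $\sigma_A([C_{\rho_{\gamma,a}}])=\{s^a:s\in[0,1]\}$, and since $A$ is commutative every element is normal, so $\|[C_{\rho_{\gamma,a}}]\|=1$ and hence $|g_\chi(a)|=|e^{\lambda a}|\le 1$ for all $a$ with $\mathrm{Re}\,a>0$. Letting $\mathrm{Im}\,a\to\pm\infty$ with $\mathrm{Re}\,a$ fixed forces $\lambda\in\mathbb{R}$, and $\mathrm{Re}\,a\to\infty$ then forces $\lambda\le 0$; the identically-zero solution (a holomorphic function vanishing on an open set vanishes everywhere) is the remaining case. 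Thus every character satisfies $g_\chi(a)=t^a$ with $t:=e^{\lambda}\in[0,1]$, where reality of $\lambda$ removes the ambiguity in $e^{\lambda a}=e^{(\ln t)a}$. Conversely, the range of the Gelfand transform of a single $[C_{\rho_{\gamma,a_0}}]$ is the full spiral $\{s^{a_0}:s\in[0,1]\}$, so every $t\in[0,1]$ is attained by some character. Therefore $\chi\mapsto t$ is a continuous bijection $X\to[0,1]$ of compact Hausdorff spaces, hence a homeomorphism, and transporting the Gelfand isomorphism yields the $*$-isomorphism $\Sigma$ with $\Sigma(x^a)=[C_{\rho_{\gamma,a}}]$; uniqueness is immediate because the functions $x^a$ generate $C([0,1])$.

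For the final assertion, let $\rho=\rho_{\gamma,a_0}$ be any parabolic non-automorphism fixing $\gamma$. Being itself irreducible, essentially normal and non-normal, $C_\rho$ generates a $C^*$-algebra containing $\mathfrak K$ by the same Theorem 2.4.9 argument, and $C^*(C_\rho)/\mathfrak K=C^*([C_\rho])$. By the essential-spectrum computation $[C_\rho]$ is a normal element whose spectrum is the whole spiral, which is homeomorphic to $[0,1]$ via $s\mapsto s^{a_0}$; since $[C_\rho]=\Sigma(x^{a_0})$ and $x^{a_0}$ separates the points of $[0,1]$, Stone–Weierstrass gives $C^*([C_\rho])=A$. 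As $C^*(C_\rho)\subseteq C^*(\mathbb{P}_\gamma)$, both contain $\mathfrak K$, and their images in $Q$ coincide, lifting through $\mathfrak K$ gives $C^*(C_\rho)=C^*(\mathbb{P}_\gamma)$.
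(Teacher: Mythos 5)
This statement is imported verbatim from \cite[Theorem 3 and Corollary 2]{kmm10}; the paper you were given contains no proof of it, so there is nothing internal to compare against. Judged on its own, your argument is structurally sound and the ``soft'' $C^*$-algebraic part is correct and complete: the local boundedness of $\|C_{\rho_{\gamma,a}}\|$ (since $\rho_{\gamma,a}(0)=\gamma a/(a+2)$ stays in a compact subset of $\mathbb{D}$) does upgrade weak holomorphy to norm holomorphy, the classification of nonzero holomorphic solutions of $g(a+b)=g(a)g(b)$ as $e^{\lambda a}$ is right, spectral permanence and the boundedness constraint correctly force $\lambda\in(-\infty,0]$ or $g\equiv 0$, injectivity of $s\mapsto s^{a_0}$ on $[0,1]$ gives surjectivity of $\chi\mapsto t$, and the Stone--Weierstrass argument for $C^*(C_\rho)=C^*(\mathbb{P}_\gamma)$ (via irreducibility and the nonzero compact self-commutator, so both algebras contain $\mathfrak K$ and have the same image in the Calkin algebra) is fine. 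The one genuine gap is the step you yourself flag and then assume: that under the Cayley and Paley--Wiener identifications $C_{\rho_{\gamma,a}}$ agrees with multiplication by $\xi\mapsto e^{-a\xi}$ on $L^2(0,\infty)$ up to a compact error, whence $\sigma_{\mathrm{ess}}(C_{\rho_{\gamma,a}})=\{s^a:s\in[0,1]\}$ and $\|C_{\rho_{\gamma,a}}\|_{\mathrm{ess}}=1$. That transference (showing the weight in the induced weighted composition operator tends to a constant at the distinguished boundary point, so the discrepancy is compact) is the entire analytic content of the theorem and is precisely what \cite{blns03} and \cite{kmm10} establish; without it your character analysis has nothing to bite on. So your write-up is best read as a correct Gelfand-theoretic repackaging of the cited proof, conditional on that unproved lemma, rather than an independent proof.
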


For a linear-fractional self-map $\varphi$ of $\mathbb{D}$,
$$U_\varphi=C_\varphi(C_\varphi^* C_\varphi)^{-1/2}$$
is the partial isometry in the polar decomposition of $C_\varphi$. Since both  $C_\varphi$ and $C_\varphi^*$ are injective, $U_\varphi$ is a unitary operator.
The following  results are useful in finding the $C^*$-algebra generated by Toeplitz operators and composition operators induced by linear-fractional self-maps
of $\mathbb{D}$.
\begin{theorem}\cite [section 4]{kmm07} \label{3.1}
Suppose that $\varphi$ is a linear-fractional non-automorphism self-map
of $\mathbb{D}$  sending $\gamma\in\ \mathbb{T}$ to $\eta\in\
\mathbb{T}$. For every $f\in C(\mathbb{T})$ there exists compact
operators $k$ and $ k^{'}$ such that

$$T_{f} C_{\varphi}= f(\gamma)C_{\varphi}+k,~~~C_{\varphi} T_{f}=f(\eta)C_\varphi +k^{'}.$$

\end{theorem}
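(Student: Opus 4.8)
The plan is to strip off boundary values and reduce both identities to a single compactness statement. Writing $f=f(\gamma)\mathbf 1+(f-f(\gamma))$ and using $T_{\mathbf 1}=I$ gives $T_fC_\varphi=f(\gamma)C_\varphi+T_{f-f(\gamma)}C_\varphi$, and symmetrically $C_\varphi T_f=f(\eta)C_\varphi+C_\varphi T_{f-f(\eta)}$. Hence it suffices to prove the two compactness statements: (B) if $f\in C(\mathbb T)$ and $f(\gamma)=0$ then $T_fC_\varphi\in\mathfrak K$, and (A) if $f\in C(\mathbb T)$ and $f(\eta)=0$ then $C_\varphi T_f\in\mathfrak K$.

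To handle these I would exploit Coburn's theorem, recalled in the introduction, which says precisely that $f\mapsto[T_f]$ is a $*$-homomorphism of $C(\mathbb T)$ into the Calkin algebra; in particular $T_{fg}\equiv T_fT_g\equiv T_gT_f$ modulo $\mathfrak K$ for continuous $f,g$. Set $\mathcal J:=\{f\in C(\mathbb T):T_fC_\varphi\in\mathfrak K\}$. This is norm closed, since $\|T_fC_\varphi\|\le\|f\|_\infty\|C_\varphi\|$ and $\mathfrak K$ is closed, and it is an ideal of $C(\mathbb T)$: for $g\in C(\mathbb T)$ and $f\in\mathcal J$ one has $T_{gf}C_\varphi\equiv T_g(T_fC_\varphi)\equiv0$ modulo $\mathfrak K$. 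Likewise $\mathcal J':=\{f:C_\varphi T_f\in\mathfrak K\}$ is a closed ideal, via $C_\varphi T_{gf}\equiv(C_\varphi T_f)T_g$. Now the closed ideals of $C(\mathbb T)$ are exactly the functions vanishing on a fixed closed set, and the closed ideal generated by $z-\gamma$ is $\{f:f(\gamma)=0\}$. Thus claim (B) follows once $z-\gamma\in\mathcal J$, and claim (A) follows once $z-\eta\in\mathcal J'$.

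This reduces everything to one compactness lemma. Since $z-\gamma$ and $z-\eta$ are analytic we have $T_{z-\gamma}=M_{z-\gamma}$ and $T_{z-\eta}=M_{z-\eta}$, and the intertwining relation $C_\varphi M_g=M_{g\circ\varphi}C_\varphi$ for $g\in H^\infty$ turns the second generator into $C_\varphi T_{z-\eta}=M_{\varphi-\eta}C_\varphi$. Both cases are therefore instances of the following \emph{Core Lemma}: if $u$ is analytic on $\mathbb D$, continuous on $\overline{\mathbb D}$ and $u(\gamma)=0$, then $M_uC_\varphi\in\mathfrak K$. Indeed $u=z-\gamma$ in the first case and $u=\varphi-\eta$ in the second, both continuous up to $\overline{\mathbb D}$ (the pole of the self-map $\varphi$ lies outside $\overline{\mathbb D}$) and both vanishing at $\gamma$, since $\varphi(\gamma)=\eta$.

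To prove the Core Lemma I would test $M_uC_\varphi$ on a bounded sequence $f_n$ in $H^2$ converging to $0$ uniformly on compact subsets of $\mathbb D$ and show $\|M_uC_\varphi f_n\|\to0$. Passing to the circle,
$$\|M_uC_\varphi f_n\|^2=\int_{\mathbb T}|u(e^{i\theta})|^2\,|f_n(\varphi(e^{i\theta}))|^2\,dm(\theta)=\int_{\overline{\mathbb D}}|f_n|^2\,d\mu_u,$$
where $\mu_u$ is the push-forward of $|u|^2\,dm$ under the boundary map $\varphi$. For a non-automorphism $\varphi$ the curve $\varphi(\mathbb T)$ is a circle internally tangent to $\mathbb T$ only at $\eta$, so $\mu_u$ approaches $\mathbb T$ only near $\eta$; and at that point of tangency $e^{i\theta}\to\gamma$, where the weight $|u(e^{i\theta})|^2\to|u(\gamma)|^2=0$. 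The main obstacle is to convert this vanishing into the statement that $\mu_u$ is a \emph{vanishing} Carleson measure, i.e. $\mu_u(S(\zeta,h))/h\to0$ uniformly as $h\to0$: the only Carleson boxes that matter sit at $\eta$, and one must weigh the zero of $u$ at $\gamma$ against the angular-derivative-controlled rate at which $\varphi(\mathbb T)$ approaches $\mathbb T$. Once $\mu_u$ is seen to be a vanishing Carleson measure, $\int|f_n|^2\,d\mu_u\to0$ for weakly null bounded $f_n$ and compactness follows. An alternative, following Kriete--MacCluer--Moorhouse, is to invoke Cowen's adjoint formula $C_\varphi^*=T_gC_\sigma T_h^*$ for linear-fractional $\varphi$, with $\sigma$ the associated self-map sending $\eta$ back to $\gamma$; this places $(M_uC_\varphi)^*=C_\varphi^*T_{\bar u}$ inside the Toeplitz--composition algebra, and combined with Coburn's relations $T_fT_g\equiv T_{fg}$ and $T_f^*\equiv T_{\bar f}$ the hypothesis $u(\gamma)=0$ resurfaces as a symbol vanishing at $\eta$ under $\sigma$, isolating the same single hard point. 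Either route yields the Core Lemma, after which the ideal-theoretic reduction delivers both displayed identities at once.
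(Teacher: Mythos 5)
This theorem is quoted from \cite[Section 4]{kmm07} and the paper supplies no proof of it, so there is nothing internal to compare your argument against; judged on its own, your proposal is a correct and essentially complete proof. The reduction is sound: $\{f\in C(\mathbb{T}):T_fC_\varphi\in\mathfrak K\}$ and $\{f:C_\varphi T_f\in\mathfrak K\}$ are closed ideals because the semicommutators $T_{fg}-T_fT_g$ are compact for continuous symbols; the closed ideal of $C(\mathbb{T})$ generated by $z-\gamma$ (resp.\ $z-\eta$) is exactly the set of functions vanishing at $\gamma$ (resp.\ $\eta$); and since $T_g=M_g$ for $g\in H^\infty$ together with $C_\varphi M_g=M_{g\circ\varphi}C_\varphi$, both generators reduce to your Core Lemma with $u=z-\gamma$ and $u=\varphi-\eta$, each continuous on $\overline{\mathbb{D}}$ and vanishing at $\gamma$. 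The one step you leave as a sketch --- that $\mu_u=\varphi_*\left(|u|^2\,dm\right)$ is a vanishing Carleson measure --- does close, and more simply than your wording suggests: because $|\varphi'|$ is bounded below on $\mathbb{T}$, the $\varphi$-preimage of a Carleson box of side $h$ is an arc of length $O(h)$, and because $\varphi(\mathbb{T})$ meets $\mathbb{T}$ only at $\eta=\varphi(\gamma)$, any arc meeting $\{\zeta\in\mathbb{T}:1-|\varphi(\zeta)|\le h\}$ lies in a neighborhood of $\gamma$ shrinking to $\{\gamma\}$ as $h\to0$; hence $\mu_u(S(I))/|I|\lesssim\sup\{|u(\zeta)|^2:1-|\varphi(\zeta)|\le h\}\to|u(\gamma)|^2=0$, with no need to quantify the quadratic tangency. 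You should also record that $\mu_u(\{\eta\})=0$, so $\mu_u$ is genuinely carried by $\mathbb{D}$ and the vanishing-Carleson criterion for compactness of the embedding $H^2\hookrightarrow L^2(\mu_u)$ applies. Your ideal-theoretic localization is a clean algebraic substitute for the more hands-on symbol decomposition (a small piece supported near $\gamma$ plus a piece vanishing near $\gamma$, the latter giving a compact localized operator) that one typically finds in the source; both routes funnel into the same single analytic fact at the point of contact $\gamma\mapsto\eta$.
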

\begin{theorem} \cite{Ju07-1} \label {unitary}
Let $\varphi_1,\varphi_2$ are automorphisms of $\mathbb{D}$ and $f \in C(\mathbb{T})$. If $U_{\varphi_1}, U_{\varphi_2}$
are unitary parts of the polar decomposition of $C_{\varphi_1}, C_{\varphi_2}$, respectively, then the operators
$U_{\varphi_1}U_{\varphi_2}-U_{\varphi_2 o \varphi_1 }$ and $U_{\varphi_1}T_f U_{\varphi_1}^*-T_{f o \varphi_1 }$ are compact.
\end{theorem}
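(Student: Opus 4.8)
The plan is to carry out every computation in the Calkin algebra $B(H^2)/\mathfrak{K}$, writing $[\,\cdot\,]$ for cosets and $\equiv$ for equality modulo $\mathfrak{K}$. Two standing facts will be used repeatedly: Hankel operators with continuous symbol are compact (Hartman), so that $T_fT_g\equiv T_{fg}$ and $[T_f,T_g]\equiv0$ for $f,g\in C(\mathbb{T})$, together with Coburn's isomorphism $C(\mathbb{T})\cong C^*(T_z)/\mathfrak{K}$; and for an automorphism $\varphi$ the operator $C_\varphi$ is invertible with $C_\varphi^{-1}=C_{\varphi^{-1}}$, while $C_{\varphi_1}C_{\varphi_2}=C_{\varphi_2\circ\varphi_1}$ holds exactly. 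The first genuine step is the covariance relation
$$C_\varphi T_f \equiv T_{f\circ\varphi}\,C_\varphi, \qquad f\in C(\mathbb{T}),$$
for each automorphism $\varphi$. Let $P$ be the Riesz projection of $L^2(\mathbb{T})$ onto $H^2$, let $M_f$ be multiplication by $f$, and let $V_\varphi g=g\circ\varphi$ be the full composition operator on $L^2(\mathbb{T})$; since the boundary map of $\varphi$ is a smooth diffeomorphism of $\mathbb{T}$, $V_\varphi$ is bounded and invertible with $V_\varphi M_f=M_{f\circ\varphi}V_\varphi$. As $V_\varphi$ preserves $H^2$ one has $C_\varphi=V_\varphi P$ and $(I-P)V_\varphi P=0$, and a direct computation on $H^2$ gives $T_{f\circ\varphi}C_\varphi-C_\varphi T_f=P\,V_\varphi\,(I-P)M_f P=P V_\varphi H_f$, where $H_f=(I-P)M_fP$ is the Hankel operator with symbol $f$, compact for continuous $f$. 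Taking adjoints yields the companion relation $T_f C_\varphi^*\equiv C_\varphi^* T_{f\circ\varphi}$.

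The crux is to show that the modulus $|C_\varphi|=(C_\varphi^*C_\varphi)^{1/2}$ is a Toeplitz operator with continuous, strictly positive symbol modulo $\mathfrak{K}$. Applying the Cowen adjoint formula for linear-fractional symbols, $C_\varphi^*=T_g C_{\varphi^{-1}} T_{\bar h}$ with $g,h$ analytic and continuous on $\mathbb{T}$, and combining it with the covariance relation and $C_{\varphi^{-1}}C_\varphi=I$, one computes $C_\varphi^*C_\varphi\equiv T_g\,T_{\bar h\circ\varphi^{-1}}\equiv T_\omega$ for $\omega=g\,(\bar h\circ\varphi^{-1})\in C(\mathbb{T})$. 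Since $C_\varphi$ is invertible, $C_\varphi^*C_\varphi$ is positive and invertible, so its essential spectrum $\omega(\mathbb{T})$ lies in $(0,\infty)$; hence $\omega>0$. Because $T\mapsto[T]$ is a $*$-homomorphism restricting to Coburn's isomorphism, continuous functional calculus gives $|C_\varphi|\equiv T_{w_\varphi}$ with $w_\varphi=\sqrt{\omega}\in C(\mathbb{T})$, $w_\varphi>0$, and moreover $[T_{w_\varphi}]^{-1}=[T_{1/w_\varphi}]$, i.e. $|C_\varphi|^{-1}\equiv T_{1/w_\varphi}$. Feeding $C_\varphi^*C_\varphi\equiv T_\omega$ through the covariance relation once more gives $C_\varphi C_\varphi^*\equiv C_\varphi T_\omega C_{\varphi^{-1}}\equiv T_{\omega\circ\varphi}=T_{w_\varphi^2\circ\varphi}$.

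With these structural facts the two assertions follow formally. For the second, write $U_{\varphi_1}=C_{\varphi_1}|C_{\varphi_1}|^{-1}$ and $U_{\varphi_1}^*=|C_{\varphi_1}|^{-1}C_{\varphi_1}^*$; then, using $|C_{\varphi_1}|^{-1}\equiv T_{1/w_{\varphi_1}}$, the covariance relation, and $C_{\varphi_1}C_{\varphi_1}^*\equiv T_{w_{\varphi_1}^2\circ\varphi_1}$,
$$U_{\varphi_1}T_f U_{\varphi_1}^* \equiv C_{\varphi_1}\,T_{f/w_{\varphi_1}^2}\,C_{\varphi_1}^* \equiv T_{(f/w_{\varphi_1}^2)\circ\varphi_1}\,C_{\varphi_1}C_{\varphi_1}^* \equiv T_{f\circ\varphi_1},$$
since $\big((f/w_{\varphi_1}^2)\circ\varphi_1\big)\big(w_{\varphi_1}^2\circ\varphi_1\big)=f\circ\varphi_1$. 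For the first assertion I would argue by uniqueness of polar decomposition in the Calkin algebra. As $\varphi_2\circ\varphi_1$ is an automorphism, $[C_{\varphi_2\circ\varphi_1}]$ is invertible with polar form $[U_{\varphi_2\circ\varphi_1}]\,[T_{w_{\varphi_2\circ\varphi_1}}]$. On the other hand $[C_{\varphi_2\circ\varphi_1}]=[C_{\varphi_1}][C_{\varphi_2}]=[U_{\varphi_1}][T_{w_{\varphi_1}}][U_{\varphi_2}][T_{w_{\varphi_2}}]$, and moving $[T_{w_{\varphi_1}}]$ past $[U_{\varphi_2}]$ via the covariance relation just proved rewrites this as $[U_{\varphi_1}U_{\varphi_2}]\,[T_{(w_{\varphi_1}\circ\varphi_2^{-1})w_{\varphi_2}}]$, again a unitary times a positive element. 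Uniqueness of the polar decomposition of the invertible element $[C_{\varphi_2\circ\varphi_1}]$ then forces $[U_{\varphi_1}U_{\varphi_2}]=[U_{\varphi_2\circ\varphi_1}]$, which is the claim.

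I expect the main obstacle to be the middle step, namely proving $|C_\varphi|\equiv T_{w_\varphi}$ with $w_\varphi$ continuous and strictly positive. This is where the linear-fractional hypothesis is indispensable, through the Cowen adjoint formula, and where one must track positivity and invertibility carefully in order to license both the square-root functional calculus and the passage $[T_{w_\varphi}]^{-1}=[T_{1/w_\varphi}]$. By contrast, the covariance relation is a clean consequence of the compactness of Hankel operators with continuous symbol, and the final polar-decomposition argument is comparatively soft once the modulus has been identified.
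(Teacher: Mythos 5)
The paper does not prove this statement; it is quoted verbatim from Jury \cite{Ju07-1}, so there is no internal proof to compare against. Your argument is correct and complete as written: the covariance relation $C_\varphi T_f\equiv T_{f\circ\varphi}C_\varphi$ via compactness of the Hankel operator $H_f=(I-P)M_fP$ is sound, the identification $C_\varphi^*C_\varphi\equiv T_\omega$ with $\omega\in C(\mathbb{T})$, $\omega>0$ via the Cowen adjoint formula and Coburn's essential-spectrum theorem is the genuinely substantive step and you handle the positivity, the square root, and the inverse correctly (spectral permanence lets you compute the functional calculus inside $C^*(T_z)/\mathfrak K\cong C(\mathbb{T})$), and the two assertions then follow as you say. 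Your route differs from Jury's published proof mainly in how the modulus is identified: Jury uses the explicit weighted-composition unitaries built from $f(z)=(1-\bar a z)/(1-|a|^2)^{1/2}$ --- precisely the lemma this paper quotes as Lemma \ref{Jury lemma} (where ``$T_zT_z^*$'' is a typo for $T_fT_f^*$) --- to get $C_\varphi^*C_\varphi=(T_fT_f^*)^{-1}=T_{1/f}^*T_{1/f}\equiv T_{1/|f|^2}$ with no appeal to Cowen's formula, and then verifies the group law by a cocycle computation with these explicit weights rather than by uniqueness of polar decomposition of an invertible element in the Calkin algebra. Your version is more abstract but buys a cleaner endgame; Jury's is more elementary in its inputs. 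Either way the proof stands; the only cosmetic caution is that when you ``move $[T_{w_{\varphi_1}}]$ past $[U_{\varphi_2}]$'' you are invoking the second assertion for $\varphi_2$, which is legitimate since you prove it for every automorphism before touching the first assertion, but it is worth flagging the order of the two parts explicitly.
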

For $\gamma \in \mathbb{T}$ and non-negative real number $t$, following \cite{Qu13}, we consider the automorphism $\Psi_{\gamma,t}$ of $\mathbb{D}$ defined by
$$\Psi_{\gamma,t}(z)=\frac{(t+1)z+(1-t)}{(1-t)\bar{\gamma}z+(1+t)},$$
which fixes $\gamma$, and satisfies $\Psi_{\gamma,t}^{'}(z)=t$. The set $\{\Psi_{\gamma,t}:t>0\}$ is an abelian group as $\Psi_{\gamma,t_1}\circ \Psi_{\gamma,t_2}=\Psi_{\gamma,t_1t_2}$ for all $t_1,t_2>0$.

If $\varphi, \varphi _1 , \cdots ,\varphi _n$ are linear-fractional non-automorphism self-maps of $\mathbb{D}$ fixing $\gamma \in \mathbb{T}$, by equation (4.6) in \cite {Qu13}
\begin{equation} \label {a0}
 C^*(C_{ \varphi _ 1 },...,C_{ \varphi _ 1 }, \mathfrak K)=
 \end{equation}
 $$C^*(\{C_{\rho _{\gamma , a}} U_ {\Psi _{\gamma ,\varphi ^{'}_1(\gamma)^{m_1} ... \varphi ^{'}_n(\gamma)^{m_n}}}: Rea>0 ,(m_1,...,m_n) \in \mathbb{Z}^n \}).$$
In particular

\begin{equation} \label {a1}
 C^*(C_\varphi ,\mathfrak K)= C^*(\{C_{\rho _{\gamma , a}} U_ {\Psi _{\gamma ,\varphi ^{'}(\gamma)^n}}: Rea>0 ,n \in \mathbb{Z} \}).
\end{equation}

\begin{theorem} \cite[Theorem 4.4]{Qu13} \label {ca}
Let $G$ be a collection of automorphisms of $\mathbb{D}$ that fix $\gamma\in\mathbb{T}$. If $G$ is an abelian group and $\eta^{'}(\gamma)\neq1$ for all $\eta\in G\setminus \{id\}$ then $C^*(\{[C_{\rho _{\gamma , a}} U_ \eta]: Rea>0 ,\eta \in G \})$ is $*$-isomorphic to the minimal unitization of $C_0([0,1])\rtimes _\alpha G_d$ where the action $\alpha : G_d \rightarrow Aut(C_0([0,1]))$ is defined by $\alpha_\eta(f)(x)= f(x^{\eta^{'}(\gamma)})$ for $\eta \in G$, $f \in C_0([0,1])$, and $x \in [0,1]$.
\end{theorem}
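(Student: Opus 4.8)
My plan is to identify the given algebra with the minimal unitization of a crossed product by producing an explicit covariant representation and then proving that the associated homomorphism is an isomorphism. I begin by pinning down the coefficient algebra: by Theorem~\ref{3,2} the $*$-isomorphism $\Sigma\colon C([0,1])\to C^*(\mathbb P_\gamma)/\mathfrak K$ sends $x^a$ to $[C_{\rho_{\gamma,a}}]$, so the $C^*$-subalgebra $\mathcal A$ generated by $\{[C_{\rho_{\gamma,a}}]:\operatorname{Re}a>0\}$ is the image under $\Sigma$ of the algebra generated by the functions $x^a$. These functions vanish at $0$, are nowhere zero on $(0,1]$, and separate its points, so by Stone--Weierstrass they generate $C_0([0,1])$; hence $\Sigma$ restricts to an isomorphism of $C_0([0,1])$ onto $\mathcal A$. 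Next, Theorem~\ref{unitary} gives $[U_{\eta_1}][U_{\eta_2}]=[U_{\eta_2\circ\eta_1}]=[U_{\eta_1\eta_2}]$ (using that $G$ is abelian), together with $[U_{\mathrm{id}}]=[I]$ and $[U_\eta]^*=[U_{\eta^{-1}}]$, so $\eta\mapsto u_\eta:=[U_\eta]$ is a unitary representation of the discrete group $G_d$ in the Calkin algebra.

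The key computation is the covariance relation
\begin{equation}\label{cov}
u_\eta\,\Sigma(f)\,u_\eta^*=\Sigma(\alpha_\eta f),\qquad f\in C_0([0,1]),\ \eta\in G,
\end{equation}
which it suffices to verify on the generators $f=x^a$, where it reads $[U_\eta][C_{\rho_{\gamma,a}}][U_\eta]^*=[C_{\rho_{\gamma,\,a\eta'(\gamma)}}]$ since $\alpha_\eta(x^a)=x^{a\eta'(\gamma)}$. I would first work in the right half-plane model: the Cayley map $\alpha(z)=(\gamma+z)/(\gamma-z)$ conjugates $\rho_{\gamma,a}$ to the translation $w\mapsto w+a$ and the automorphism $\eta$ to the dilation $w\mapsto w/\eta'(\gamma)$ (the multiplier of $\eta$ at its fixed point $\gamma$, which $\alpha$ carries to $\infty$, is $\eta'(\gamma)$). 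Using $C_\varphi C_\psi=C_{\psi\circ\varphi}$ this yields, exactly, $C_\eta C_{\rho_{\gamma,a}}C_{\eta^{-1}}=C_{\eta^{-1}\circ\rho_{\gamma,a}\circ\eta}=C_{\rho_{\gamma,\,a\eta'(\gamma)}}$. It then remains to replace $C_\eta$ by its unitary part $U_\eta$ modulo $\mathfrak K$: writing $C_\eta=U_\eta|C_\eta|$, the positive factor $(C_\eta^*C_\eta)^{1/2}$ is, modulo $\mathfrak K$, a Toeplitz operator with continuous symbol, and since $\rho_{\gamma,a}$ fixes $\gamma$, Theorem~\ref{3.1} shows it commutes with $[C_{\rho_{\gamma,a}}]$ (both products being the same scalar multiple of $[C_{\rho_{\gamma,a}}]$). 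Hence conjugation by $[U_\eta]$ and by $[C_\eta]$ agree on $\mathcal A$, which gives \eqref{cov}.

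With \eqref{cov} in hand, $(\Sigma,u)$ is a covariant representation of the $C^*$-dynamical system $(C_0([0,1]),G_d,\alpha)$ in the Calkin algebra, so the universal property of the full crossed product furnishes a $*$-homomorphism $\pi\colon C_0([0,1])\rtimes_\alpha G_d\to B(H^2)/\mathfrak K$ with $\pi(f u_\eta)=\Sigma(f)[U_\eta]$. A short computation with the covariance relation shows that its range is the closed linear span $B$ of the products $[C_{\rho_{\gamma,a}}U_\eta]$, i.e. the (non-unital) $C^*$-algebra generated by the given operators. It therefore remains to prove that $\pi$ is injective.

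Injectivity of $\pi$ is the main obstacle, and it is precisely here that the hypothesis $\eta'(\gamma)\neq1$ for $\eta\neq\mathrm{id}$ is indispensable. The induced action of $G_d$ on the spectrum $(0,1]$ of $C_0([0,1])$ is $x\mapsto x^{\eta'(\gamma)}$, and for $\eta\neq\mathrm{id}$ its fixed-point set $\{x\in(0,1]:x^{\eta'(\gamma)}=x\}=\{1\}$ is nowhere dense exactly because $\eta'(\gamma)\neq1$; thus the action is topologically free. Since $G_d$ is discrete and abelian, hence amenable, the full and reduced crossed products coincide, and the uniqueness theorem for crossed products by topologically free actions of discrete groups applies: any representation faithful on the coefficient algebra $C_0([0,1])$ is faithful. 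As $\pi|_{C_0([0,1])}=\Sigma$ is injective, $\pi$ is a $*$-isomorphism of $C_0([0,1])\rtimes_\alpha G_d$ onto $B$. Because the crossed product is non-unital we have $[I]\notin B$, so the unital algebra $C^*(\{[C_{\rho_{\gamma,a}}U_\eta]\})=B+\mathbb C[I]$ is its minimal unitization, and extending $\pi$ to the unitizations gives the asserted isomorphism. Besides injectivity, the delicate technical point is the passage from $C_\eta$ to $U_\eta$ in \eqref{cov}; the topological-freeness verification is what makes the derivative hypothesis essential.
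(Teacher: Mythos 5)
The paper does not actually prove this statement---it is imported verbatim from Quertermous \cite[Theorem 4.4]{Qu13} and used as a black box---so there is no internal proof to compare against. Your reconstruction (the covariant pair $(\Sigma,[U_\eta])$ via the conjugation identity $\eta^{-1}\circ\rho_{\gamma,a}\circ\eta=\rho_{\gamma,\,a\eta'(\gamma)}$, the reduction from $C_\eta$ to $U_\eta$ through the invertible Toeplitz factor, and injectivity from amenability of $G_d$ together with topological freeness of $x\mapsto x^{\eta'(\gamma)}$ on $(0,1]$) is correct and is essentially the argument of the cited source.
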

By (\ref {a0}), (\ref {a1}) and Theorem \ref{ca}, the $C^*$-algebras $C^*(C_{\varphi_1},...,C_{\varphi_n},\mathfrak K)/\mathfrak K$ and $C^*(C_\varphi,\mathfrak K)/\mathfrak  K$ are determined as follows.
\begin{corollary} \cite[Theorems 4.6 and 4.7]{Qu13} \label {coro13}
Let $\varphi, \varphi _1 , \cdots ,\varphi _n$ are linear-fractional
non-automorphism self-maps of $\mathbb{D}$ that fix $\gamma \in
\mathbb{T}$, $\varphi^{'}(\gamma)\neq 1$ and $\ln \varphi ^{'}_1,...,\ln \varphi ^{'}_n$ are
linearly independent over  $\mathbb{Z}$. Define the actions $\alpha
: \mathbb{Z} \rightarrow Aut(C_0([0,1]))$ and $\alpha^{'} :
\mathbb{Z}^n \rightarrow Aut(C_0([0,1]))$ by
$\alpha_n(f)(x)=f(x^{\varphi^{'}(\gamma)^n})$ and $\alpha^{'}
_{(m_1,...,m_n)}(f)(x)$ $=f(x^{\varphi^{'}(\gamma)^{m_1}...\varphi^{'}(\gamma)^{m_n}})$,
respectively, for $f \in C_0([0,1])$, $n \in \mathbb{Z}$,
$(m_1,...,m_n) \in \mathbb{Z}^n$ and $x \in [0,1]$. Then
$C^{*}(C_\varphi,\mathfrak K)/\mathfrak K$ and
$C^{*}(C_{\varphi_1},...,C_{\varphi_n},\mathfrak K)/\mathfrak K$ are $*$-isomorphic to the minimal unitizations of $C_0([0,1])\rtimes_\alpha
\mathbb{Z}$ and $C_0([0,1])\rtimes_{\alpha^{'}} \mathbb{Z}^n$,
respectively.
\end{corollary}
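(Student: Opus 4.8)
The plan is to obtain both isomorphisms as direct applications of Theorem~\ref{ca}: in each case I choose an abelian group $G$ of automorphisms fixing $\gamma$, verify the two hypotheses of that theorem, invoke it to identify the quotient with a minimal unitization of a crossed product by $G_d$, and finally transport the crossed-product action across an explicit isomorphism of $G_d$ with $\mathbb{Z}$ (respectively $\mathbb{Z}^n$). Since the derivative of a linear-fractional self-map at a boundary fixed point is a positive real number, each $\ln \varphi'_i(\gamma)$ is a well-defined real number, which is what makes the exponent arithmetic below legitimate.

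First I would treat $C^{*}(C_\varphi,\mathfrak K)/\mathfrak K$. Set $G=\{\Psi_{\gamma,\varphi'(\gamma)^n}:n\in\mathbb{Z}\}$. Because $\Psi_{\gamma,t_1}\circ\Psi_{\gamma,t_2}=\Psi_{\gamma,t_1t_2}$, the assignment $n\mapsto\Psi_{\gamma,\varphi'(\gamma)^n}$ is a homomorphism of $\mathbb{Z}$ onto $G$, and the hypothesis $\varphi'(\gamma)\neq1$ (with $\varphi'(\gamma)>0$) forces $\varphi'(\gamma)^n\neq1$ for every $n\neq0$, so this homomorphism is injective and $G_d\cong\mathbb{Z}$. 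Using $\Psi_{\gamma,t}'(\gamma)=t$, the same computation shows $\eta'(\gamma)=\varphi'(\gamma)^n\neq1$ for every $\eta=\Psi_{\gamma,\varphi'(\gamma)^n}\in G\setminus\{id\}$, so the hypotheses of Theorem~\ref{ca} are met. That theorem then gives a $*$-isomorphism of $C^{*}(\{[C_{\rho_{\gamma,a}}U_\eta]:Rea>0,\eta\in G\})$ onto the minimal unitization of $C_0([0,1])\rtimes_{\tilde\alpha}G_d$, where $\tilde\alpha_\eta(f)(x)=f(x^{\eta'(\gamma)})$. Carrying $\tilde\alpha$ along $G_d\cong\mathbb{Z}$ turns it into exactly $\alpha_n(f)(x)=f(x^{\varphi'(\gamma)^n})$, and by (\ref{a1}) the source algebra is $C^{*}(C_\varphi,\mathfrak K)/\mathfrak K$, which proves the first assertion.

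For $C^{*}(C_{\varphi_1},\dots,C_{\varphi_n},\mathfrak K)/\mathfrak K$ I would argue identically, now with $G=\{\Psi_{\gamma,\varphi'_1(\gamma)^{m_1}\cdots\varphi'_n(\gamma)^{m_n}}:(m_1,\dots,m_n)\in\mathbb{Z}^n\}$. The only new point is the identification $G_d\cong\mathbb{Z}^n$: the homomorphism $(m_1,\dots,m_n)\mapsto\Psi_{\gamma,\varphi'_1(\gamma)^{m_1}\cdots\varphi'_n(\gamma)^{m_n}}$ has kernel consisting of those tuples with $\sum_i m_i\ln\varphi'_i(\gamma)=0$, which by the assumed $\mathbb{Z}$-linear independence of $\ln\varphi'_1,\dots,\ln\varphi'_n$ is just $(m_1,\dots,m_n)=0$, so the homomorphism is injective. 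The same identity gives $\eta'(\gamma)=\varphi'_1(\gamma)^{m_1}\cdots\varphi'_n(\gamma)^{m_n}\neq1$ for every nontrivial $\eta\in G$, so Theorem~\ref{ca} applies again; transporting $\tilde\alpha$ through $G_d\cong\mathbb{Z}^n$ yields $\alpha'$, and by (\ref{a0}) the source algebra is $C^{*}(C_{\varphi_1},\dots,C_{\varphi_n},\mathfrak K)/\mathfrak K$, giving the second assertion.

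The one genuinely substantive step is the pair of group identifications, and this is precisely where the arithmetic hypotheses are consumed: $\varphi'(\gamma)\neq1$ (equivalently $\ln\varphi'(\gamma)\neq0$) is exactly what makes $G_d$ a free abelian group of rank one, and the $\mathbb{Z}$-independence of the logarithms is exactly what makes $G_d$ free of rank $n$; both conditions simultaneously guarantee $\eta'(\gamma)\neq1$ off the identity, the hypothesis Theorem~\ref{ca} requires. Everything else is the functoriality of the crossed-product construction under a group isomorphism: given the isomorphism $G_d\cong\mathbb{Z}$ (respectively $\mathbb{Z}^n$) intertwining $\tilde\alpha$ with $\alpha$ (respectively $\alpha'$), the induced $*$-isomorphism of crossed products passes to the minimal unitizations, and I expect no difficulty there beyond routine bookkeeping.
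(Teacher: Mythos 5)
Your proposal is correct and follows essentially the same route as the paper, which derives this corollary in one line from (\ref{a0}), (\ref{a1}) and Theorem \ref{ca}; you have simply filled in the details of that derivation (the identification of $G_d$ with $\mathbb{Z}$ or $\mathbb{Z}^n$ and the verification that $\eta'(\gamma)\neq 1$ off the identity, using $\Psi_{\gamma,t}'(\gamma)=t$ and the arithmetic hypotheses). Nothing further is needed.
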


\section{linear-fractional non-automorphism self-maps} \label{r}
Quertermous in \cite {Qu13} shows that if $\varphi$ is
a linear-fractional non-automorphism self-map of $\mathbb{D}$
fixing $\gamma \in \mathbb{T}$ then $C^{*}(T_z,C_\varphi)/\mathfrak K$ is
$*$-isomorphic to the minimal unitization of
$C_\gamma(\mathbb{T})\oplus C_0([0,1])$ where $C_\gamma(\mathbb{T})$
is the set of all $f \in C(\mathbb{T})$  vanishing at $\gamma$. We extend this result to finitely many composition
operators induced by linear-fractional non-automorphism
self-maps of $\mathbb{D}$ with a common fixed point on the unit
circle. Our approach is similar to that of Quertermous in
\cite{Qu13}, but there are some complications. As in previous section we use the notation $[T]$ for the coset of $T$ in the Calkin algebra. Let $t_1,..,t_n$ be nonzero positive real numbers, $\gamma \in \mathbb{T}$ and $\Sigma$ be the map defined in Theorem \ref {3,2}. Consider
$$\mathcal{N}_{\gamma,t_1,...,t_n}=\{\Sigma(g)[U_{\Psi_{\gamma,t_1^{m_1}...t_n^{m_n}}}]:g\in C_0([0,1]), (m_1,...,m_n)\in \mathbb{Z}^n\}$$
and let $\mathcal{A}_{\gamma,t_1,...,t_n}$ be the non-unital $C^*$-algebra generated by $\mathcal{N}_{\gamma,t_1,...,t_n}$. By Theorem \ref{3,2} and the fact that $\Psi_{\gamma,1}$ is the identity map of $\mathbb{D}$, $ \mathcal{A}_{\gamma,1,...,1}\cong C_0([0,1])$.

\begin{proposition} \label{3.1a}
If $\varphi_1,...,\varphi_n$ are linear-fractional non-automorphism self-maps of $\mathbb{D}$  fixing $\gamma \in \mathbb{T}$, then
$$C^*(T_z,C_{\varphi_1},...,C_{\varphi_n})/\mathfrak K=C^*(\{[T_f]:f \in C(\mathbb{T})\}\cup \mathcal{A}_{\gamma,\varphi_1^{'}(\gamma),...,\varphi_n^{'}(\gamma)}).$$
Moreover if $\ln\varphi^{'}_1(\gamma),...,\ln\varphi^{'}_n(\gamma)$ are linearly independent over $\mathbb{Z}$ then $$\mathcal{A}_{\gamma,\varphi^{'}_1(\gamma),...,\varphi^{'}_n(\gamma)}\cong C_0([0,1])\rtimes_{\alpha^{'}} \mathbb{Z}^n.$$
\end{proposition}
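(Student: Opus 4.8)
The plan is to prove the two assertions separately, the whole argument resting on the identity
$$\mathcal{A}_{\gamma,t_1,\dots,t_n}=C^*\big(\{\Sigma(x^a)[U_{\Psi_{\gamma,t_1^{m_1}\cdots t_n^{m_n}}}]:\mathrm{Re}\,a>0,\ (m_1,\dots,m_n)\in\mathbb{Z}^n\}\big)$$
together with the crossed-product description supplied by Theorem \ref{ca}.

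For the first assertion I would first recall that $C^*(T_z)\supseteq\mathfrak K$ (Coburn), so $C^*(T_z,C_{\varphi_1},\dots,C_{\varphi_n})\supseteq\mathfrak K$ and the quotient map $\pi\colon B(H^2)\to B(H^2)/\mathfrak K$ carries this unital algebra onto the $C^*$-algebra generated by the cosets $[T_z],[C_{\varphi_1}],\dots,[C_{\varphi_n}]$. By Coburn's theorem $C^*([T_z])=\{[T_f]:f\in C(\mathbb{T})\}$, while (\ref{a0}) shows that modulo $\mathfrak K$ the operators $C_{\varphi_i}$ generate the same algebra as $\{[C_{\rho_{\gamma,a}}U_{\Psi}]\}=\{\Sigma(x^a)[U_\Psi]\}$, where $[C_{\rho_{\gamma,a}}]=\Sigma(x^a)$ by Theorem \ref{3,2}. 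The remaining point is the displayed identity for $\mathcal{A}_{\gamma,t_1,\dots,t_n}$: the inclusion $\supseteq$ is clear since $x^a\in C_0([0,1])$, and for $\subseteq$ I would note that $(\Sigma(x^a)[U_\Psi])(\Sigma(x^b)[U_\Psi])^*=\Sigma(x^{a+\bar b})$, so $\Sigma(C_0([0,1]))$ lies in the algebra by Stone--Weierstrass, and then that $\Sigma(g)[U_\Psi]=\lim_{a\to0^+}\Sigma(g)\,(\Sigma(x^a)[U_\Psi])$ because $gx^a\to g$ uniformly for $g\in C_0([0,1])$. Assembling the two pieces gives the asserted equality.

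For the second assertion, set $t_i=\varphi_i'(\gamma)$ and $G=\{\Psi_{\gamma,t_1^{m_1}\cdots t_n^{m_n}}\}$. Since $\ln t_1,\dots,\ln t_n$ are independent over $\mathbb{Z}$, the map $(m_1,\dots,m_n)\mapsto\Psi_{\gamma,t_1^{m_1}\cdots t_n^{m_n}}$ is injective, so $G\cong\mathbb{Z}^n$ and $\eta'(\gamma)\neq1$ for every $\eta\in G\setminus\{\mathrm{id}\}$. I would then exhibit a covariant representation of $(C_0([0,1]),\mathbb{Z}^n,\alpha')$ consisting of the faithful $*$-homomorphism $\pi=\Sigma|_{C_0([0,1])}$ and the unitaries $V_i=[U_{\Psi_{\gamma,t_i}}]$, which form a genuine unitary representation of $\mathbb{Z}^n$ by Theorem \ref{unitary} and the group law $\Psi_{\gamma,s}\circ\Psi_{\gamma,s'}=\Psi_{\gamma,ss'}$, so that $V_1^{m_1}\cdots V_n^{m_n}=[U_{\Psi_{\gamma,t_1^{m_1}\cdots t_n^{m_n}}}]$. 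The covariance $V_i\Sigma(f)V_i^*=\Sigma(\alpha'_{e_i}(f))$ is exactly the relation built into Theorem \ref{ca}; on the generators $f=x^a$ it reads $V_i\Sigma(x^a)V_i^*=\Sigma(x^{t_ia})$, which one checks from the exact composition identity $C_{\Psi_{\gamma,t_i}}C_{\rho_{\gamma,a}}C_{\Psi_{\gamma,t_i}}^{-1}=C_{\rho_{\gamma,t_ia}}$ (a half-plane conjugation computation) after passing from $C_{\Psi}$ to $U_{\Psi}$. As $\mathbb{Z}^n$ is amenable, the universal property of the (full, hence reduced) crossed product yields a $*$-homomorphism $\Phi\colon C_0([0,1])\rtimes_{\alpha'}\mathbb{Z}^n\to B(H^2)/\mathfrak K$ with $\Phi(f u_m)=\Sigma(f)V_1^{m_1}\cdots V_n^{m_n}$, whose image is precisely $C^*(\mathcal{N}_{\gamma,t_1,\dots,t_n})=\mathcal{A}_{\gamma,t_1,\dots,t_n}$.

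It remains to prove that $\Phi$ is injective, and this is the step I expect to be the main obstacle. I would obtain it from the uniqueness theorem for crossed products by topologically free actions: the spectrum of $C_0([0,1])$ is $(0,1]$, and for $m\neq0$ the homeomorphism $x\mapsto x^{t_1^{m_1}\cdots t_n^{m_n}}$ has its only fixed point at $x=1$ precisely because $t_1^{m_1}\cdots t_n^{m_n}\neq1$ --- and this is exactly where the $\mathbb{Z}$-independence of $\ln t_1,\dots,\ln t_n$ is used. Hence the non-fixed points form the dense set $(0,1)$, the action is topologically free, and since $\Phi$ is faithful on $C_0([0,1])$ (it restricts to $\Sigma$) the uniqueness theorem forces $\Phi$ to be injective, making it a $*$-isomorphism onto $\mathcal{A}_{\gamma,t_1,\dots,t_n}$. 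Alternatively one may deduce injectivity by unitizing $\Phi$ and matching it with the isomorphism of Theorem \ref{ca} and Corollary \ref{coro13}, observing that the latter carries the canonical ideal $C_0([0,1])\rtimes_{\alpha'}\mathbb{Z}^n$ onto the non-unital algebra $\mathcal{A}_{\gamma,t_1,\dots,t_n}$. The two delicate points are the exact form of the covariance relation, namely controlling the positive part of the polar decomposition when replacing $C_{\Psi}$ by $U_{\Psi}$, and the verification of topological freeness, which is the entry point of the linear-independence hypothesis.
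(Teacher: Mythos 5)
Your proof is correct, and the first assertion is handled essentially as in the paper: both arguments reduce to the identity $\mathcal{A}_{\gamma,t_1,\dots,t_n}=C^*(\{[C_{\rho_{\gamma,a}}U_{\Psi}]\})$ via Theorem \ref{3,2} and the density of the span of $\{x^a:\mathrm{Re}\,a>0\}$ in $C_0([0,1])$, and then invoke (\ref{a0}); your explicit computation $(\Sigma(x^a)[U_\Psi])(\Sigma(x^b)[U_\Psi])^*=\Sigma(x^{a+\bar b})$ and the limit $gx^a\to g$ just make precise what the paper leaves as a density remark. Where you genuinely diverge is the ``moreover'' clause: the paper disposes of it in one line by citing Corollary \ref{coro13} (i.e.\ Quertermous's Theorems 4.6 and 4.7, which rest on Theorem \ref{ca}), whereas you re-derive that result from scratch by exhibiting a covariant representation $(\Sigma|_{C_0([0,1])},\,m\mapsto[U_{\Psi_{\gamma,t_1^{m_1}\cdots t_n^{m_n}}}])$, invoking amenability of $\mathbb{Z}^n$, and proving injectivity of the induced map via topological freeness of the action on $(0,1]$ (fixed-point set $\{1\}$ for each nontrivial group element, which is exactly where $\mathbb{Z}$-independence of the $\ln t_i$ enters). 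Your route is longer but more self-contained and has one real advantage: Corollary \ref{coro13} only identifies the \emph{unitization} with the quotient algebra, so the paper's citation tacitly assumes the isomorphism matches the canonical ideal with the non-unital algebra $\mathcal{A}_{\gamma,t_1,\dots,t_n}$, a point your direct construction (and your closing ``alternatively'' remark) makes explicit. Two small caveats: you should record at the outset that $\varphi_i'(\gamma)>0$ so that the $t_i$ and $\Psi_{\gamma,t}$ are well defined, and your covariance identity has the conjugation running the wrong way --- in half-plane coordinates $\Psi_{\gamma,t}\circ\rho_{\gamma,a}\circ\Psi_{\gamma,t}^{-1}=\rho_{\gamma,ta}$ translates, since composition operators reverse order, into $C_{\Psi_{\gamma,t}}C_{\rho_{\gamma,a}}C_{\Psi_{\gamma,t}}^{-1}=C_{\rho_{\gamma,a/t}}$, so either the unitaries or the action must be replaced by their inverses to match the convention of Theorem \ref{ca}; this is a bookkeeping fix, not a gap.
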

\begin{proof}
First we note that if $\varphi$ is a linear-fractional non-automorphism self-map of $\mathbb{D}$ fixing $\gamma \in \mathbb{D}$, then $\varphi^{'}(\gamma)>0$ (\cite{sh93}). By Theorem \ref{3,2}, $\Sigma(x^{a})=[C_{\rho_{\gamma,a}}]$ for $Rea>0$. Since the closed linear span of $\{x^a:Rea>0$ is dense in $C_0([0,1])$, $\mathcal{A}_{\gamma,\varphi^{'}_1(\gamma),...,\varphi^{'}_n(\gamma)}$ is the same as
\begin{align*}C^*(\{\Sigma(g)&[U_{\Psi_{\gamma,\varphi_1^{'}(\gamma)^{m_1}...\varphi_n^{'}(\gamma)^{m_n}}}]
:g\in C_0([0,1]), (m_1,...,m_n)\in \mathbb{Z}^n\})\\&=C^*(\{[C_{\rho_{\gamma ,a}}U_{\Psi_{\gamma,\varphi^{'}_1(\gamma)^{m_1}...\varphi^{'}_n(\gamma)^{m_n}}}]:Rea>0, (m_1,...,m_n)\in \mathbb{Z}^n\}).\end{align*}
Therefore by (\ref{a0}) $$C^*(T_z,C_{\varphi_1},...,C_{\varphi_n})/\mathfrak K=C^*(\{[T_f]:f \in C(\mathbb{T})\}\cup \mathcal{A}_{\gamma,\varphi_1^{'}(\gamma),...,\varphi_n^{'}(\gamma)}).$$ The last statement is a consequence of Corollary \ref{coro13}.\end{proof}

We set
$$\mathcal{C}_{\gamma,t_1,...,t_n}=\{[T_f]+A:f \in C^*(\psi), A\in \mathcal{A}_{\gamma,t_1,...,t_n}\}.$$
By Proposition \ref{3.1a}, $C^*(T_z,C_{\varphi_1},...,C_{\varphi_n})/\mathfrak K=C^*(\mathcal{C}_{\gamma,\varphi_1^{'}(\gamma),...,\varphi_n^{'}(\gamma)})$. We show that $\mathcal{C}_{\gamma,\varphi_1^{'}(\gamma),...,\varphi_n^{'}(\gamma)}$ is indeed a $C^*$-algebra and describe it. We need the following Lemma.
\begin{lemma} \cite [Lemma 6.3]  {Qu13} \label{lem}
If $\gamma \in \mathbb{T}$, $f\in C(\mathbb{T})$ and $A\in \mathcal{A}_{\gamma,t_1,...,t_n}$, then
$$[T_f]A=f(\gamma)A=A[T_f].$$
Moreover if $[T_f]+A=[0]$ then $f\equiv0$ and $A=0.$
\end{lemma}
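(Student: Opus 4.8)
The plan is to treat the two assertions separately, deriving the ``moreover'' clause from the commutation relations established in the first clause. For the commutation relations, I would first observe that the set
$$S=\{B\in B(H^2)/\mathfrak{K}: [T_g]B=g(\gamma)B=B[T_g]\ \text{for all}\ g\in C(\mathbb{T})\}$$
is a $C^*$-subalgebra of the Calkin algebra. It is a norm-closed subspace (an intersection of kernels of continuous maps), it is closed under products since $[T_g](B_1B_2)=g(\gamma)B_1B_2=(B_1B_2)[T_g]$ whenever $B_1,B_2\in S$, and it is closed under adjoints because $[T_g]^*=[T_{\bar g}]$ and $\overline{g(\gamma)}=\bar g(\gamma)$, so conjugating the defining identities for $B$ and letting $g$ range over all of $C(\mathbb{T})$ reproduces the same identities for $B^*$. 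Hence it suffices to show that every generator of $\mathcal{A}_{\gamma,t_1,\dots,t_n}$ lies in $S$, for then $\mathcal{A}_{\gamma,t_1,\dots,t_n}=C^*(\mathcal{N}_{\gamma,t_1,\dots,t_n})\subseteq S$.

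Since $\{x^a:\operatorname{Re}a>0\}$ has dense linear span in $C_0([0,1])$ and $\Sigma$ is isometric, norm-continuity of multiplication by the fixed element $[U_\Psi]$ reduces the verification to the elements $\Sigma(x^a)[U_\Psi]=[C_{\rho_{\gamma,a}}U_\Psi]$, where $\Psi=\Psi_{\gamma,t_1^{m_1}\cdots t_n^{m_n}}$. The left identity is immediate from Theorem \ref{3.1}: as $\rho_{\gamma,a}$ is a linear-fractional non-automorphism fixing $\gamma$, we have $T_gC_{\rho_{\gamma,a}}=g(\gamma)C_{\rho_{\gamma,a}}+k$, whence $[T_g][C_{\rho_{\gamma,a}}U_\Psi]=g(\gamma)[C_{\rho_{\gamma,a}}U_\Psi]$. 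For the right identity I would push $[T_g]$ through $[U_\Psi]$ using Theorem \ref{unitary}, which gives $[U_\Psi][T_g]=[T_{g\circ\Psi}][U_\Psi]$; then, again by Theorem \ref{3.1} and the fact that the automorphism $\Psi$ fixes $\gamma$ (so $(g\circ\Psi)(\gamma)=g(\gamma)$),
$$[C_{\rho_{\gamma,a}}U_\Psi][T_g]=[C_{\rho_{\gamma,a}}][T_{g\circ\Psi}][U_\Psi]=(g\circ\Psi)(\gamma)[C_{\rho_{\gamma,a}}U_\Psi]=g(\gamma)[C_{\rho_{\gamma,a}}U_\Psi].$$
This establishes the first clause.

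For the ``moreover'' clause, suppose $[T_f]+A=[0]$, i.e.\ $[T_f]=-A$. Multiplying on the left by $[T_g]$ for arbitrary $g\in C(\mathbb{T})$ and invoking the first clause yields $[T_g][T_f]=-[T_g]A=-g(\gamma)A=g(\gamma)[T_f]$. By Coburn's theorem the symbol map $h\mapsto[T_h]$ is an injective $*$-homomorphism realizing $C^*(T_z)/\mathfrak{K}\cong C(\mathbb{T})$, so $[T_g][T_f]=[T_{gf}]$ and the relation reads $[T_{gf}]=[T_{g(\gamma)f}]$. Injectivity forces $gf=g(\gamma)f$ for every $g\in C(\mathbb{T})$; for each $z\neq\gamma$, choosing $g$ with $g(z)\neq g(\gamma)$ gives $f(z)=0$, and continuity then gives $f\equiv0$. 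Consequently $A=-[T_f]=[0]$.

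The step I expect to be the main obstacle is the right-hand commutation $A[T_f]=f(\gamma)A$: unlike the left-hand identity, it is not supplied directly by Theorem \ref{3.1} but requires transporting $[T_f]$ across the unitary part $[U_\Psi]$ via Theorem \ref{unitary}, after which the essential simplification is that both $\rho_{\gamma,a}$ and $\Psi$ fix $\gamma$, collapsing every boundary evaluation to $f(\gamma)$.
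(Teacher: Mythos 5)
Your argument is correct, but note that the paper offers no proof of this lemma to compare against: it is quoted verbatim as \cite[Lemma~6.3]{Qu13}. Your reconstruction is the natural one and matches the strategy of the cited source: reduce to the generators $[C_{\rho_{\gamma,a}}U_{\Psi}]$ via density of $\mathrm{span}\{x^a\}$ and the closedness of your set $S$, handle the left relation by Theorem~\ref{3.1}, handle the right relation by conjugating $[T_g]$ past $[U_{\Psi}]$ with Theorem~\ref{unitary} and using $\Psi(\gamma)=\gamma$, and then derive the uniqueness clause from the injectivity of the symbol map $g\mapsto[T_g]$ on $C(\mathbb{T})$ guaranteed by Coburn's theorem.
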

The first part of above lemma says that $\mathcal{C}_{\gamma,\varphi_1^{'}(\gamma),...,\varphi_n^{'}(\gamma)}$ is closed under multiplication and it is a dense $*$-subalgebra of $C^*(T_z,C_{\varphi_1},...,C_{\varphi_n})/\mathfrak K$.
\begin{theorem} \label{unitization}
If $ \varphi _1,...,\varphi_n $ are linear-fractional non-automorphism self-maps of $\mathbb{D}$  fixing $\gamma \in \mathbb{T}$, then $C^*(T_z,C_{\varphi _1},...,C_{\varphi _1})/\mathfrak K $ is $*$-isomorphic to the minimal unitization of $C_\gamma (\mathbb{T})\oplus \mathcal{A}_{\gamma,\varphi_1^{'}(\gamma),...,\varphi_n^{'}(\gamma)}$.
\begin{proof}
Let $\mathcal{B}$ is the minimal unitization of $C_\gamma (\mathbb{T})\oplus \mathcal{A}_{\gamma,\varphi_1^{'}(\gamma),...,\varphi_n^{'}(\gamma)}$. Since $C_\gamma (\mathbb{T})=\{f-f(\gamma):f\in C(\mathbb{T})\}$ we may define the map $\Delta :\mathcal{B}\rightarrow \mathcal{C}_{\gamma,\varphi_1^{'}(\gamma),...,\varphi_n^{'}(\gamma)}$ by
$$\Delta((f-f(\gamma),A)+f(\gamma)I)=[T_f]+A,$$
for $f\in C(T)$ and $A\in \mathcal{A}_{\gamma,\varphi_1^{'}(\gamma),...,\varphi_n^{'}(\gamma)} $. By part two of Lemma \ref{lem}, $\Delta$ is injective.
Let $\alpha=(f_1-f_1(\gamma),A_1)+f_1(\gamma) I$ and $\beta=(f_2-f_2(\gamma),A_2)+f_2(\gamma) I$, for some $f_1,f_2 \in C(T)$ and $A_1,A_2 \in \mathcal{A}$. Then
\begin{align*}\Delta(\alpha \beta)&=\Delta(f_1f_2-f_1(\gamma)f_2-f_2(\gamma)f_1+f_1(\gamma)f_2(\gamma),A_1A_2)\\
&+f_1(\gamma)(f_2-f_2(\gamma),A_2)+f_2(\gamma)(f_1-f_1(\gamma),A_1)+f_1(\gamma)f_2(\gamma)\\
&=\Delta(f_1f_2-f_1(\gamma)f_2(\gamma),A_1A_2+f_1(\gamma)A_1+f_2(\gamma)A_2)+f_1(\gamma)f_2(\gamma)I\\
&=[T_{f_1f_2}]+A_1A_2+f_1(\gamma)A_2+f_2(\gamma)A_1.\end{align*}
Therefore, again by Lemma \ref{lem},
$$\Delta(\alpha \beta)=[T_{f_1f_2}]+A_1A_2+[T_{f_1}]A_2+[T_{f_2}]A_1=([T_{f_1}]+A_1)([T_{f_2}]+A_2)=\Delta(\alpha)\Delta(\beta).$$
Hence $\Delta$ is an injective $*$-homomorphism, and
thus an isometry with closed range $\mathcal{C}_{\gamma,\varphi_1^{'}(\gamma),...,\varphi_n^{'}(\gamma)}$. Therefore the $C^*$-algebra $$C^*(T_z,C_{\varphi _1},...,C_{\varphi _1})/\mathfrak K =\mathcal{C}_{\gamma,\varphi_1^{'}(\gamma),...,\varphi_n^{'}(\gamma)}$$ is $*$-isomorphic to the minimal unitization of $C_\gamma (\mathbb{T})\oplus \mathcal{A}_{\gamma,\varphi_1^{'}(\gamma),...,\varphi_n^{'}(\gamma)}$.
\end{proof}
\end{theorem}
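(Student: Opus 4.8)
The plan is to build an explicit $*$-isomorphism from the abstract algebra $\mathcal{B}$, the minimal unitization of $C_\gamma(\mathbb{T}) \oplus \mathcal{A}_{\gamma,\varphi_1^{'}(\gamma),\dots,\varphi_n^{'}(\gamma)}$, onto the dense $*$-subalgebra $\mathcal{C}_{\gamma,\varphi_1^{'}(\gamma),\dots,\varphi_n^{'}(\gamma)}$ of the quotient, and then to promote this to an isomorphism onto the whole quotient using the automatic isometry property of injective $*$-homomorphisms. First I would exploit the identification $C_\gamma(\mathbb{T}) = \{f - f(\gamma) : f \in C(\mathbb{T})\}$ to parametrize a general element of $\mathcal{B}$ as $(f - f(\gamma), A) + f(\gamma) I$ with $f \in C(\mathbb{T})$ and $A \in \mathcal{A}_{\gamma,\dots}$, and define $\Delta$ by sending this element to $[T_f] + A$. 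One checks $\Delta$ is well defined, since $f$ is recovered from the pair (its value $f(\gamma)$ is the scalar component and $f - f(\gamma)$ is the $C_\gamma(\mathbb{T})$ component), and that $\Delta$ is linear and $*$-preserving, the latter because $T_{\bar f} = T_f^*$.

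The central step is multiplicativity. I would expand the product $\alpha\beta$ of two elements $\alpha = (f_1 - f_1(\gamma), A_1) + f_1(\gamma) I$ and $\beta = (f_2 - f_2(\gamma), A_2) + f_2(\gamma)I$ directly in the unitized direct sum: the componentwise multiplication recombines the function parts into $f_1 f_2$ and produces the $\mathcal{A}$-part $A_1 A_2 + f_1(\gamma) A_2 + f_2(\gamma) A_1$, so that $\Delta(\alpha\beta) = [T_{f_1 f_2}] + A_1 A_2 + f_1(\gamma) A_2 + f_2(\gamma) A_1$. On the other side, $\Delta(\alpha)\Delta(\beta) = ([T_{f_1}] + A_1)([T_{f_2}] + A_2)$, and the match follows from two inputs: the product of Toeplitz cosets satisfies $[T_{f_1}][T_{f_2}] = [T_{f_1 f_2}]$ (continuous symbols multiply modulo compacts), and Lemma \ref{lem} collapses the cross terms as $[T_{f_1}]A_2 = f_1(\gamma) A_2$ and $A_1[T_{f_2}] = f_2(\gamma) A_1$. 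The two expressions then coincide.

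Injectivity is immediate from the second part of Lemma \ref{lem}: if $\Delta((f - f(\gamma), A) + f(\gamma)I) = [T_f] + A = [0]$, then $f \equiv 0$ and $A = 0$, so the argument vanishes. An injective $*$-homomorphism of $C^*$-algebras is isometric and hence has closed range; since that range is $\mathcal{C}_{\gamma,\dots}$, which by Proposition \ref{3.1a} together with Lemma \ref{lem} is a dense $*$-subalgebra of $C^*(T_z, C_{\varphi_1},\dots,C_{\varphi_n})/\mathfrak{K}$, closedness forces $\Delta$ to be surjective onto the full quotient, completing the isomorphism. I expect the main obstacle to be the careful bookkeeping of the unitized-direct-sum product and checking that Lemma \ref{lem} exactly cancels the off-diagonal Toeplitz-times-$\mathcal{A}$ terms; conceptually, that lemma is precisely what forces the Toeplitz and $\mathcal{A}$ pieces to assemble into a genuine direct sum, twisted only by the scalar evaluation $f(\gamma)$, rather than into a more entangled extension.
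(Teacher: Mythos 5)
Your proposal is correct and follows essentially the same route as the paper's own proof: the same map $\Delta$ on the minimal unitization, the same use of Lemma \ref{lem} for injectivity and for collapsing the cross terms $[T_{f_1}]A_2$ and $A_1[T_{f_2}]$, and the same conclusion via the automatic isometry of injective $*$-homomorphisms combined with density of $\mathcal{C}_{\gamma,\varphi_1^{'}(\gamma),\dots,\varphi_n^{'}(\gamma)}$. No substantive differences.
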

The following result immediately follows from the above theorem and Proposition \ref{3.1a}.

\begin{corollary}
If $ \varphi _1,...,\varphi_n $ are linear-fractional non-automorphism self-maps of $\mathbb{D}$ fixing $\gamma \in \mathbb{T}$ and $\ln \varphi_1^{'}(\gamma),...,\ln \varphi_n^{'}(\gamma)$ are linearly independent over $\mathbb{Z}$,
then $C^*(T_z,C_{\varphi _1},...,C_{\varphi _n})/\mathfrak K $ $*$-isomorphic to the minimal unitization of the direct sum $C_\gamma(\mathbb{T})\oplus( C_0([0,1])\rtimes_{\alpha^{'}} \mathbb{Z}^{n})$.
\end{corollary}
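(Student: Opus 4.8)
The plan is to combine the two structural facts already in hand, since under the linear-independence hypothesis each summand in the description of Theorem \ref{unitization} is now completely understood. First I would apply Theorem \ref{unitization} verbatim: for any linear-fractional non-automorphism self-maps $\varphi_1,\ldots,\varphi_n$ of $\mathbb{D}$ fixing $\gamma$, it furnishes a $*$-isomorphism of $C^*(T_z,C_{\varphi_1},\ldots,C_{\varphi_n})/\mathfrak K$ onto the minimal unitization of $C_\gamma(\mathbb{T})\oplus\mathcal{A}_{\gamma,\varphi_1'(\gamma),\ldots,\varphi_n'(\gamma)}$. This reduces the entire problem to identifying the single summand $\mathcal{A}_{\gamma,\varphi_1'(\gamma),\ldots,\varphi_n'(\gamma)}$, the other summand $C_\gamma(\mathbb{T})$ already being in the desired form.

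Next I would invoke the linear-independence hypothesis through the second assertion of Proposition \ref{3.1a}. Once $\ln\varphi_1'(\gamma),\ldots,\ln\varphi_n'(\gamma)$ are linearly independent over $\mathbb{Z}$, that proposition supplies a $*$-isomorphism
$$\Theta:\mathcal{A}_{\gamma,\varphi_1'(\gamma),\ldots,\varphi_n'(\gamma)}\;\xrightarrow{\ \cong\ }\;C_0([0,1])\rtimes_{\alpha'}\mathbb{Z}^n,$$
with the action $\alpha'$ exactly as specified in Corollary \ref{coro13}. Together with the identity map on $C_\gamma(\mathbb{T})$ this gives a $*$-isomorphism
$$\mathrm{id}_{C_\gamma(\mathbb{T})}\oplus\Theta:\;C_\gamma(\mathbb{T})\oplus\mathcal{A}_{\gamma,\varphi_1'(\gamma),\ldots,\varphi_n'(\gamma)}\;\xrightarrow{\ \cong\ }\;C_\gamma(\mathbb{T})\oplus\bigl(C_0([0,1])\rtimes_{\alpha'}\mathbb{Z}^n\bigr),$$
since an orthogonal direct sum of $*$-isomorphisms is again a $*$-isomorphism.

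Finally I would pass this isomorphism through the minimal unitization functor. A $*$-isomorphism between nonunital $C^*$-algebras extends uniquely to a unital $*$-isomorphism of their minimal unitizations, acting by $(x,\lambda I)\mapsto\bigl((\mathrm{id}_{C_\gamma(\mathbb{T})}\oplus\Theta)(x),\lambda I\bigr)$; composing this with the isomorphism from Theorem \ref{unitization} yields the asserted $*$-isomorphism of $C^*(T_z,C_{\varphi_1},\ldots,C_{\varphi_n})/\mathfrak K$ onto the minimal unitization of $C_\gamma(\mathbb{T})\oplus(C_0([0,1])\rtimes_{\alpha'}\mathbb{Z}^n)$. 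There is no genuine obstacle here beyond this functoriality, which is precisely why the result is flagged as immediate; the only point meriting an explicit word is the standard uniqueness of the extension of a $*$-homomorphism to the minimal unitization.
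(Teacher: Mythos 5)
Your proposal is correct and follows exactly the route the paper intends: the paper states the corollary "immediately follows" from Theorem \ref{unitization} together with the second assertion of Proposition \ref{3.1a}, which is precisely the combination you carry out. Your added remark about functoriality of the minimal unitization is the only detail the paper leaves implicit, and it is handled correctly.
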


\section{Irreducible Toeplitz operators } \label{s}
Let $X$ be a compact Hausdorff space and $\mathcal{A}$ be a $C^*$-subalgebra of $C(X)$ containing the constants. For $x,y \in X$, put $x\thicksim y$ if and only if $f(x)=f(y)$ for all $f$ in $\mathcal{A}$. Then $\thicksim$ is an equivalence relation on $X$. Let $[x]$ denote the equivalence class of $x$ and $[X]$ be the quotient space and equip $[X]$ with the weak topology induced by $\mathcal{A}$. Let $X/\thicksim$ be the quotient space equipped with the quotient topology. Then $\mathcal{A}$ is $*$-isomorphic to $C([X])$ and a $C^*$-subalgebra of $C(X/\thicksim)$ via $f\mapsto \tilde{f}$ where $\tilde{f}([x]):=f(x)$ for $x \in X$. Note that $[X]$ is always Hausdorff and it is homeomorphic to $X/\thicksim$ when the latter is Hausdorff \cite{co69}.

If $D$ is an irreducible $C^*$-subalgebra of $C^*(T_z)$, then it contains a nonzero compact operator \cite{co69}. Hence $D$ contains all of compact operators on $H^2$ \cite{mu90}. We set
$$D_0= \{f \in C(\mathbb{T}): T_f \in D \}.$$
\begin{theorem}\cite [Theorem 2 and 5]  {co69} \label{1co69}
If $D$ is an irreducible $C^*$-subalgebra of $C^*(T_z)$, then $D_0$ is a $C^*$-subalgebra of $C(\mathbb{T})$ and $D/\mathfrak K$ is $*$-isomorphic to $C([\mathbb{T}])$, where $[\mathbb{T}]$ is the quotient with respect to the equivalence relation induced by $D_0$.
\end{theorem}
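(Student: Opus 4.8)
The plan is to transport the whole problem into the commutative quotient $C^*(T_z)/\mathfrak K$, which Coburn identifies with $C(\mathbb T)$, and then to read off the structure of $D/\mathfrak K$ from the Gelfand-type correspondence recalled at the opening of this section. The sole role of irreducibility is to guarantee the inclusion $\mathfrak K\subseteq D$: by the remark preceding the statement, an irreducible $D$ contains a nonzero compact operator and hence, by \cite{mu90}, all of $\mathfrak K$. From this point on I would use only $\mathfrak K\subseteq D$.

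First I would set up the relevant maps. Let $q\colon C^*(T_z)\to C^*(T_z)/\mathfrak K$ be the quotient map, let $\theta\colon C^*(T_z)/\mathfrak K\to C(\mathbb T)$ be Coburn's $*$-isomorphism determined by $\theta(q(T_\phi))=\phi$, and put $\pi=\theta\circ q$. Since $\mathfrak K$ is an ideal of $C^*(T_z)$ contained in $D$, it is an ideal of $D$ and coincides with the kernel of $\pi|_D$; hence $\pi|_D$ induces an injective $*$-homomorphism $D/\mathfrak K\to C(\mathbb T)$, which is automatically isometric, so $\pi(D)$ is a $C^*$-subalgebra of $C(\mathbb T)$ and $D/\mathfrak K\cong\pi(D)$. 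The key step is the identification $\pi(D)=D_0$. If $\phi\in\pi(D)$, pick $a\in D$ with $\pi(a)=\phi$ and write $a=T_\phi+k$ with $k\in\mathfrak K$ in Coburn's unique decomposition; since $k\in\mathfrak K\subseteq D$ we obtain $T_\phi=a-k\in D$, that is $\phi\in D_0$, while the reverse inclusion follows at once from $\pi(T_\phi)=\phi$. Thus $D_0=\pi(D)$ is a $C^*$-subalgebra of $C(\mathbb T)$, which is the first assertion, and moreover $D/\mathfrak K\cong D_0$.

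It remains to describe $D_0$ as an algebra of functions on a quotient of $\mathbb T$. Since $D$ is unital, containing $I=T_1$, the subalgebra $D_0$ contains the constants, so I would apply the Gelfand-type correspondence from the opening remarks of this section with $X=\mathbb T$ and $\mathcal A=D_0$: this yields a $*$-isomorphism $D_0\cong C([\mathbb T])$, where $[\mathbb T]$ is the quotient of $\mathbb T$ by the relation $\gamma\sim\gamma'$ iff $f(\gamma)=f(\gamma')$ for all $f\in D_0$, equipped with the weak topology induced by $D_0$. Composing with $D/\mathfrak K\cong D_0$ gives $D/\mathfrak K\cong C([\mathbb T])$. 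I expect the main obstacle to lie in the clean identification $\pi(D)=D_0$—which rests entirely on $\mathfrak K\subseteq D$—and, more subtly, in the requirement that $D_0$ contain the constants: this is exactly where the unitality of $D$ is used, since for a nonunital irreducible algebra (for instance $D=\mathfrak K$, where $D_0=\{0\}$) the final isomorphism with $C([\mathbb T])$ would fail. Granting unitality, the Gelfand step is supplied by the preliminaries, and the only remaining care is in checking that $[\mathbb T]$ is Hausdorff and carries the stated topology, as recorded in that correspondence.
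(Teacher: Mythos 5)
The paper states this result purely as a citation of Coburn's Theorems 2 and 5 and supplies no proof of its own, and your argument is the standard one and is correct: the reduction of irreducibility to $\mathfrak K\subseteq D$, the identification $\pi(D)=D_0$ via the unique decomposition $a=T_\phi+k$, and the Gelfand-type correspondence from the opening of Section~\ref{s} are exactly the intended route. Your side remark that $D$ must be unital is also well taken --- $D=\mathfrak K$ is irreducible with $D_0=\{0\}$ while $C([\mathbb T])$ would be $\mathbb C$ --- so you have correctly identified an implicit hypothesis (that $D$ contains the identity, as in Coburn's original formulation) which the transcribed statement omits.
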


Let $T_\psi$ be an irreducible Toeplitz operator (i.e. the only closed vector subspaces of $H^2$ reducing for $T_{\psi}$ are $0$ and $H^2$) with continuous symbol $\psi$ (see \cite{bh64,co67,co69,no67}). Then $D=C^*(T_\psi)$ is irreducible and by Theorem 6 in \cite{co69}, $D_0$ is the $C^*$-algebra of $C(\mathbb{T})$ generated by $\psi$. Hence by Theorem \ref{1co69}, $D/\mathfrak K$ is $*$-isomorphic to $C([\mathbb{T}])$ where $[\mathbb{T}]$ is the quotient with respect to the equivalence relation induced by $\psi$ (that is $x\thicksim y$ if and only if $\psi(x)=\psi(y)$).

Note that $T_z$ is irreducible and there are other irreducible Toeplitz operators (for example, see Example 1 and 2 in \cite{no67}). If $D=C^*(T_{\psi})= C^*(T_z)$ For some continuous function $\psi$,
then $D_0=C(\mathbb{T})$ is generated by $\psi$ and by the Stone-Weierstrass theorem, $\psi$ must be one-to-one on the unit circle. Therefore we are interested in the case that $\psi$ is not one-to-one on $\mathbb{T}$.

Let $T_\psi$ be an irreducible Toeplitz operator and  $[\mathbb T]$ be the quotient space with respect to the equivalence relation induced by $\psi$. Put
$$C_{[\gamma]}([\mathbb{T}]):=\{f\in C([\mathbb{T}]):f([\gamma])=0\}$$
and let $\mathcal{B}_{\gamma,t}$ be the~minimal unitization of $C_{[\gamma]}([\mathbb{T}])\oplus \mathcal{A}_{\gamma,t}. $

\begin{theorem} \label{irre}
If $T_\psi$ is an irreducible Toeplitz operator on Hardy space $H^2$ with symbol $\psi \in C(\mathbb{T})$ and $\varphi$ is a linear-fractional non-automorphism self-map of $\mathbb{T}$  fixing $\gamma \in \mathbb{T}$, then $C^*(T_\psi,C_\varphi)/\mathfrak K$ is $*$-isomorphic to $\mathcal{B}_{\gamma,\varphi^{'}(\gamma)}$.
\begin{proof}
Similar to the proof of Proposition \ref{3.1a},
\begin{equation} \label {b1}
C^*(T_\psi,C_\varphi)/\mathfrak K=C^{*}(\{[T_\phi]:\varphi \in C^{*}(\psi)\}\cup\mathcal{A}_{\gamma,\varphi^{'}(\gamma)}).
\end{equation}
For $t>0$, set
$$\mathcal{C}_{\gamma,t,\psi}:=\{[T_\phi]+A:\phi \in C^*(\psi), A\in \mathcal{A}_{\gamma,t}\}.$$
By (\ref{b1}), $C^*(T_\psi,C_\varphi)/\mathfrak K=C^*(C_{\gamma,t,\psi})$. We show that $C_{\gamma,t,\psi}$ is a $C^*$-algebra and describe it. It is clear that $C_{\gamma,t,\psi}$ is closed under taking linear combination and adjoint. On the other hand, by Lemma \ref{lem} and the fact that for $\phi_1,\phi_2 \in C^*(\psi)$, $[T_{\phi_1}][T_{\phi_2}]=[T_{\phi_1\phi_2}]$ (since $T_{\phi_1\phi_2}-T_{\phi_1}T_{\phi_2}$ is a compact operator), $C_{\gamma,t,\psi}$ is also closed under multiplication. Hence $\mathcal{C}_{\gamma,t,\psi}$ is a dense $*$-subalgebra of $C^*(T_\psi,C_\varphi)/\mathfrak K$. Similar to the proof of Theorem \ref{unitization}, we define the map $\mathcal{F}:\mathcal{B}_{\gamma,\varphi^{'}(\gamma)}\rightarrow \mathcal{C}_{\gamma,\varphi^{'}(\gamma),\psi}$, by
$$\mathcal{F}((\tilde{f}-\tilde{f}([\gamma]),A)+\tilde{f}([\gamma])I=[T_f]+A,$$
for $f\in C^*(\psi)$ and $A\in \mathcal{A}_{\gamma,\varphi^{'}(\gamma)}$. By Lemma \ref{lem}, $\mathcal{\mathcal{F}}$ is an injective $*$-homomorphism and its image is $\mathcal{C}_{\gamma,\varphi^{'}(\gamma),\psi}=C^*(T_\psi,C_\varphi)/\mathfrak K$.
\end{proof}
\end{theorem}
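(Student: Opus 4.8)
The plan is to follow the template of Theorem~\ref{unitization}, the only genuinely new ingredient being Coburn's structure theorem, which lets the proper subalgebra $C^*(\psi)\cong C([\mathbb{T}])$ play the role that $C(\mathbb{T})$ played there. First I would pin down the generation statement
\[
C^*(T_\psi,C_\varphi)/\mathfrak K=C^{*}(\{[T_\phi]:\phi\in C^{*}(\psi)\}\cup\mathcal{A}_{\gamma,\varphi^{'}(\gamma)}).
\]
Since $T_\psi$ is irreducible, $C^*(T_\psi)$ already contains $\mathfrak K$, so $C^*(T_\psi,C_\varphi)=C^*(T_\psi,C_\varphi,\mathfrak K)$ and passage to the Calkin algebra loses nothing. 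By Theorem~\ref{1co69} and the discussion after it, every element of $C^*(T_\psi)$ has the form $T_\phi+k$ with $\phi\in C^*(\psi)$, so the image of $C^*(T_\psi)$ in the Calkin algebra is exactly $\{[T_\phi]:\phi\in C^{*}(\psi)\}\cong C([\mathbb{T}])$; meanwhile~(\ref{a1}), together with $\Sigma(x^a)=[C_{\rho_{\gamma,a}}]$ and the density of the span of $\{x^a:\mathrm{Re}\,a>0\}$ in $C_0([0,1])$, identifies the image of $C^*(C_\varphi,\mathfrak K)$ with $\mathcal{A}_{\gamma,\varphi^{'}(\gamma)}$ exactly as in Proposition~\ref{3.1a} (recall $\varphi^{'}(\gamma)>0$, so $\mathcal{A}_{\gamma,\varphi^{'}(\gamma)}$ is defined). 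Merging the two generating sets yields the displayed identity.

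Next I would check that $\mathcal{C}_{\gamma,\varphi^{'}(\gamma),\psi}$ is a dense $*$-subalgebra. Closure under linear combinations and adjoints is immediate, since $C^*(\psi)$ and $\mathcal{A}_{\gamma,\varphi^{'}(\gamma)}$ are self-adjoint and $T_{\overline{\phi}}=T_\phi^*$. For products I would expand $([T_{\phi_1}]+A_1)([T_{\phi_2}]+A_2)$ and treat the four terms separately: the cross terms collapse to $\phi_1(\gamma)A_2$ and $\phi_2(\gamma)A_1$ by the first part of Lemma~\ref{lem} and so remain in $\mathcal{A}_{\gamma,\varphi^{'}(\gamma)}$, while $[T_{\phi_1}][T_{\phi_2}]=[T_{\phi_1\phi_2}]$ with $\phi_1\phi_2\in C^*(\psi)$ because $T_{\phi_1\phi_2}-T_{\phi_1}T_{\phi_2}$ is compact for continuous symbols. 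Hence the product is again of the form $[T_\phi]+A$, and $\mathcal{C}_{\gamma,\varphi^{'}(\gamma),\psi}$ is dense in $C^*(T_\psi,C_\varphi)/\mathfrak K$ by the generation just established.

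The heart of the argument is the map $\mathcal{F}$. Under the identification $C([\mathbb{T}])\cong C^*(\psi)$, $\tilde f\mapsto f$ with $\tilde f([\gamma])=f(\gamma)$, the summand $C_{[\gamma]}([\mathbb{T}])$ corresponds to $\{f-f(\gamma):f\in C^*(\psi)\}$, so every element of $\mathcal{B}_{\gamma,\varphi^{'}(\gamma)}$ has a unique expression $(\tilde f-\tilde f([\gamma]),A)+\tilde f([\gamma])I$ with $f\in C^*(\psi)$ and $A\in\mathcal{A}_{\gamma,\varphi^{'}(\gamma)}$; I send it to $[T_f]+A$, which makes $\mathcal{F}$ well defined. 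The multiplicativity verification is the same bookkeeping as in Theorem~\ref{unitization}: the product in $\mathcal{B}_{\gamma,\varphi^{'}(\gamma)}$ has first component $\widetilde{f_1f_2}-\widetilde{f_1f_2}([\gamma])$ and second component $A_1A_2+f_1(\gamma)A_2+f_2(\gamma)A_1$, and applying $\mathcal{F}$ and then Lemma~\ref{lem} rewrites its image as $([T_{f_1}]+A_1)([T_{f_2}]+A_2)$. Injectivity is precisely the second half of Lemma~\ref{lem}: if $\mathcal{F}$ of an element is $[0]$, then $[T_f]+A=[0]$ forces $f\equiv0$ and $A=0$, whence the element itself is $0$.

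Finally, an injective $*$-homomorphism between $C^*$-algebras is isometric, so the range $\mathcal{F}(\mathcal{B}_{\gamma,\varphi^{'}(\gamma)})=\mathcal{C}_{\gamma,\varphi^{'}(\gamma),\psi}$ is closed; being also dense in $C^*(T_\psi,C_\varphi)/\mathfrak K$, it is the whole algebra, and $\mathcal{F}$ is the asserted $*$-isomorphism. I expect the main obstacle to be the bookkeeping in the first step rather than any new idea: one must make certain that replacing $C(\mathbb{T})$ by the genuine subalgebra $C^*(\psi)$ is compatible with evaluation at $\gamma$---that $\gamma$ descends to a well-defined point $[\gamma]\in[\mathbb{T}]$ with $f(\gamma)=\tilde f([\gamma])$---and that Lemma~\ref{lem}, though stated for $f\in C(\mathbb{T})$, applies verbatim to the smaller class $C^*(\psi)\subseteq C(\mathbb{T})$. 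Once these identifications are secured, the homomorphism and injectivity arguments transfer from Theorem~\ref{unitization} essentially unchanged.
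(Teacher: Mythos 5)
Your proposal is correct and follows essentially the same route as the paper: establish the generation identity via Proposition \ref{3.1a} and Coburn's theorem, show $\mathcal{C}_{\gamma,\varphi^{'}(\gamma),\psi}$ is a dense $*$-subalgebra using Lemma \ref{lem} together with $[T_{\phi_1}][T_{\phi_2}]=[T_{\phi_1\phi_2}]$, and then transfer the injective $*$-homomorphism $\mathcal{F}$ from Theorem \ref{unitization}, with injectivity from the second part of Lemma \ref{lem}. The details you add---that $\mathfrak K\subseteq C^*(T_\psi)$ by irreducibility, and that evaluation at $\gamma$ descends to $[\gamma]\in[\mathbb{T}]$ under the identification $C^*(\psi)\cong C([\mathbb{T}])$---are exactly the points the paper leaves implicit.
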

The following results are straightforward consequences of Theorems \ref{3,2}, \ref{irre} and Corollary \ref{coro13}.
\begin{corollary}
If $T_\psi$ is irreducible with symbol $\psi $ in $C(\mathbb{T})$ and $\rho$ is a parabolic non-automorphism self-map of $\mathbb{D}$ fixing $\gamma \in \mathbb{T}$ then,
$C^*(T_\psi,C_\varphi)/\mathfrak K$ is $*$-isomorphic to the minimal unitization of $C_{[\gamma]}([\mathbb{T}])\oplus C_0([0,1])$.
\end{corollary}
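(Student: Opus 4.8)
The plan is to specialize Theorem \ref{irre} to the parabolic case and then evaluate the group-dependent factor $\mathcal{A}_{\gamma,\varphi'(\gamma)}$ by hand. The one structural fact that sets the parabolic case apart is recorded in the preliminaries: a linear-fractional self-map $\rho$ fixing $\gamma\in\mathbb{T}$ is parabolic precisely when $\rho'(\gamma)=1$. Since $\rho$ is in particular a linear-fractional non-automorphism self-map of $\mathbb{D}$ fixing $\gamma$, Theorem \ref{irre} applies verbatim (with $\varphi=\rho$) and gives
$$C^*(T_\psi,C_\rho)/\mathfrak{K}\cong \mathcal{B}_{\gamma,\rho'(\gamma)}=\mathcal{B}_{\gamma,1},$$
which by definition is the minimal unitization of $C_{[\gamma]}([\mathbb{T}])\oplus\mathcal{A}_{\gamma,1}$.

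It then remains only to identify $\mathcal{A}_{\gamma,1}$ with $C_0([0,1])$. Here I would unwind the definition of $\mathcal{A}_{\gamma,t_1,\dots,t_n}$ in the degenerate case $n=1$, $t_1=1$: the generating set consists of the elements $\Sigma(g)[U_{\Psi_{\gamma,1^{m}}}]$ with $g\in C_0([0,1])$ and $m\in\mathbb{Z}$. Since $1^{m}=1$ for every $m$ and $\Psi_{\gamma,1}$ is the identity map of $\mathbb{D}$, the operator $C_{\Psi_{\gamma,1}}$ is the identity $I$, so its unitary part $U_{\Psi_{\gamma,1}}$ equals $I$ and each generator collapses to $\Sigma(g)$. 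Hence $\mathcal{A}_{\gamma,1}$ is the $C^*$-algebra generated by $\Sigma(C_0([0,1]))$, namely $\Sigma(C_0([0,1]))$ itself, and by Theorem \ref{3,2} the restriction of $\Sigma$ is a $*$-isomorphism of $C_0([0,1])$ onto it. This is exactly the identification $\mathcal{A}_{\gamma,1,\dots,1}\cong C_0([0,1])$ already noted before Proposition \ref{3.1a}. Combining with the display above yields that $C^*(T_\psi,C_\rho)/\mathfrak{K}$ is $*$-isomorphic to the minimal unitization of $C_{[\gamma]}([\mathbb{T}])\oplus C_0([0,1])$, as claimed.

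The only point that genuinely needs care — and the closest thing to an obstacle in an otherwise routine substitution — is resisting the temptation to read off the factor $\mathcal{A}_{\gamma,1}$ from Corollary \ref{coro13}: that corollary requires $\varphi'(\gamma)\neq1$ together with $\mathbb{Z}$-linear independence of the logarithms of the derivatives, and both hypotheses fail in the parabolic case where $\rho'(\gamma)=1$. The correct input is instead the trivial-group specialization just described, in which the acting group is $\{1\}$ and no crossed product appears; everything else follows by direct appeal to Theorems \ref{3,2} and \ref{irre}.
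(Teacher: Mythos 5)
Your proposal is correct and follows exactly the route the paper intends: the paper states this corollary as a straightforward consequence of Theorems \ref{3,2} and \ref{irre}, and your argument supplies precisely the missing details, namely substituting $\rho'(\gamma)=1$ into Theorem \ref{irre} and invoking the identification $\mathcal{A}_{\gamma,1}\cong C_0([0,1])$ already recorded before Proposition \ref{3.1a}. Your remark that Corollary \ref{coro13} is inapplicable here (its hypothesis $\varphi'(\gamma)\neq 1$ fails for parabolic maps) is a correct and worthwhile clarification of the paper's blanket citation.
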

\begin{corollary}
If $T_\psi$ is irreducible with symbol $\psi $ in $C(\mathbb{T})$ and $\varphi$ is linear-fractional non-automorphism self-map of $\mathbb{D}$  fixing $\gamma \in \mathbb{T}$ such that $\varphi^{'}(\gamma)\neq1$, then
$C^*(T_\psi,C_\varphi)/\mathfrak K$ is $*$-isomorphic to the minimal unitization of $C_{[\gamma]}([\mathbb{T}])\oplus (C_0([0,1])\rtimes_\alpha\mathbb{Z})$,
where the action $\alpha$ is defined as in Corollary \ref {coro13}.
\end{corollary}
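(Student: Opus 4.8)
The plan is to derive the corollary by inserting a concrete description of the summand $\mathcal{A}_{\gamma,\varphi'(\gamma)}$ into the isomorphism already supplied by Theorem \ref{irre}. That theorem gives, with no further hypothesis on $\varphi'(\gamma)$, a $*$-isomorphism of $C^*(T_\psi,C_\varphi)/\mathfrak K$ onto $\mathcal{B}_{\gamma,\varphi'(\gamma)}$, the minimal unitization of $C_{[\gamma]}([\mathbb{T}])\oplus \mathcal{A}_{\gamma,\varphi'(\gamma)}$. Hence all that remains is to replace the non-unital summand $\mathcal{A}_{\gamma,\varphi'(\gamma)}$ by the crossed product named in the statement and to check that this replacement is compatible with forming the direct sum and then the minimal unitization.

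First I would translate the hypothesis. Because $\varphi$ is a linear-fractional non-automorphism self-map fixing $\gamma$, one has $\varphi'(\gamma)>0$, so $\varphi'(\gamma)\neq 1$ is equivalent to $\ln\varphi'(\gamma)\neq 0$; a single nonzero real number is trivially linearly independent over $\mathbb{Z}$. Thus the $n=1$ instance of the ``moreover'' part of Proposition \ref{3.1a} (which itself rests on Corollary \ref{coro13}) applies and produces a $*$-isomorphism
$$\Theta:\mathcal{A}_{\gamma,\varphi'(\gamma)}\longrightarrow C_0([0,1])\rtimes_\alpha\mathbb{Z},$$
where $\alpha$ is exactly the action $\alpha_n(f)(x)=f(x^{\varphi'(\gamma)^n})$ of Corollary \ref{coro13}. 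I would pause here only to confirm that the $n=1$ action $\alpha'$ on $\mathbb{Z}^1$ in Proposition \ref{3.1a} is literally the action $\alpha$ on $\mathbb{Z}$ appearing in the statement, which it is.

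It then remains to promote $\Theta$ to the full algebra. I would form the $*$-isomorphism
$$\mathrm{id}\oplus\Theta:C_{[\gamma]}([\mathbb{T}])\oplus\mathcal{A}_{\gamma,\varphi'(\gamma)}\longrightarrow C_{[\gamma]}([\mathbb{T}])\oplus(C_0([0,1])\rtimes_\alpha\mathbb{Z}),$$
and invoke functoriality of the minimal unitization: a $*$-isomorphism of non-unital $C^*$-algebras extends uniquely to a unital $*$-isomorphism of their minimal unitizations. Since $C_{[\gamma]}([\mathbb{T}])$ (the functions on $[\mathbb{T}]$ vanishing at $[\gamma]$) and $\mathcal{A}_{\gamma,\varphi'(\gamma)}$ are both non-unital, their direct sum is non-unital and the minimal unitization is well defined, so this step is legitimate. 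Composing with the isomorphism of Theorem \ref{irre} yields the asserted $*$-isomorphism.

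I do not expect a genuine obstacle: the corollary is a packaging of Theorem \ref{irre} together with Corollary \ref{coro13}, and the substantive work of establishing the direct-sum decomposition has already been carried out inside Theorem \ref{irre}. The only points demanding (routine) care are the equivalence $\varphi'(\gamma)\neq1 \Leftrightarrow \ln\varphi'(\gamma)\neq 0$ used to trigger the $n=1$ crossed-product identification, the matching of the two descriptions of the action $\alpha$, and the functoriality of the minimal unitization for $*$-isomorphisms; none of these introduces anything of substance beyond the results already in hand.
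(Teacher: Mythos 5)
Your proposal is correct and follows essentially the same route as the paper, which presents this corollary as a straightforward consequence of Theorem \ref{irre} together with Corollary \ref{coro13} (via the ``moreover'' clause of Proposition \ref{3.1a} in the case $n=1$). The points you flag for care --- the equivalence of $\varphi'(\gamma)\neq 1$ with $\mathbb{Z}$-linear independence of $\ln\varphi'(\gamma)$, the matching of the actions, and functoriality of the minimal unitization --- are exactly the routine checks the paper leaves implicit.
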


\begin{corollary}
If $T_\psi$ is irreducible with symbol $\psi$ in $C(\mathbb{T})$ and $ \varphi _1,...,\varphi_n $ are linear-fractional non-automorphism self-maps of $\mathbb{D}$ fixing $\gamma \in \mathbb{T}$ such that $\ln \varphi_1^{'}(\gamma),...,\ln \varphi_n^{'}(\gamma)$ are linearly independent over $\mathbb{Z}$,
then $C^*(T_\psi,C_{\varphi _1},...,C_{\varphi _n})/\mathfrak K $ is $*$-isomorphic to the minimal unitization of $C_{[\gamma]}([\mathbb{T}])\oplus( C_0([0,1])\rtimes_{\alpha^{'}} \mathbb{Z}^{n})$.
\end{corollary}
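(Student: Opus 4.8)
The plan is to run the argument of Theorem~\ref{irre} with $n$ composition operators in place of one, and then feed in the crossed-product description of $\mathcal{A}_{\gamma,\varphi_1'(\gamma),...,\varphi_n'(\gamma)}$ supplied by Corollary~\ref{coro13}. First I would record the $n$-map analogue of equation~(\ref{b1}). Since $C^*(T_\psi)$ is irreducible it contains a nonzero compact operator, hence all of $\mathfrak K$, so $C^*(T_\psi,C_{\varphi_1},...,C_{\varphi_n})$ contains $\mathfrak K$ as well; passing to the quotient and invoking (\ref{a0}) together with the identification $C^*(T_\psi)/\mathfrak K=\{[T_\phi]:\phi\in C^*(\psi)\}$ coming from Theorem~\ref{1co69} and Theorem~6 of \cite{co69}, I obtain
$$C^*(T_\psi,C_{\varphi_1},...,C_{\varphi_n})/\mathfrak K=C^*(\{[T_\phi]:\phi\in C^*(\psi)\}\cup\mathcal{A}_{\gamma,\varphi_1'(\gamma),...,\varphi_n'(\gamma)}).$$

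Next I would set $\mathcal{C}_{\gamma,t_1,...,t_n,\psi}:=\{[T_\phi]+A:\phi\in C^*(\psi),\,A\in\mathcal{A}_{\gamma,t_1,...,t_n}\}$ and check that it is a dense $*$-subalgebra. Closure under adjoints and linear combinations is immediate, while closure under products follows from $[T_{\phi_1}][T_{\phi_2}]=[T_{\phi_1\phi_2}]$ and the commutation relation $[T_\phi]A=\phi(\gamma)A=A[T_\phi]$ of Lemma~\ref{lem}; note that Lemma~\ref{lem} applies verbatim, since $C^*(\psi)\subseteq C(\mathbb{T})$ and $\mathcal{A}_{\gamma,t_1,...,t_n}$ is already the multi-map algebra. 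I would then define $\mathcal{F}$ from the minimal unitization $\mathcal{B}$ of $C_{[\gamma]}([\mathbb{T}])\oplus\mathcal{A}_{\gamma,\varphi_1'(\gamma),...,\varphi_n'(\gamma)}$ onto $\mathcal{C}_{\gamma,\varphi_1'(\gamma),...,\varphi_n'(\gamma),\psi}$ exactly as in Theorem~\ref{irre}, using the identification $C^*(T_\psi)/\mathfrak K\cong C([\mathbb{T}])$, $[T_\phi]\leftrightarrow\tilde{\phi}$, under which $\tilde{\phi}([\gamma])=\phi(\gamma)$. The multiplicativity computation is formally the one carried out in Theorem~\ref{unitization} with $f_1(\gamma),f_2(\gamma)$ replaced by $\phi_1(\gamma),\phi_2(\gamma)$, and injectivity is the second part of Lemma~\ref{lem}. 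Being an injective $*$-homomorphism, $\mathcal{F}$ is isometric with closed range, which gives
$$C^*(T_\psi,C_{\varphi_1},...,C_{\varphi_n})/\mathfrak K\cong\mathcal{B}.$$

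Finally, under the hypothesis that $\ln\varphi_1'(\gamma),...,\ln\varphi_n'(\gamma)$ are linearly independent over $\mathbb{Z}$, the last statement of Proposition~\ref{3.1a} (equivalently Corollary~\ref{coro13}) provides a $*$-isomorphism $\mathcal{A}_{\gamma,\varphi_1'(\gamma),...,\varphi_n'(\gamma)}\cong C_0([0,1])\rtimes_{\alpha'}\mathbb{Z}^n$. Since forming a direct sum with a fixed summand and then taking the minimal unitization are functorial, this isomorphism extends to an isomorphism of $\mathcal{B}$ with the minimal unitization of $C_{[\gamma]}([\mathbb{T}])\oplus(C_0([0,1])\rtimes_{\alpha'}\mathbb{Z}^n)$, which is the assertion.

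I expect the only genuine content, as opposed to bookkeeping, to lie in the two inputs that are already black-boxed: the commutation/faithfulness Lemma~\ref{lem}, which is what makes $\mathcal{F}$ a well-defined injective homomorphism, and Corollary~\ref{coro13}, where the linear-independence hypothesis is precisely what is needed to realise $\mathcal{A}_{\gamma,\varphi_1'(\gamma),...,\varphi_n'(\gamma)}$ as a full $\mathbb{Z}^n$-crossed product. The main point to keep straight is that here all Toeplitz symbols live in the proper subalgebra $C^*(\psi)$, so the relevant quotient is $C([\mathbb{T}])$ rather than $C(\mathbb{T})$, and evaluation must be read through the equivalence class $[\gamma]$ rather than through the point $\gamma$; apart from this, the proof is the word-for-word generalization of Theorem~\ref{irre}.
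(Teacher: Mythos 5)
Your proposal is correct and follows exactly the route the paper intends: the paper states this corollary as a "straightforward consequence" of Theorem \ref{irre} and Corollary \ref{coro13}, and your argument is precisely the $n$-map generalization of Theorem \ref{irre} (via equation (\ref{a0}) and Lemma \ref{lem}, which is indeed already stated for the multi-map algebra) followed by the identification of $\mathcal{A}_{\gamma,\varphi_1'(\gamma),\dots,\varphi_n'(\gamma)}$ with $C_0([0,1])\rtimes_{\alpha'}\mathbb{Z}^n$. No gaps; you have merely written out details the paper leaves implicit.
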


Now consider the case that $\varphi$ is a linear-fractional non-automorphism self-map of $\mathbb{D}$ such that $\varphi(\gamma)=\eta$ for some $\varphi\neq\eta \in \mathbb{T}$. Kriete, MacCluer and Moorhouse investigated this case in \cite{kmm07}. We summarize their results as follows.
\begin{theorem} \cite{kmm07} \label {kmm,1}
Let $\varphi$ be a linear-fractional non-automorphism self-map of $\mathbb{D}$ with $\varphi(\gamma)=\eta$ for some $\varphi\neq\eta \in \mathbb{T}$. Then for every $a\in \mathcal{A}=C^*(T_z,C_\varphi)/\mathfrak K$ there is a unique $\omega\in C(\mathbb{T})$ and unique functions $f,g,h$ and $k$ in $C_0([0,1])$ such that
$$a=[T_\omega]+f([C_\varphi^* C_\varphi])+g([C_\varphi C_\varphi^*])+[U_\varphi]h([C_\varphi^* C_\varphi])+[U_\varphi^*]k([C_\varphi C_\varphi^*]).$$
Moreover the map $\Phi : \mathcal{A}\rightarrow C(\mathbb{T})\oplus \mathbb{M}_2(C([0,1]))$ defined by
$$\Phi(a)=\left(\omega,\left[
                   \begin{array}{cc}
                     \omega(\gamma)+g & h \\
                     k & \omega(\eta)+f \\
                   \end{array}
                 \right]
\right) $$
is a $*$-isomorphism of $\mathcal{A}$ onto the following $C^*$-subalgebra of $C(\mathbb{T})\oplus \mathbb{M}_2(C([0,1]))$
$$\mathcal{D}=\left\{(\omega,V)\in C(\mathbb{T})\oplus \mathbb{M}_2(C([0,1])):V(0)=\left[
                                                                       \begin{array}{cc}
                                                                         \omega(\gamma) & 0 \\
                                                                         0 & \omega(\eta) \\
                                                                       \end{array}
                                                                     \right]
     \right\}.$$
\end{theorem}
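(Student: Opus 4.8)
The plan is to realise $\mathcal{A}=C^*(T_z,C_\varphi)/\mathfrak K$ through a one–dimensional ``Toeplitz symbol'' representation together with a one–parameter family of two–dimensional representations, and to package these into the single map $\Phi$. Throughout write $B=[C_\varphi^*C_\varphi]$, $B'=[C_\varphi C_\varphi^*]$ and $u=[U_\varphi]$. Two inputs drive everything. The algebraic input is the absorption behaviour of Toeplitz operators: taking adjoints in Theorem~\ref{3.1} gives $T_\omega C_\varphi^*\equiv\omega(\eta)C_\varphi^*$ and $C_\varphi T_\omega\equiv\omega(\eta)C_\varphi$ modulo $\mathfrak K$, while on the other side $C_\varphi^* C_\varphi T_\omega$ and $T_\omega C_\varphi C_\varphi^*$ are governed by $\omega(\eta)$ and $\omega(\gamma)$ respectively. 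The analytic input, obtained from Cowen's adjoint formula $C_\varphi^*=T_g C_\sigma T_h^*$ (with $\sigma$ the associated linear–fractional map sending $\eta$ to $\gamma$) combined with Theorem~\ref{3.1}, is that modulo $\mathfrak K$ the positive operators $C_\varphi^*C_\varphi$ and $C_\varphi C_\varphi^*$ reduce to scalar multiples of composition operators fixing $\eta$ and $\gamma$; by the boundary–fixed–point theory behind Theorem~\ref{3,2} their essential spectra are, after normalisation, the interval $[0,1]$. Hence the continuous functional calculus furnishes $*$–isomorphisms $C_0([0,1])\to C^*(B)$, $f\mapsto f(B)$, and $C_0([0,1])\to C^*(B')$, $g\mapsto g(B')$, which is what makes the five terms in the statement meaningful.

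Next I record the multiplication table. Iterating the absorption rules and passing to the functional calculus (exactly as in Lemma~\ref{lem}) yields
\[
[T_\omega]\,f(B)=f(B)\,[T_\omega]=\omega(\eta)\,f(B),\qquad
[T_\omega]\,g(B')=g(B')\,[T_\omega]=\omega(\gamma)\,g(B').
\]
The polar decomposition $C_\varphi=U_\varphi(C_\varphi^*C_\varphi)^{1/2}$ gives $C_\varphi C_\varphi^*=U_\varphi(C_\varphi^*C_\varphi)U_\varphi^*$, so $B'=uBu^*$ with $u$ unitary, whence $u\,f(B)\,u^*=f(B')$ for all $f$. The crucial disjointness relation $BB'=B'B=0$ is now purely algebraic: choosing $\omega\in C(\mathbb{T})$ with $\omega(\eta)=1$ and $\omega(\gamma)=0$ gives $B=B[T_\omega]$ and $[T_\omega]B'=0$, so $BB'=B[T_\omega]B'=0$ (this is where the distinctness $\gamma\neq\eta$ enters decisively). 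These relations show that the set $\mathcal S$ of operators $[T_\omega]+f(B)+g(B')+u\,h(B)+u^{*}k(B')$ is closed under products and adjoints, hence a dense $*$–subalgebra of $\mathcal A$.

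With the table available, for each $x\in[0,1]$ define a two–dimensional $*$–representation $\pi_x$ on $\mathbb{C}^2$ by
\[
\pi_x([T_\omega])=\begin{bmatrix}\omega(\gamma)&0\\0&\omega(\eta)\end{bmatrix},\quad
\pi_x(B)=\begin{bmatrix}0&0\\0&x\end{bmatrix},\quad
\pi_x(B')=\begin{bmatrix}x&0\\0&0\end{bmatrix},\quad
\pi_x(u)=\begin{bmatrix}0&1\\1&0\end{bmatrix}.
\]
One checks directly that $\pi_x$ respects every relation above: the swap $\pi_x(u)$ conjugates $\pi_x(B)$ to $\pi_x(B')$, the terms $\pi_x(u)h(B)$ and $\pi_x(u^{*})k(B')$ land in the $(1,2)$ and $(2,1)$ slots, and $\pi_x(B)\pi_x(B')=0$. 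Consequently
\[
\pi_x\bigl([T_\omega]+f(B)+g(B')+u\,h(B)+u^{*}k(B')\bigr)
=\begin{bmatrix}\omega(\gamma)+g(x)&h(x)\\ k(x)&\omega(\eta)+f(x)\end{bmatrix}=V(x),
\]
and since $f,g,h,k\in C_0([0,1])$ vanish at $0$ we get $V(0)=\operatorname{diag}(\omega(\gamma),\omega(\eta))$, the defining constraint of $\mathcal D$. The first component is the quotient of $\mathcal A$ by the ideal $J$ generated by $B$ and $B'$; as $[C_\varphi]=u\,B^{1/2}\in J$, this quotient is a quotient of $C^*(T_z)/\mathfrak K\cong C(\mathbb T)$, and the injectivity below shows it is all of $C(\mathbb T)$, giving $a\mapsto\omega$. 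Pairing $a\mapsto\omega$ with $a\mapsto(x\mapsto\pi_x(a))$ produces $\Phi:\mathcal A\to C(\mathbb T)\oplus M_2(C([0,1]))$ of the statement.

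Finally $\Phi$ is a $*$–isomorphism onto $\mathcal D$: it is a $*$–homomorphism by the computation above, and $V$ reconstructs the data via $g=V_{11}-V_{11}(0)$, $f=V_{22}-V_{22}(0)$, $h=V_{12}$, $k=V_{21}$ while the symbol recovers $\omega$, so $\Phi(a)=0$ forces $\omega\equiv0$ and $f=g=h=k=0$, giving both injectivity and the asserted uniqueness. Surjectivity is immediate, since for $(\omega,V)\in\mathcal D$ the entries $V_{12},V_{21}$ vanish at $0$ and $V_{11}-\omega(\gamma),V_{22}-\omega(\eta)\in C_0([0,1])$ assemble a preimage. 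As in the proof of Theorem~\ref{unitization}, it is cleanest to run the argument with the map written from the complete algebra $\mathcal D$ into $\mathcal A$, so that an injective $*$–homomorphism is automatically isometric with closed range; the range, being dense, is all of $\mathcal A$. The main obstacle is the analytic input of the first paragraph: proving that $C_\varphi^*C_\varphi$ and $C_\varphi C_\varphi^*$ reduce modulo $\mathfrak K$ to multiples of composition operators fixing $\eta$ and $\gamma$, and that their essential spectra are the full interval $[0,1]$, which is what licenses the $C_0([0,1])$ functional calculus; this rests on Cowen's adjoint formula and the boundary–fixed–point composition operator theory underlying Theorem~\ref{3,2}, after which the relation $BB'=0$ and the remaining algebra are routine.
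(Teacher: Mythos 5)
First, a point of reference: the paper offers no proof of this theorem at all. It is quoted from Kriete--MacCluer--Moorhouse \cite{kmm07} (``We summarize their results as follows''), so your attempt has to be measured against the argument of \cite{kmm07} rather than against anything in this paper. At the level of architecture you have reconstructed that argument's skeleton correctly: the absorption identities of Theorem \ref{3.1} and their adjoints, the resulting two-sided commutation table $[T_\omega]f(B)=f(B)[T_\omega]=\omega(\eta)f(B)$ and $[T_\omega]g(B')=g(B')[T_\omega]=\omega(\gamma)g(B')$, the orthogonality $BB'=0$ (your derivation by inserting a symbol with $\omega(\eta)=1$ and $\omega(\gamma)=0$ is correct and isolates exactly where $\gamma\neq\eta$ is used), the conjugation $uBu^*=B'$, and the observation that the five types of terms span a dense $*$-subalgebra.

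The genuine gap is the passage from this multiplication table to the isomorphism. Exhibiting matrices $\pi_x([T_\omega])$, $\pi_x(B)$, $\pi_x(B')$, $\pi_x(u)$ that satisfy the listed relations does not yield a representation of $\mathcal{A}$ unless you already know that those relations form a complete presentation of $\mathcal{A}$ --- and that is equivalent to the uniqueness assertion you are trying to prove. Your injectivity step inherits this circularity: ``$\Phi(a)=0$ forces $\omega\equiv0$ and $f=g=h=k=0$'' recovers the data from the matrix $V$, but it presupposes that $a\mapsto(\omega,f,g,h,k)$ is well defined, i.e.\ that the five-term decomposition exists and is unique, which is precisely the first claim of the theorem. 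Reversing the arrow, as you propose at the end, and mapping $\mathcal{D}$ onto $\mathcal{A}$ cures the well-definedness problem but leaves the injectivity of that surjection as the entire analytic content: one must exclude, for instance, that some nonzero $[U_\varphi]h([C_\varphi^*C_\varphi])$ agrees modulo compacts with an element of $C^*(\{[T_\omega]\}\cup\{B,B'\})$. In \cite{kmm07} this joint faithfulness is the bulk of the work and rests on Cowen's adjoint formula together with explicit unitary models of the localizations at $\gamma$ and at $\eta$; it does not follow from the algebraic relations alone, and your closing sentence misidentifies the main obstacle as the computation of the essential spectra. Two smaller issues: $U_\varphi$ itself need not belong to $C^*(T_z,C_\varphi)$ --- only the products $U_\varphi h(C_\varphi^*C_\varphi)$ with $h\in C_0([0,1])$ do, via $h(t)=t^{1/2}p(t)$ and $U_\varphi(C_\varphi^*C_\varphi)^{1/2}p(C_\varphi^*C_\varphi)=C_\varphi p(C_\varphi^*C_\varphi)$, so $\pi_x(u)$ is not the image of an element of $\mathcal{A}$; and the claim that the essential spectra of $C_\varphi^*C_\varphi$ and $C_\varphi C_\varphi^*$ are exactly $[0,1]$ requires a normalization and a computation you only gesture at.
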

We replace the shift operator by an arbitrary irreducible Toeplitz operator $T_\psi$ with continuous symbol.
\begin{theorem}
Let $\varphi$ be a linear-fractional non-automorphism self-map of $\mathbb{D}$ such that $\varphi(\gamma)=\eta$ for distinct points $\gamma,\eta\in \mathbb{T} $ and $T_\psi$ be irreducible with continuous symbol $\psi$ on $\mathbb{T}$. Then every element $b$ in $\mathcal{B}=C^*(T_\psi,C_\varphi)/\mathfrak K$ has a unique representation of the form
$$b=[T_\omega]+f([C_\varphi^* C_\varphi])+g([C_\varphi C_\varphi^*])+[U_\varphi]h([C_\varphi^* C_\varphi])+[U_\varphi^*]k([C_\varphi C_\varphi^*])$$
where $\omega\in C^*(\psi)$ and $f,g,h$ and $k$ are in $C_0([0,1])$. Moreover $\mathcal{B}$ is $*$-isomorphic to the $C^*$-subalgebra $\mathcal{D}$ of $C([\mathbb{T}])\oplus M_2(C([0,1]))$ defined by
$$\mathcal{D}=\left\{(f,S)\in C([\mathbb{T}])\oplus \mathbb{M}_2(C([0,1])): S(0)=\left[
                                                            \begin{array}{cc}
                                                              f([\gamma]) & 0 \\
                                                              0 & f([\eta]) \\
                                                            \end{array}
                                                          \right]
\right\}.$$
\begin{proof}
Since $\mathcal{B}$ is a $C^*$-subalgebra of $C^*(T_z,C_\varphi)/\mathfrak K$, by Theorem \ref{kmm,1}, for every element $b\in \mathcal{B}$ there is a unique $\omega\in C(\mathbb{T})$ and unique functions $f,g,h$ and $k$ in $C_0([0,1])$ such that
$$b=[T_\omega]+f([C_\varphi^* C_\varphi])+g([C_\varphi C_\varphi^*])+[U_\varphi]h([C_\varphi^* C_\varphi])+[U_\varphi^*]k([C_\varphi C_\varphi^*]).$$
We show that $\omega \in C^*(\psi)$. By Theorem \ref{3.1}, for each $\varepsilon >0$, there is an element $b_\varepsilon\in \mathcal{B}$ such that $\|b_\varepsilon-b\|<\varepsilon$ and $b_\varepsilon=p([T_\psi],[T_\psi^*])+q([C_\varphi],[C_\varphi^*])$, for some polynomials $p$ and $q$. It is straightforward to show that $p([T_\psi],[T_\psi^*])=[T_{p(\psi,\bar{\psi})}]$. Hence by Theorem \ref{kmm,1}, there are unique functions $f_\varepsilon,g_\varepsilon,h_\varepsilon$ and $k_\varepsilon$ in $C_0([0,1])$ such that
$$b_\varepsilon-b=[T_{\omega-p(\psi.\bar{\psi})}]+f_\varepsilon([C_\varphi^* C_\varphi])+g_\varepsilon([C_\varphi C_\varphi^*])+[U_\varphi]h_\varepsilon([C_\varphi^* C_\varphi])$$$$+[U_\varphi^*]k_\varepsilon([C_\varphi C_\varphi^*])$$
and
$$\left\|\left(\omega-p(\psi,\bar{\psi}),\left[
                                 \begin{array}{cc}
                                   \omega(\gamma)+g_\varepsilon & h_\varepsilon \\
                                   k_\varepsilon & \omega(\eta)+f_\varepsilon \\
                                 \end{array}
                               \right]
\right)\right\|=\|b_\varepsilon-b\|<\varepsilon.$$
Thus $\|\omega-p(\psi,\bar{\psi})\|<\varepsilon$ and $\omega \in C^*(\psi)$. By Theorem \ref{kmm,1}, $\mathcal{B}$ is $*$-isomorphic to the following $C^*$-subalgebra of $C^*(\psi)\oplus M_2(C([0,1]))$
$$\mathcal{C}=\left\{(\omega,S)\in C^*(\psi)\oplus \mathbb{M}_2(C([0,1])): S(0)=\left[
                                                            \begin{array}{cc}
                                                              \omega(\gamma) & 0 \\
                                                              0 & \omega(\eta) \\
                                                            \end{array}
                                                          \right]
\right\}.$$
Hence by Theorem \ref{1co69}, $\mathcal{B}$ is $*$-isomorphic to
$$\mathcal{D}=\left\{(f,S)\in C([\mathbb{T}])\oplus \mathbb{M}_2(C([0,1])): S(0)=\left[
                                                            \begin{array}{cc}
                                                              f([\gamma]) & 0 \\
                                                              0 & f([\eta]) \\
                                                            \end{array}
                                                          \right]
\right\}.$$
\end{proof}

\end{theorem}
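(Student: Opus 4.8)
The plan is to realize $\mathcal{B}=C^*(T_\psi,C_\varphi)/\mathfrak K$ as a $C^*$-subalgebra of $\mathcal{A}=C^*(T_z,C_\varphi)/\mathfrak K$ and then to cut the Kriete--MacCluer--Moorhouse structure theorem (Theorem \ref{kmm,1}) down to it. First I would note that, since $\psi\in C(\mathbb{T})$, Coburn's theorem gives $T_\psi\in C^*(T_z)$, whence $C^*(T_\psi,C_\varphi)\subseteq C^*(T_z,C_\varphi)$ and, passing to the Calkin algebra, $\mathcal{B}\subseteq\mathcal{A}$. Consequently every $b\in\mathcal{B}$ already carries the representation of Theorem \ref{kmm,1} with $\omega\in C(\mathbb{T})$ and $f,g,h,k\in C_0([0,1])$, and its \emph{uniqueness} in $\mathcal{B}$ is inherited for free from uniqueness in the larger algebra $\mathcal{A}$. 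The whole content of the first assertion thus collapses to the single refinement $\omega\in C^*(\psi)$.

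To establish this I would run a density-and-reduction argument. A dense $*$-subalgebra of $\mathcal{B}$ consists of noncommutative polynomials in $[T_\psi],[T_\psi^*],[C_\varphi],[C_\varphi^*]$, so I fix $\varepsilon>0$ and choose such a $b_\varepsilon$ with $\|b_\varepsilon-b\|<\varepsilon$. The key mechanism is the absorption relations of Theorem \ref{3.1}: modulo compacts one has $[T_f][C_\varphi]=f(\gamma)[C_\varphi]$ and $[C_\varphi][T_f]=f(\eta)[C_\varphi]$, together with the adjoint forms governing $[C_\varphi^*]$. Applying these repeatedly, every monomial containing at least one composition factor collapses to a scalar multiple of a word built solely from $[C_\varphi]$ and $[C_\varphi^*]$, while monomials built solely from $[T_\psi],[T_\psi^*]$ collapse to $[T_{p(\psi,\bar\psi)}]$ via the Coburn relation $[T_f][T_g]=[T_{fg}]$. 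Since every composition-only word has vanishing Toeplitz part in the normal form of Theorem \ref{kmm,1}, the Toeplitz symbol of $b_\varepsilon$ is exactly some $p(\psi,\bar\psi)\in C^*(\psi)$. Now I invoke that $\Phi$ is a $*$-isomorphism, hence isometric: the first coordinate of $\Phi(b_\varepsilon-b)$ is the Toeplitz symbol $p(\psi,\bar\psi)-\omega$, and the direct-sum norm dominates each coordinate, so
$$\|p(\psi,\bar\psi)-\omega\|_\infty\le\|\Phi(b_\varepsilon-b)\|=\|b_\varepsilon-b\|<\varepsilon.$$
As $\varepsilon$ is arbitrary and $C^*(\psi)$ is closed, $\omega\in C^*(\psi)$, which settles the representation claim.

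For the structural statement, restricting $\Phi$ to $\mathcal{B}$ yields a $*$-isomorphism onto
$$\mathcal{C}=\left\{(\omega,S)\in C^*(\psi)\oplus M_2(C([0,1])):S(0)=\begin{bmatrix}\omega(\gamma)&0\\0&\omega(\eta)\end{bmatrix}\right\},$$
because the Toeplitz symbols now range exactly over $C^*(\psi)$, while the composition data $f,g,h,k$ remain unconstrained in $C_0([0,1])$ since the operators $[C_\varphi^*C_\varphi]$, $[C_\varphi C_\varphi^*]$, $[U_\varphi]$ all lie in $C^*(C_\varphi)/\mathfrak K\subseteq\mathcal{B}$; this gives both inclusions $\Phi(\mathcal{B})=\mathcal{C}$. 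Finally I apply Coburn's theorem (Theorem \ref{1co69}) to the irreducible algebra $C^*(T_\psi)$, obtaining a $*$-isomorphism $C^*(\psi)\cong C([\mathbb{T}])$, $\omega\mapsto\tilde\omega$, under which $\omega(\gamma)=\tilde\omega([\gamma])$ and $\omega(\eta)=\tilde\omega([\eta])$. Applying this isomorphism to the first coordinate of each pair transports $\mathcal{C}$ onto the asserted algebra $\mathcal{D}$, finishing the proof.

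I expect the main obstacle to be the reduction step: verifying that \emph{every} polynomial in the four generators has Toeplitz part lying in $C^*(\psi)$, i.e.\ that the mixed words genuinely contribute nothing to $\omega$. This hinges on a careful inductive use of the absorption identities of Theorem \ref{3.1} and on the fact that purely compositional words carry zero Toeplitz component in the Kriete--MacCluer--Moorhouse normal form; once that is in hand, the isometry of $\Phi$ and Coburn's identification do the remaining bookkeeping.
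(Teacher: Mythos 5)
Your proposal is correct and follows essentially the same route as the paper's own proof: reduce to the Kriete--MacCluer--Moorhouse normal form on the larger algebra $C^*(T_z,C_\varphi)/\mathfrak K$, use the absorption relations of Theorem \ref{3.1} to approximate $b$ by elements of the form $p([T_\psi],[T_\psi^*])+q([C_\varphi],[C_\varphi^*])$, deduce $\omega\in C^*(\psi)$ from the isometry of $\Phi$, and then transport the first coordinate via Coburn's identification $C^*(\psi)\cong C([\mathbb{T}])$. You in fact supply slightly more detail than the paper on why mixed words collapse and why $\Phi(\mathcal{B})$ is all of $\mathcal{C}$.
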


\section{composition operators with automorphism symbols} \label{t}
A self-map $\varphi$ of the unit disk $\mathbb{D}$ is an automorphism if $\varphi$ is a one-to-one holomorphic  map of $\mathbb{D}$ onto
$\mathbb{D}$. We denote the class of automorphisms of $\mathbb{D}$ by $Aut(\mathbb{D})$. A well-know consequence of Schwarz Lemma shows
that every element $\varphi \in Aut(\mathbb{D})$ has the form
\begin{equation} \label {Auto}
\varphi(z)=\omega \frac{s-z}{1-\bar{s}z},
\end{equation}
for some $\omega \in \mathbb{T}$, where $s=\varphi^{-1}(0)\in \mathbb{D}$.

A {\it Fuchsian group} $\Gamma$ is a discrete group of automorphisms of $\mathbb{D}$. Fix a point $z_{0}\in \mathbb{D}$. The limit set of
$\Gamma$ is the set of limit points of the orbit $\{\varphi(z_0):\varphi \in \Gamma \}$ in $\mathbb{D}$. This
is a closed subset of the unit circle and does not depend on the choice of $z_0$. The limit set of a Fuchsian group has either 0,1,2, or infinitely
many elements. When the limit set is infinite, it is  perfect and nowhere dense (hence uncountable). A Fuchsian group $\Gamma$ is called {\it non-elementary} if its limit set is infinite.

Jury in \cite{Ju07-1} describes the $C^*$-algebra $C^*(\{C_{\varphi}:\varphi \in \Gamma\})/\mathfrak K$ when $\Gamma$ is a non-elementary Fuchsian group.
A basic point in the proof is that the non-elementary condition on $\Gamma$ guarantees that the $C^*$-algebra $C^*(\{C_{\varphi}:\varphi \in \Gamma\})$ contains the unilateral shift $T_z$. In the next proposition we weaken
this condition on  $\Gamma$ and still get the shift operator. We need the following lemma.
\begin{lemma} \cite [Lemma 3.2]{Ju07-1}\label{Jury lemma}
If $\varphi$ is an automorphism of $\mathbb{D}$ with $a=\varphi^{-1}(0)$ and
$$f(z)=\frac{1-\bar{a}z}{(1-|a|^2)^{1/2}},$$
then $C_\varphi C_\varphi^*=T_zT_z^*$.
\end{lemma}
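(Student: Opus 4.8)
The plan is to verify the operator identity on the reproducing kernels of $H^2$, which are total, so that agreement there forces equality of the two bounded operators. (Note that, given the data, the identity to be proved is $C_\varphi C_\varphi^* = T_f T_f^*$: the function $f$ is introduced precisely for this, and for $\varphi=\mathrm{id}$, where $a=0$ and $f\equiv1$, one indeed has $C_\varphi C_\varphi^*=I=T_fT_f^*$.) Recall that for $w\in\mathbb{D}$ the reproducing kernel is $K_w(z)=(1-\bar{w}z)^{-1}$, that $\{K_w:w\in\mathbb{D}\}$ has dense span in $H^2$, and that any analytic self-map acts on kernels by $C_\varphi^* K_w=K_{\varphi(w)}$. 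First I would use this to compute, for each $w$,
$$(C_\varphi C_\varphi^* K_w)(z)=(C_\varphi K_{\varphi(w)})(z)=K_{\varphi(w)}(\varphi(z))=\frac{1}{1-\overline{\varphi(w)}\,\varphi(z)}.$$

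The heart of the argument is a M\"obius identity. Writing $\varphi(z)=\omega(a-z)/(1-\bar{a}z)$ with $|\omega|=1$ and $a=\varphi^{-1}(0)$, I would expand the numerator of $1-\overline{\varphi(w)}\varphi(z)$ over the common denominator $(1-a\bar{w})(1-\bar{a}z)$ and check the cancellation
$$1-\overline{\varphi(w)}\,\varphi(z)=\frac{(1-|a|^2)(1-\bar{w}z)}{(1-a\bar{w})(1-\bar{a}z)};$$
the factor $|\omega|^2=1$ removes the $\omega$-dependence, and the surviving numerator collapses to $(1-|a|^2)(1-\bar{w}z)$. Inverting and regrouping then gives
$$(C_\varphi C_\varphi^* K_w)(z)=\frac{(1-\bar{a}z)(1-a\bar{w})}{1-|a|^2}\,K_w(z)=f(z)\,\overline{f(w)}\,K_w(z),$$
since $f(z)=(1-\bar{a}z)/(1-|a|^2)^{1/2}$ and hence $\overline{f(w)}=(1-a\bar{w})/(1-|a|^2)^{1/2}$.

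Finally I would identify the right-hand side with $T_f T_f^*$. Because $f\in H^\infty$ is analytic, $T_f$ is multiplication by $f$ on $H^2$ and $T_f^*=T_{\bar f}$ satisfies $T_f^* K_w=\overline{f(w)}K_w$, so that $(T_f T_f^* K_w)(z)=\overline{f(w)}f(z)K_w(z)$, which is exactly the expression just obtained. Thus $C_\varphi C_\varphi^*$ and $T_f T_f^*$ agree on every $K_w$; as both operators are bounded and the kernels are total, they coincide.

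I expect the only genuine obstacle to be the algebraic identity for $1-\overline{\varphi(w)}\varphi(z)$: it is routine but must be carried out carefully so that the scalar $1-|a|^2$ and the two denominators emerge correctly, since this is precisely what turns the bare evaluation into the factored form $f(z)\overline{f(w)}K_w(z)$ that matches $T_fT_f^*$. Everything else---the kernel action of $C_\varphi^*$, the multiplication description of $T_f$ for analytic symbol, and the totality of $\{K_w\}$---is standard.
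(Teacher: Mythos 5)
Your proof is correct, and you have rightly diagnosed the misprint in the statement: the conclusion must read $C_\varphi C_\varphi^*=T_fT_f^*$ (equivalently $(1-|a|^2)C_\varphi C_\varphi^*=(I-\bar aT_z)(I-\bar aT_z)^*$, which is exactly the form in which the lemma is invoked in the proof of Proposition \ref{two points}), since $T_zT_z^*$ is false already for $\varphi=\mathrm{id}$. The paper only cites this as Lemma 3.2 of \cite{Ju07-1} and gives no proof, but your reproducing-kernel computation, using $C_\varphi^*K_w=K_{\varphi(w)}$ and the identity $1-\overline{\varphi(w)}\varphi(z)=(1-|a|^2)(1-\bar wz)/\bigl((1-a\bar w)(1-\bar az)\bigr)$, is precisely the standard argument given in that reference.
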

\begin{proposition} \label{two points}
Let $\Gamma$ be a group of automorphisms on $\mathbb{D}$. If the orbit $\{\varphi(0):\varphi \in \Gamma \}$ consists of at least two linear independent points over $\mathbb{R}$, then $C^*(\{C_{\varphi}:\varphi \in \Gamma\})$ contains the unilateral shift $T_z$.
\end{proposition}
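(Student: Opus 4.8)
The plan is to reduce everything to two automorphisms and then build $T_z$ out of the two operators $C_{\varphi_i}C_{\varphi_i}^{*}$. Since $\Gamma$ is a group, $\varphi\mapsto\varphi^{-1}$ permutes it, so $\{\varphi(0):\varphi\in\Gamma\}=\{\varphi^{-1}(0):\varphi\in\Gamma\}$, and the hypothesis furnishes $\varphi_1,\varphi_2\in\Gamma$ whose points $a_i:=\varphi_i^{-1}(0)$ are linearly independent over $\mathbb{R}$. Write $\mathcal{A}=C^{*}(\{C_\varphi:\varphi\in\Gamma\})$; it is unital because $C_{\mathrm{id}}=I\in\mathcal{A}$. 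Lemma~\ref{Jury lemma} gives $C_{\varphi_i}C_{\varphi_i}^{*}=T_{f_i}T_{f_i}^{*}$ with $f_i(z)=(1-\overline{a_i}z)/(1-|a_i|^2)^{1/2}$, and since $f_i$ is analytic, $T_{f_i}=(1-|a_i|^2)^{-1/2}(I-\overline{a_i}T_z)$. Hence $\mathcal{A}$ contains $B_i:=(1-|a_i|^2)\,C_{\varphi_i}C_{\varphi_i}^{*}-I=-a_iT_z^{*}-\overline{a_i}T_z+|a_i|^2T_zT_z^{*}$. If I can show $T_zT_z^{*}\in\mathcal{A}$, I am done: then $a_iT_z^{*}+\overline{a_i}T_z=|a_i|^2T_zT_z^{*}-B_i\in\mathcal{A}$ for $i=1,2$, and the linear system with coefficient matrix $\left[\begin{smallmatrix}a_1&\overline{a_1}\\a_2&\overline{a_2}\end{smallmatrix}\right]$ has determinant $a_1\overline{a_2}-\overline{a_1}a_2=2i\,\mathrm{Im}(a_1\overline{a_2})\neq0$, precisely because $a_1,a_2$ are $\mathbb{R}$-independent, so I can solve for $T_z$.

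Thus the whole problem becomes: get $T_zT_z^{*}$, equivalently the rank-one projection $P_0=I-T_zT_z^{*}$ onto the constants, into $\mathcal{A}$. Linear combinations of $I,B_1,B_2$ cannot do this: with only two orbit points the vectors $(a_i,\overline{a_i},|a_i|^2)$ span a plane, not the full $T_z^{*},T_z,T_zT_z^{*}$ space, so the $T_zT_z^{*}$ term is an unavoidable obstruction that must be removed through the algebra structure. The device I would use is the commutator $K:=[B_1,B_2]\in\mathcal{A}$. Using $T_z^{*}T_z=I$ and $T_zT_z^{*}=I-P_0$, a direct expansion shows that all the $T_z$- and $T_z^{*}$-terms cancel and $K$ collapses to a finite-rank operator supported on $V:=\mathrm{span}\{1,z\}$; one finds $K=\alpha P_0+\overline{c}\,T_zP_0-c\,P_0T_z^{*}$ with $\alpha=2i\,\mathrm{Im}(a_1\overline{a_2})$ and $c=|a_2|^2a_1-|a_1|^2a_2$, both nonzero by $\mathbb{R}$-independence. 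In the basis $\{1,z\}$ the restriction $K|_V$ has matrix $\left[\begin{smallmatrix}\alpha&-c\\\overline{c}&0\end{smallmatrix}\right]$, with determinant $|c|^2\neq0$, so $K|_V$ is invertible.

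Invertibility of $K|_V$ is exactly what recovers the projection onto $V$. Since $K^{*}K\in\mathcal{A}$ is a positive finite-rank operator whose range is all of $V$, the point $0$ is isolated in its (finite) spectrum, and continuous functional calculus places the spectral projection onto the nonzero eigenvalues—namely the orthogonal projection $P_V$ onto $V$—inside $\mathcal{A}$. With $P_V\in\mathcal{A}$ I can compress: $\widetilde{B}_i:=P_VB_iP_V\in\mathcal{A}$ are operators supported on $V$, acting in the basis $\{1,z\}$ by the Hermitian matrices $\left[\begin{smallmatrix}0&-a_i\\-\overline{a_i}&|a_i|^2\end{smallmatrix}\right]$. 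Their commutator has $(1,1)$-entry $a_1\overline{a_2}-a_2\overline{a_1}\neq0$, so $\widetilde{B}_1,\widetilde{B}_2$ do not commute; two non-commuting self-adjoint elements generate all of $B(V)\cong M_2(\mathbb{C})$, whence $P_0=E_{11}\in\mathcal{A}$. This yields $T_zT_z^{*}=I-P_0\in\mathcal{A}$, and the $2\times2$ inversion from the first paragraph then produces $T_z$.

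The crux—and the only place genuine ingenuity is needed—is removing the $T_zT_z^{*}$ obstruction to reach $P_0$; once the commutator is recognized as a nonzero, $V$-supported, $V$-invertible finite-rank operator, the rest is functional calculus, compression to $M_2(\mathbb{C})$, and a linear solve. It is worth emphasizing that a single quantity, $\mathrm{Im}(a_1\overline{a_2})$, governs three distinct nonvanishing conditions (the leading coefficient of $K$, the non-commutativity of the compressions, and the determinant of the final system), which is why $\mathbb{R}$-linear independence is exactly the right hypothesis. An alternative to the compression step is to argue by irreducibility: $K$ is already a nonzero compact operator in $\mathcal{A}$, so if $\mathcal{A}$ is shown to be irreducible then it contains all of $\mathfrak{K}$ by Theorem~2.4.9 of \cite{mu90}, in particular $P_0$. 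I would prefer the explicit compression route, since it delivers $P_0$ by hand and avoids a separate irreducibility argument.
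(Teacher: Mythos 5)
Your argument is correct, but it takes a genuinely different and considerably longer route than the paper's. Both proofs begin the same way: Lemma \ref{Jury lemma} applied to $\varphi_1^{-1},\varphi_2^{-1}$ puts $s_i=\bar{a_i}T_z+a_iT_z^*-|a_i|^2T_zT_z^*$ (your $-B_i$) into the algebra, and $\mathbb{R}$-independence of $a_1,a_2$ makes the $2\times 2$ system with matrix $\bigl(\begin{smallmatrix}a_1&a_2\\ \bar{a_1}&\bar{a_2}\end{smallmatrix}\bigr)$ solvable. Where you diverge is in how the $T_zT_z^*$ term is eliminated. You treat it as an obstruction that must be cleared \emph{before} the linear solve, and you clear it by extracting the rank-one projection $P_0$: the commutator $K=[B_1,B_2]=\alpha P_0+\bar{c}\,T_zP_0-c\,P_0T_z^*$ is finite rank and invertible on $\mathrm{span}\{1,z\}$ (your values of $\alpha$ and $c$, the matrix of $K$, and the nonvanishing claims all check out), functional calculus on $K^*K$ yields $P_V$, and the compressed algebra is all of $M_2(\mathbb{C})$ because the compressions fail to commute. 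The paper instead performs the linear solve \emph{first}, obtaining $S:=T_z^*+tT_zT_z^*\in\mathcal{D}$ for some scalar $t$, and then disposes of the unwanted term with the one-line identity $SS^*-\bar{t}S=I+tT_z$ (using $T_z^*T_z=I$), which hands over $T_z$ immediately. So the ``unavoidable obstruction'' you describe is in fact removable by a single quadratic combination, with no need for $P_0$, spectral projections, or the $M_2(\mathbb{C})$ compression. Your route does have the side benefit of exhibiting a nonzero finite-rank element of the algebra explicitly (hence $\mathfrak K\subseteq\mathcal{A}$ once $T_z\in\mathcal{A}$ or irreducibility is in hand), and it isolates exactly how $\mathrm{Im}(a_1\bar{a_2})\neq 0$ enters; but as a proof of the proposition it does substantially more work than necessary.
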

\begin{proof} Let $\varphi_{1},\varphi_{2}$ are two distinct elements of $\Gamma$. If $a_{1}=\varphi_{1}(0)$ and $a_{2}=\varphi_{2}(0)$, then by Lemma \ref{Jury lemma},
$$(1-|a_{i}|^2)C_{\varphi_{i}^{-1}}C_{\varphi_{i}^{-1}}^*=(I-\bar{a_{i}}T_z)(I-\bar{a_{i}}T_z)^*,$$
for $i=1,2$. Therefore the unital $C^*$-algebra $\mathcal{D}:=C^*(\{C_{\varphi}:\varphi \in \Gamma\})$ contains
$$s_{i}=\bar{a_i}T_z+a_iT_z^*-|\bar{a_i}|^{2}T_zT_z^*,~~~~~(i=1,2).$$
If $a_1=\alpha +i\beta, a_2=\alpha^{'}+i\beta^{'}$ then a simple computation shows that
$$det\left(
       \begin{array}{cc}
         a_1 & a_2 \\
         \bar{a_1} & \bar{a_2} \\
       \end{array}
     \right)=-2i~~det
     \left(
       \begin{array}{cc}
         \alpha & \alpha^{'} \\
         \beta & \beta^{'} \\
       \end{array}
     \right).
$$
Hence if $a_1,a_2$ are linearly independent over $\mathbb{R}$ then the linear system
$$\left(
    \begin{array}{cc}
      a_1 & a_2 \\
         \bar{a_1} & \bar{a_2} \\
    \end{array}
  \right)
  \left(
    \begin{array}{c}
      x \\
      y \\
    \end{array}
  \right)=
  \left(
    \begin{array}{c}
      1 \\
      0 \\
    \end{array}
  \right)
$$
has a unique solution. Therefore  a linear combination of $s_1,s_2$ is of the form $T_z^*+tT_zT_z^*$,
for some scaler $t \in \mathbb{C}$. If $t=0$, then $\mathcal{D}$ contains $T_z$, otherwise, since
$$I+tT_z=(T_z^*+tT_zT_z^*)(T_z^*+tT_zT_z^*)^*-\bar{t}(T_z^*+tT_zT_z^*) \in \mathcal{D},$$
again $T_z \in \mathcal{D} $.
\end{proof}

\begin{corollary} \label{}
Let $\Gamma$ be a Fuchsion group. If the limit set of $\Gamma$ contains at least two linearly independent points, then
$C^*(\{C_{\varphi}:\varphi \in \Gamma\})$ contains the unilateral shift operator.
\end{corollary}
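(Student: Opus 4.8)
The plan is to reduce the statement to Proposition \ref{two points} by showing that if the limit set of $\Gamma$ contains two $\mathbb{R}$-linearly independent points, then so does the orbit $\{\varphi(0):\varphi\in\Gamma\}$. The bridge between the two is the computation already used in the proof of Proposition \ref{two points}: two complex numbers $a_1=\alpha+i\beta$ and $a_2=\alpha'+i\beta'$ are linearly independent over $\mathbb{R}$ precisely when
$$\det\left(\begin{array}{cc} a_1 & a_2 \\ \bar a_1 & \bar a_2 \end{array}\right)=-2i\,\det\left(\begin{array}{cc}\alpha & \alpha'\\ \beta & \beta'\end{array}\right)\neq 0.$$
Since the left-hand determinant is a continuous function of $(a_1,a_2)\in\mathbb{C}\times\mathbb{C}$, linear independence over $\mathbb{R}$ is an \emph{open} condition, and this is the whole engine of the argument.

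First I would take two points $\xi_1,\xi_2$ in the limit set of $\Gamma$ that are linearly independent over $\mathbb{R}$. Evaluating the determinant above at $(\xi_1,\xi_2)$ gives a nonzero value, so by continuity there exist open neighborhoods $U_1\ni\xi_1$ and $U_2\ni\xi_2$ in $\mathbb{C}$ such that every pair $(a_1,a_2)\in U_1\times U_2$ is again linearly independent over $\mathbb{R}$. The existence of such a product neighborhood avoiding the zero set of the determinant is exactly what continuity at $(\xi_1,\xi_2)$ provides.

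Next, since $\xi_1$ and $\xi_2$ lie in the limit set, each is an accumulation point of the orbit $\{\varphi(0):\varphi\in\Gamma\}$; hence there exist $\varphi_1,\varphi_2\in\Gamma$ with $\varphi_1(0)\in U_1$ and $\varphi_2(0)\in U_2$. Setting $a_i=\varphi_i(0)$, the pair $(a_1,a_2)$ is linearly independent over $\mathbb{R}$ by the choice of $U_1,U_2$; in particular $a_1\neq a_2$, so $\varphi_1\neq\varphi_2$ and the orbit genuinely contains two distinct $\mathbb{R}$-linearly independent points. Proposition \ref{two points} then applies verbatim and yields $T_z\in C^*(\{C_\varphi:\varphi\in\Gamma\})$.

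I do not expect a genuine obstacle here: the entire content is the openness of linear independence together with the definition of the limit set as accumulation points of the orbit. The only point deserving a word of care is that the limit points $\xi_i$ lie on the boundary circle $\mathbb{T}$ while the orbit points $\varphi_i(0)$ lie in the interior of $\mathbb{D}$; this causes no difficulty, because we use the boundary points merely as targets that the interior orbit points approach arbitrarily closely, so they fall inside the neighborhoods $U_1,U_2$ exactly as required.
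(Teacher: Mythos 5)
Your proposal is correct and follows essentially the same route as the paper: the paper's proof likewise chooses orbit points near two linearly independent limit points and invokes Proposition \ref{two points}, with your determinant-continuity argument simply making explicit the openness of linear independence that the paper leaves implicit.
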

\begin{proof}
If $z_1,z_2$ are linearly independent points of the limit set, then one can choose two elements of the orbit $\{\varphi(0):\varphi \in \Gamma\}$ near to
$z_1,z_2$ that are linearly independent. Hence by Proposition \ref{two points}, the above $C^*$-algebra contains the unilateral shift operator.
\end{proof}

As a consequence if $\Gamma$ be a non-elementary Fuchsion group, then the above corollary shows that $C^*(\{C_{\varphi}:\varphi \in \Gamma\})$ contains $T_z$.
Since the action of a Fuchsian group is amenable and topologically free, we have the following result, which could be proved by Proposition \ref{two points}, similar to Theorem 3.1 in \cite{Ju07-1}.
\begin{corollary}
Let $\Gamma$ be a Fucsian group and the orbit $\{\varphi(0):\varphi \in \Gamma \}$ consists of at least two linear independent points over $\mathbb{R}$. Then
there is an exact sequence
$$0\rightarrow \mathfrak K\rightarrow C^*(\{C_{\varphi}:\varphi \in \Gamma \})\rightarrow C(\mathbb{T})\rtimes \Gamma \rightarrow 0.$$
\end{corollary}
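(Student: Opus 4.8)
The plan is to leave the tautological exact sequence
$0\to\mathfrak K\to\mathcal D\to\mathcal D/\mathfrak K\to 0$, where $\mathcal D:=C^*(\{C_\varphi:\varphi\in\Gamma\})$, and to do all the work at the level of the quotient, identifying $\mathcal D/\mathfrak K$ with the crossed product $C(\mathbb T)\rtimes\Gamma$. First I would check that $\mathfrak K$ really is an ideal of $\mathcal D$: by Proposition \ref{two points} the orbit hypothesis forces $T_z\in\mathcal D$, hence $C^*(T_z)\subseteq\mathcal D$, and since $C^*(T_z)$ already contains $\mathfrak K$ (Coburn) while $\mathfrak K\trianglelefteq B(H^2)$, the sequence above is exact. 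So everything reduces to producing a $*$-isomorphism $\mathcal D/\mathfrak K\cong C(\mathbb T)\rtimes\Gamma$, following the strategy of Theorem~3.1 in \cite{Ju07-1}.

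Next I would assemble a covariant representation of $(C(\mathbb T),\Gamma,\alpha)$ inside the Calkin algebra. Put $\pi(f):=[T_f]$ and, for $\varphi\in\Gamma$, $v_\varphi:=[U_{\varphi^{-1}}]$, where $U_\varphi$ is the unitary part of $C_\varphi$. The map $\pi$ is a faithful unital $*$-homomorphism of $C(\mathbb T)$, since $[T_f][T_g]=[T_{fg}]$ and $[T_f]=0$ iff $f\equiv0$ (Coburn). Theorem \ref{unitary} gives, modulo compacts, $v_{\varphi_1}v_{\varphi_2}=v_{\varphi_1\circ\varphi_2}$ and $v_\varphi\,\pi(f)\,v_\varphi^*=\pi(f\circ\varphi^{-1})$, so $(\pi,v)$ is a covariant pair for the action $\alpha_\varphi(f)=f\circ\varphi^{-1}$. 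Integrating yields a $*$-homomorphism $\pi\times v$ from the crossed product onto the $C^*$-algebra generated by $\pi(C(\mathbb T))$ and $\{v_\varphi\}$. To see this image is all of $\mathcal D/\mathfrak K$, note that by Lemma \ref{Jury lemma} and Coburn's description $(C_\varphi^*C_\varphi)^{\pm1/2}$ is, modulo $\mathfrak K$, a Toeplitz operator with continuous nonvanishing symbol, hence lies in $\pi(C(\mathbb T))$; therefore both $[C_\varphi]=v_{\varphi^{-1}}\pi(h)$ and $v_\varphi$ lie in the image, so $\pi\times v$ is onto $\mathcal D/\mathfrak K$. Because the boundary action of the Fuchsian group $\Gamma$ is amenable, the full and reduced crossed products coincide, and I may view $\pi\times v$ as defined on $C(\mathbb T)\rtimes\Gamma$.

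Finally I would prove injectivity. Each nontrivial $\varphi\in\Gamma$ is a M\"obius transformation and so fixes at most two points of $\mathbb T$; as $\Gamma$ is discrete (hence countable), the set of points with nontrivial stabilizer is countable, so its complement is dense and the action is topologically free. For an amenable, topologically free action, every nonzero ideal of the (reduced) crossed product meets the coefficient algebra $C(\mathbb T)$ nontrivially. Since $\ker(\pi\times v)\cap C(\mathbb T)=\ker\pi=0$, the kernel is trivial, so $\pi\times v$ is a $*$-isomorphism $C(\mathbb T)\rtimes\Gamma\xrightarrow{\ \sim\ }\mathcal D/\mathfrak K$, which gives the claimed exact sequence.

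The main obstacle is the last step: upgrading faithfulness on $C(\mathbb T)$ to faithfulness on the whole crossed product. This needs both amenability of the action (to pass from the universal to the reduced crossed product, so that no separate norm issue arises) and topological freeness (to invoke the ideal-intersection property). The covariance relations are immediate from Theorem \ref{unitary}, and surjectivity is a routine consequence of Coburn's theorem together with Lemma \ref{Jury lemma}, so those steps carry no real difficulty.
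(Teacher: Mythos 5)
Your proposal is correct and follows exactly the route the paper intends: the paper gives no written proof of this corollary, merely noting that it "could be proved by Proposition \ref{two points}, similar to Theorem 3.1 in \cite{Ju07-1}" using amenability and topological freeness of the boundary action, and your argument is a faithful fleshing-out of that sketch (Proposition \ref{two points} to get $T_z$ and hence $\mathfrak K$ inside the algebra, the covariant pair $([T_f],[U_{\varphi^{-1}}])$ via Theorem \ref{unitary}, surjectivity via the polar decomposition and Lemma \ref{Jury lemma}, and injectivity from amenability plus topological freeness).
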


As an example of this situation, let $\Gamma$ be a Fuchsian group and contains a hyperbolic map (a linear-fractional map with two distinct fixed points on the unit circle) whose fixed points are not diagonal (endpoints of one diameter).
Then the $C^*$-algebra $C^*(\{C_{\varphi}:\varphi \in \Gamma\})$ contains $T_z$. This follows from the fact that fixed points of hyperbolic
and parabolic maps are contained in the limit set.
\begin{corollary} \label{gamma}
Let $\varphi$ is an automorphism of the unit disk. If $\varphi^n(0)$ and $\varphi^m(0)$ are linearly independent over $\mathbb{R}$, for some $n,m \in \mathbb{Z}$, then the unital $C^*$-algebra $C^*(C_{\varphi})$ contains the shift operator.
\end{corollary}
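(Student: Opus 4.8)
The plan is to apply Proposition \ref{two points} to the cyclic group $\Gamma=\langle\varphi\rangle=\{\varphi^{k}:k\in\mathbb{Z}\}$ generated by $\varphi$, which is a group of automorphisms of $\mathbb{D}$. The only real content is to check that the unital $C^{*}$-algebra generated by the single operator $C_{\varphi}$ coincides with $C^{*}(\{C_{\psi}:\psi\in\Gamma\})$; once this identification is in place, the hypothesis is exactly what Proposition \ref{two points} requires and the conclusion is immediate.

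First I would record the composition law $C_{\varphi}C_{\psi}=C_{\psi\circ\varphi}$, which follows directly from $C_{\varphi}f=f\circ\varphi$. Iterating gives $C_{\varphi}^{k}=C_{\varphi^{k}}$ for every $k\geq 0$, so each $C_{\varphi^{k}}$ with $k\geq 0$ lies in $C^{*}(C_{\varphi})$. The hard part will be the negative powers, i.e.\ showing $C_{\varphi^{-1}}\in C^{*}(C_{\varphi})$. Here I would use that $\varphi\in Aut(\mathbb{D})$ forces $\varphi^{-1}\in Aut(\mathbb{D})$, so $C_{\varphi^{-1}}$ is bounded and the composition law yields $C_{\varphi}C_{\varphi^{-1}}=C_{\varphi^{-1}\circ\varphi}=I=C_{\varphi^{-1}}C_{\varphi}$; thus $C_{\varphi}$ is invertible in $B(H^{2})$ with inverse $C_{\varphi^{-1}}$. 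To place this inverse inside the generated algebra, I would invoke the standard fact that the inverse of an invertible element of a unital $C^{*}$-algebra belongs to that algebra: since $C_{\varphi}^{*}C_{\varphi}$ is positive and invertible, its spectrum is bounded away from $0$, so $(C_{\varphi}^{*}C_{\varphi})^{-1}\in C^{*}(C_{\varphi})$ by continuous functional calculus, and then $C_{\varphi}^{-1}=(C_{\varphi}^{*}C_{\varphi})^{-1}C_{\varphi}^{*}\in C^{*}(C_{\varphi})$. Consequently $C_{\varphi^{k}}=C_{\varphi}^{k}\in C^{*}(C_{\varphi})$ for all $k\in\mathbb{Z}$, and since conversely $C_{\varphi}$ itself is one of the generators of $C^{*}(\{C_{\psi}:\psi\in\Gamma\})$, the two algebras coincide.

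With the identification $C^{*}(C_{\varphi})=C^{*}(\{C_{\psi}:\psi\in\Gamma\})$ established, I would finish by observing that the orbit of $0$ under $\Gamma$ is precisely $\{\varphi^{k}(0):k\in\mathbb{Z}\}$, which by hypothesis contains the two points $\varphi^{n}(0)$ and $\varphi^{m}(0)$ that are linearly independent over $\mathbb{R}$. (These points are automatically distinct, since linearly independent vectors in $\mathbb{R}^{2}$ cannot be equal, so $\varphi^{n}$ and $\varphi^{m}$ are distinct elements of $\Gamma$.) Proposition \ref{two points} then applies to $\Gamma$ and gives that $C^{*}(\{C_{\psi}:\psi\in\Gamma\})$, hence $C^{*}(C_{\varphi})$, contains the shift operator $T_{z}$, which is the desired conclusion. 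The only step that is not a one-line formality is the passage to negative powers, and that is handled entirely by the invertibility of $C_{\varphi}$ together with functional calculus.
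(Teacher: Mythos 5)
Your proposal is correct and follows essentially the same route as the paper: the paper's proof likewise observes that $C_{\varphi^{-1}}$ is the inverse of $C_{\varphi}$, hence $C_{\varphi^{k}}\in C^{*}(C_{\varphi})$ for all $k\in\mathbb{Z}$, and then applies Proposition \ref{two points}. You merely make explicit (via the identity $C_{\varphi}^{-1}=(C_{\varphi}^{*}C_{\varphi})^{-1}C_{\varphi}^{*}$ and functional calculus in the unital algebra) the standard fact that the paper leaves implicit, namely that the inverse of $C_{\varphi}$ lies in the unital $C^{*}$-algebra it generates.
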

\begin{proof}
Since $C_{\varphi^{-1}}$ is the inverse of $C_{\varphi}$, the $C^*$-algebra $C^*(C_{\varphi})$ contains $C_{\varphi^n}$, for all $n\in \mathbb{Z}$. Now the result follows from Proposition \ref{two points}.
\end{proof}

Let $\varphi \in Aut(\mathbb{D})$ be of the form $\varphi(z)=\omega \frac{s-z}{1-\bar{s}z}$
for some $\omega \in \mathbb{T}$ and $s \in \mathbb{D}$. If $\omega$ is not real $(\omega \neq \pm 1)$
 and $s \neq 0$ then $T_z \in C^*(C_{\varphi})$. Indeed, a simple calculation shows that
 $$\varphi(0)=\omega s,~~~~~~~\varphi^{2}(0)=\omega s \frac{1-\omega}{1-|s|^2\omega}$$
 and if $\omega=x+iy$ $(y\neq0)$, then
 $$\frac{1-\omega}{1-|s|^2 \omega}=\frac{1+|s|^2(1-x)+i(|s|^2 y-y)}{|1-|s|^2 \omega|^2}.$$
 Therefore $\frac{1-\omega}{1-|s|^2 \omega}$ is not real and $\varphi(0)$ and $\varphi^2(0)$ are linearly independent over $\mathbb{R}$. Hence Corollary \ref{gamma} shows that $T_z \in C^*(C_\varphi)$. If $\omega$ is real (1 or -1) or $s\neq 0$, then all $\varphi^{n}(0)$'s are dependent.

 Jury in \cite{Ju07-2} finds the $C^*$-algebra $C^*(T_z,C_{\varphi})/\mathfrak K$, for $\varphi \in Aut(\mathbb{D})$, as a crossed product $C^*$-algebra.
 We do the same when the shift operator is replaced by a general irreducible Toeplitz operator $T_\psi$. The above example  shows that if $\varphi \in Aut(\mathbb{D})$
be of the form (\ref{Auto}) for some non-real $\omega \in \mathbb{T}$ and non-zero $s \in \mathbb{D}$, then the structure of
$C^*(T_z,C_{\varphi})/\mathfrak {K}=C^*(C_{\varphi})/\mathfrak{K}$ does not change, if one replaces $T_z$ with $T_\psi$. Here we check the case $s=0$.
\begin{theorem} \label{exact}
 Let $T_{\psi}$ is irreducible and $\varphi \in Aut(\mathbb{D})$. If the composition operator $C_{\varphi}$  is unitary and
 $\varphi(\psi(\mathbb{T}))=\psi(\mathbb{T})$, then there is an exact sequence of $C^*$-algebras
 $$0\rightarrow \mathfrak K\rightarrow C^*(T_{\psi},C_{\varphi})\rightarrow C(\psi(\mathbb{T})))\rtimes_{\varphi}\mathbb{Z} \rightarrow 0.$$
 \end{theorem}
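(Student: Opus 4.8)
The plan is to exhibit the sequence as the canonical extension of $C^*(T_\psi,C_\varphi)$ by the compacts. First I would observe that, $T_\psi$ being irreducible, $C^*(T_\psi)$ is an irreducible $C^*$-subalgebra of $C^*(T_z)$ and therefore contains $\mathfrak K$, exactly as recalled at the start of Section \ref{s} (via \cite{co69} and Theorem 2.4.9 of \cite{mu90}). Since $C^*(T_\psi)\subseteq C^*(T_\psi,C_\varphi)$, the ideal $\mathfrak K$ of $B(H^2)$ is a closed two-sided ideal of $C^*(T_\psi,C_\varphi)$, so the left-hand inclusion is injective and the content of the theorem collapses to producing a $*$-isomorphism
$$C^*(T_\psi,C_\varphi)/\mathfrak K\;\cong\;C(\psi(\mathbb T))\rtimes_\varphi\mathbb Z .$$
Once this is in hand the quotient map furnishes the right-hand arrow and exactness at each of the three spots is immediate.

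For the isomorphism I would build a covariant representation of the system $(C(\psi(\mathbb T)),\mathbb Z,\varphi)$ in the quotient $Q:=C^*(T_\psi,C_\varphi)/\mathfrak K$. Applying Theorem \ref{1co69} to $D=C^*(T_\psi)$ identifies $C^*(T_\psi)/\mathfrak K$ with $C([\mathbb T])$, and since $[x]\mapsto\psi(x)$ is a homeomorphism of $[\mathbb T]$ onto the compact set $\psi(\mathbb T)$, this yields a faithful $*$-homomorphism $\pi\colon C(\psi(\mathbb T))\to Q$ with $\pi(F)=[T_{F\circ\psi}]$. Because $C_\varphi$ is unitary its polar part is itself, $U_\varphi=C_\varphi$, so Theorem \ref{unitary} gives $u[T_f]u^*=[T_{f\circ\varphi}]$ for all $f\in C(\mathbb T)$, where $u:=[C_\varphi]$ is a unitary of $Q$. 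The hypothesis $\varphi(\psi(\mathbb T))=\psi(\mathbb T)$ enters precisely here: it guarantees that $\varphi$ descends to a homeomorphism of $\psi(\mathbb T)$, equivalently that $C^*(\psi)$ is invariant under $f\mapsto f\circ\varphi$, so that the previous relation reads $u\,\pi(F)\,u^*=\pi(F\circ\varphi)$ and $(\pi,u)$ is a genuine covariant pair. The universal property of the full crossed product then produces a $*$-homomorphism $\Phi\colon C(\psi(\mathbb T))\rtimes_\varphi\mathbb Z\to Q$, and $\Phi$ is surjective because its range is a $C^*$-algebra containing both $\pi(C(\psi(\mathbb T)))$ (which includes $[T_\psi]$) and $u=[C_\varphi]$, which together generate $Q$.

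It remains to prove $\Phi$ injective, and this is the crux. Since $\mathbb Z$ is amenable the full and reduced crossed products coincide, and the rotation action of $\varphi$ on $\psi(\mathbb T)$ is topologically free provided $\varphi$ has infinite order: for $n\neq0$ the induced homeomorphism then fixes at most one point of the infinite set $\psi(\mathbb T)$, so its fixed-point set is nowhere dense. One can therefore invoke the standard faithfulness theorem for crossed products by topologically free actions of discrete groups (as in the amenable, topologically free setting used just before this theorem): any $*$-homomorphism that is isometric on the coefficient algebra is isometric on the whole reduced crossed product. As $\pi$ is faithful, $\Phi$ is injective, hence a $*$-isomorphism, and the exact sequence follows. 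I expect this faithfulness step to be the main obstacle, since it must rule out any collapse among the powers $u^n$ beyond those dictated by the dynamics; this is exactly what topological freeness secures, and it is what forces $\varphi$ to have infinite order (a finite order would degenerate the $\mathbb Z$-crossed product). The accompanying delicate bookkeeping is verifying that the invariance hypothesis is strong enough for conjugation by $u$ to preserve $\pi(C(\psi(\mathbb T)))$ and to implement the rotation, i.e. the well-definedness of the action on $\psi(\mathbb T)$. An alternative, shorter route would embed $C^*(T_\psi,C_\varphi)$ in $C^*(T_z,C_\varphi)$ (using $T_\psi\in C^*(T_z)$) and identify $Q$ with the invariant subalgebra $C(\psi(\mathbb T))\rtimes_\varphi\mathbb Z$ inside Jury's $C(\mathbb T)\rtimes_\varphi\mathbb Z$ \cite{Ju07-2}.
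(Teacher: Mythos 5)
Your argument is, in all essentials, the paper's own proof: the paper likewise forms the covariant pair from $C^*(T_\psi)/\mathfrak K\cong C(\psi(\mathbb T))$ together with the unitaries $[C_{\varphi^n}]$, obtains the covariance relation from Theorem \ref{unitary}, deduces a surjection from the full crossed product onto $C^*(T_\psi,C_\varphi)/\mathfrak K$, and appeals to amenability plus topological freeness of the action (as in Theorem 2.1 of \cite{Ju07-2}) for injectivity.

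The reservation you record at the injectivity step is not a technicality to be smoothed over later; it is a genuine gap, and it is present in the paper's proof as well. Since $C_\varphi$ is unitary, $\varphi$ is a rotation, and if that rotation has finite order $q$ then $\varphi^q=\mathrm{id}$ fixes every point of $\psi(\mathbb T)$, so the action is \emph{not} topologically free (the paper asserts the contrary without restriction). In that case the surjection $\Phi$ genuinely fails to be injective: $[C_\varphi]^q=[C_{\varphi^q}]=[I]$ in the quotient, whereas the canonical unitary $\delta_q$ of $C(\psi(\mathbb T))\rtimes_\varphi\mathbb Z$ is not the identity (test it in the regular representation), so $\delta_q-1$ lies in $\ker\Phi$. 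Thus the theorem as stated requires either the additional hypothesis that $\varphi$ has infinite order, or the replacement of $\mathbb Z$ by $\mathbb Z/q\mathbb Z$; this matters because the paper's subsequent examples are precisely the finite-order rotations $\varphi(z)=ze^{2\pi ip/q}$. Your second worry is also legitimate: Theorem \ref{unitary} gives $u[T_{F\circ\psi}]u^*=[T_{F\circ(\psi\circ\varphi)}]$, and for this to equal $\pi\bigl(F\circ\varphi|_{\psi(\mathbb T)}\bigr)=[T_{F\circ(\varphi\circ\psi)}]$ one needs $\psi\circ\varphi=\varphi\circ\psi$ (or at least that $\psi\circ\varphi$ factor through $\psi$ via the restriction of $\varphi$), which is true for the equivariant Riemann maps constructed in the paper's examples but is not implied by the stated hypothesis $\varphi(\psi(\mathbb T))=\psi(\mathbb T)$ alone. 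So your proposal reproduces the paper's argument and, to its credit, isolates exactly the two points where that argument is incomplete.
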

 \begin{proof}
The image $X=\psi(\mathbb{T})$ of $\psi$ is a compact Hausdorff space and $\varphi^n(X)=X$, for all $n \in \mathbb{Z}$. Now $\mathbb{Z}$ acts on $X$
by
$$\beta:\mathbb{Z}\rightarrow Home(X)$$
$$n\mapsto \beta_{n},~~~~\beta_n(x)=\varphi^n (x),$$
for  $n \in \mathbb{Z}$ and $x \in X$. This induces an action of $\mathbb{Z}$ on $C(X)$ given by
$$\alpha :\mathbb{Z}\rightarrow Aut(C(X)) $$
$$\alpha_n(f)(x)=f(\varphi^{-n}(x)).$$
The $C^*$-algebra $C^*(T_{\psi},C_{\varphi})/\mathfrak K$ is generated by $C^*(T_{\psi})/\mathfrak K\cong C(X)$ and
unitaries $[C_{\varphi^n}]$. On the other hand, Theorem \ref{unitary} shows that the unitary representation $n\rightarrow [C_{\varphi^{-n}}]$
satisfies the covariance relation $[C_{\varphi}]f[C_{\varphi}^*]=\alpha_{n}(f)$. Hence there is a surjective $*$-homomorphism from the full crossed product
$C(X)\rtimes_{\varphi}\mathbb{Z}$ to $C^*(T_{\psi},C_{\varphi})/\mathfrak K$. But  the action of the amenable group $\mathbb{Z}$
on compact Hausdorff space $X$ is amenable and topologically free (i.e. for each $n \in \mathbb{Z}$, the set of points that are fixed by $\varphi^n$ has empty interior) thus similar to the proof of Theorem 2.1 in \cite{Ju07-2}, the above $*$-homomorphism is also injective and hence an isometry.
 \end{proof}
\begin{corollary}
Let $\varphi \in Aut(\mathbb{D})$ be of the form (\ref {Auto}) with $s=0$. If $T_\psi$ is irreducible and
$\varphi(\psi(\mathbb{T}))=\psi(\mathbb{T})$, then there is an exact sequence of $C^*$-algebras
$$0\rightarrow \mathfrak K\rightarrow C^*(T_{\psi},C_{\varphi})\rightarrow C(\psi(\mathbb{T})))\rtimes_{\varphi}\mathbb{Z}\rightarrow 0.$$
\end{corollary}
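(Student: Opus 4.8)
The plan is to recognize this corollary as the special case $s=0$ of Theorem \ref{exact}, so that the entire content reduces to verifying that the hypotheses of that theorem are met — in particular, that $C_\varphi$ is a unitary operator. All of the other hypotheses ($T_\psi$ irreducible, $\varphi \in Aut(\mathbb{D})$, and $\varphi(\psi(\mathbb{T}))=\psi(\mathbb{T})$) are either assumed outright in the corollary or identical to those in the theorem, so once unitarity is established the exact sequence is handed to us by Theorem \ref{exact} verbatim.

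First I would substitute $s=0$ into the normal form (\ref{Auto}) to obtain $\varphi(z)=\omega\frac{0-z}{1-0}=-\omega z$, which is a rotation of $\mathbb{D}$ fixing the origin. I would then show that $C_\varphi$ is unitary. The cleanest route is to act on the orthonormal basis $\{z^n\}_{n\geq 0}$ of $H^2$: since $C_\varphi z^n=(-\omega)^n z^n$ and $|{-\omega}|=1$, the operator $C_\varphi$ is diagonal with unimodular entries, hence unitary. Alternatively one can invoke Lemma \ref{Jury lemma}: with $a=\varphi^{-1}(0)=0$ the associated function is $f\equiv 1$, giving $C_\varphi C_\varphi^*=I$; combining this with the fact that every composition operator induced by an automorphism is invertible (with inverse $C_{\varphi^{-1}}$) forces $C_\varphi^*=C_\varphi^{-1}$, and therefore unitarity.

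With $C_\varphi$ unitary and the remaining hypotheses in place, I would simply apply Theorem \ref{exact} to produce the exact sequence $0\to\mathfrak K\to C^*(T_\psi,C_\varphi)\to C(\psi(\mathbb{T}))\rtimes_\varphi\mathbb{Z}\to 0$. There is no genuine obstacle here: the corollary is a routine specialization, and the only point requiring attention is the elementary observation that the automorphisms of the form (\ref{Auto}) with $s=0$ are exactly the rotations about the origin, whose composition operators are unitary. This is precisely the case flagged in the discussion preceding the theorem, where the orbit $\{\varphi^n(0)\}$ collapses to the single point $\{0\}$ so that $C^*(C_\varphi)$ need not contain $T_z$ — which is exactly why the irreducible Toeplitz operator $T_\psi$ must be supplied separately rather than recovered from $C_\varphi$ via Proposition \ref{two points}.
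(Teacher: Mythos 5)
Your proposal is correct and follows essentially the same route as the paper: identify $\varphi$ as a rotation when $s=0$, verify that $C_\varphi$ is unitary (the paper says it is an isometry of $H^2$ onto $H^2$; your diagonal-on-the-monomial-basis computation is a clean way to see this), and then invoke Theorem \ref{exact}. No gaps.
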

\begin{proof}
In this case $\varphi (z)=\omega z$, for some $\omega \in \mathbb{T}$. It is easy to check that
$C_{\varphi}$ is an isometry  from $H^2$ onto $H^2$. Hence by the above theorem, $C^*(T_{\psi},C_{\varphi})/\mathfrak K$ is
isomorphic to the crossed product $C(\psi(\mathbb{T})))\rtimes_{\varphi}\mathbb{Z}$.
\end{proof}

One may wonder if for $\varphi \in Aut(\mathbb{D})$ of the form (\ref {Auto}) with $s=0$, there is a function $\psi$ that satisfies in the hypothesis of the above corollary. For example, let $\varphi(z)=ze^{i\frac{2\pi}{3}}$. Theorem 1 in \cite{no67} gives a sufficient condition for irreducibility
of Toeplitz operator $T_{\psi}$: if the restriction of a function $\psi \in H^2$ to a Borel subset $S\subseteq \mathbb{T}$,
with positive normalized Haar measure on the unit circle, is one-to-one and the sets $\psi(S)$ and $ \psi(\mathbb{T}\setminus S)$  are disjoint,
then $T_{\psi}$ is irreducible. By the Riemann mapping theorem (for example, see \cite{gr94}) there exists a biholomorphic (bijective and holomorphic)
map $\psi$ from the unit disk $\mathbb{D}$ onto the simply connected set
$$\mathbb{D}-([1/2,1)\cup [-1/4+\sqrt{3}/4i,-1/2+\sqrt{3}/2i)\cup [-1/4-\sqrt{3}/4i,-1/2-\sqrt{3}/2i))$$ in the complex plane. A result of Carath\'{e}odory in \cite{ca13} states that $\psi$ extends continuously to  the closure of the unit disk. Moreover one can choose $\psi$ (see Figure \ref{fig1}) such that
$$\psi(A)=A^{'},\psi(B)=B^{'},\psi(C)=C^{'},\psi(K)=K,\psi(K^{'})=K{'},\psi(K^{''})=K^{''},$$
and
$$\psi(D)=\psi(D^{'})=A,\psi(E)=\psi(E^{'})=B,\psi(F)=\psi(F^{'})=C.$$
where capital letters show points in the closure of the unit disk.
The restriction of $\psi$ to $\mathbb{T}$, also denotes by $\psi$, is an element of $H^2$, one-to-one on
$$S=\{e^{i\theta}:\theta \in (\frac{2\pi}{15},\frac{8\pi}{15})\cup(\frac{4\pi}{5},\frac{6\pi}{5})\cup(\frac{22\pi}{15},\frac{28\pi}{15})\},$$
$\psi(S)$ and $\psi(\mathbb{T}\setminus S)$ are disjoint and the image of $\psi$ is invariant under $\varphi$, that
is, $\varphi(\psi(\mathbb{T}))=\psi(\mathbb{T})$).
Moreover since $\psi$ is not one-to-one on $\mathbb{T}$, $C^*(T_{\psi})\neq C^*(T_{z})$.
\begin{figure}[h]
 \center
\includegraphics[angle=0,width=.7\textwidth]{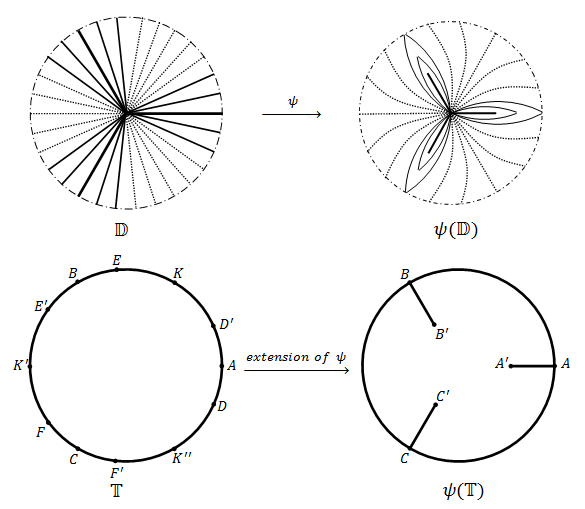}
\caption{\label{fig1}  }
\end{figure}
\begin{figure}[h]
 \center
\includegraphics[angle=0,width=.3\textwidth]{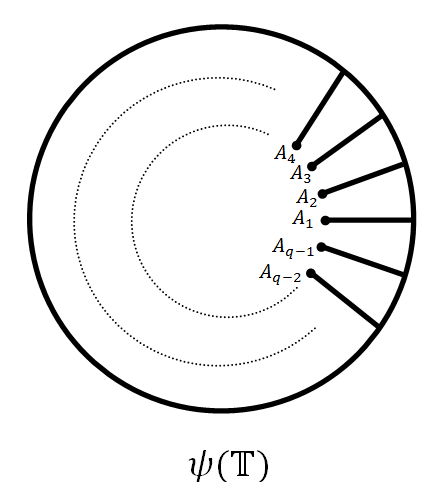}
\caption{\label{fig2}  }
\end{figure}

More generally, if the automorphism $\varphi$ is of the form $\varphi(z)=ze^{i \frac{2p }{q}\pi}$ where  $p$ and $q$ are relatively prime integers with $q$ positive, then by a similar construction, there is a function $\psi$ that satisfies the conditions of the above corollary and is not one-to-one on the unit circle and $$\psi(\mathbb{T})=\mathbb{T}\cup (\bigcup_{n=0}^{n=q-1}\varphi^{n}([1/2,1)),$$
(see Figure \ref{fig2}). We show that the $C^*$-algebras $C(\psi(\mathbb{T}))\rtimes_{\varphi}\mathbb{Z}$ and $C(\mathbb{T})\rtimes_{\varphi}\mathbb{Z}$ are non isomorphic. This is of course easy when $p=0$, that is, $\varphi(z)=z$ on $\mathbb{T}$ and the action of $\mathbb{Z}$ on $\mathbb{T}$ and $\psi(\mathbb{T})$ is trivial. In this case,   $$C(\psi(\mathbb{T}))\rtimes_{id}\mathbb{Z}\cong C(\psi(\mathbb{T}))\otimes C^*(\mathbb{Z})\cong C(\psi(\mathbb{T}))\otimes C(\mathbb{T})\cong C(\psi(\mathbb{T}) \times \mathbb{T})$$
and
$C(\mathbb{T})\rtimes_{id}\mathbb{Z}\cong C(\mathbb{T}\times\mathbb{T}).$
 The spectrum of these $C^*$-algebras are $\psi(\mathbb{T})\times \mathbb{T}=(\mathbb{T}\cup [1/2,1))\times \mathbb{T}$ and $\mathbb{T}^2$, which are not homeomorphic: if, on the contrary,   $\Phi$ is a homeomorphism from $(\mathbb{T}\cup [1/2,1))\times\mathbb{T}$ onto $\mathbb{T}^2$ and $S$ is the half of $\mathbb{T}$ on the left side of the imaginary axis, then  $\Phi(S\times \mathbb{T})=X \times Y$ is connected, and so are $X$ and $Y$. If $X\neq \mathbb{T}$ and $Y\neq\mathbb{T}$ then $\mathbb{T}\setminus X$ and $\mathbb{T}\setminus Y$ are connected subsets of $\mathbb{T}$. Hence $((\mathbb{T}\setminus X)\times \mathbb{T})\cup (\mathbb{T}\times (\mathbb{T}\setminus Y))\subseteq \mathbb{T}^2$ is connected, and so is $\Phi^{-1}(((\mathbb{T}\setminus X)\times \mathbb{T})\cup(\mathbb{T}\times(\mathbb{T}\setminus Y)))=(\psi(\mathbb{T})\setminus S)\times \mathbb{T}$, a contradiction, as $\psi(\mathbb{T})\setminus S$ is not  connected. Similarly $X=\mathbb{T}$ or $Y=\mathbb{T}$ lead to a contradiction.

For the more general example, we need some preparation to show that the crossed products are non isomorphic (we refer the reader to \cite{wi07} for more details).
Let $Y$ is  a topological space. The $T_0$-ization of $Y$ is the quotient space $(Y)^{\sim}=Y/\sim$ where $\sim$ is the equivalence relation on $Y$ defined by $x\sim y$ if $\overline{\{x\}}=\overline{\{y\}}$. The space $(Y)^{\sim}$, equipped with the quotient topology, is a $T_0$ topological space (see Lemma 6.10 in \cite{wi07}).

Let $G$ be a topological group acting on a topological space $X$ from left. The orbit of  $x \in X$ is the set $G\cdot x=\{s\cdot x:s\in G\}$. The stability group at $x$ is $G_x:=\{s\in G:s\cdot x=x\}$. The set of orbits is denoted by $G\backslash X$ and is called the orbit space. It is equipped with the weakest topology making the natural quotient map $p:X\rightarrow G\backslash X$ continuous.

By Lemma 3.35 in \cite{wi07}, if $X$ is second countable or locally compact, then so is $G\backslash X$. When $G$ is a locally compact abelian group and $\widehat{G}$ is the character group of $G$, one could define an equivalence relation on $X\times \widehat{G}$ by  $(x,\tau)\sim (y,\sigma)$ if $\overline{G\cdot x}=\overline{G\cdot y}$ and $\tau\bar{\sigma} \in G_x^{\perp}$. The space $X\times \widehat{G}/\sim$ is given the quotient topology. Note that if $X$ is a $T_0$-space, then $X\times \widehat{G}/\sim=X\times \widehat{G}/G_x^{\perp}$. By Remark 8.42 in \cite{wi07}, $X\times \widehat{G}/\sim$ is homeomorphic to $(G\backslash X)^{\sim}\times \widehat{G}/ \sim.$

\begin{proposition}
Let $\psi$ be as above and $\varphi(z)=ze^{i \frac{2p }{q}\pi}$, with $p$ and $q$ relatively prime integers and $q>0$. Then the spectrum of the $C^*$-algebra $C(\psi(\mathbb{T}))\rtimes_{\varphi}\mathbb{Z}$ is homeomorphic to $(\mathbb{T}\cup [1/2,1))\times \mathbb{T}$.
\end{proposition}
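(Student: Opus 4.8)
The plan is to apply to the concrete pair $(\mathbb{Z},X)$, with $X=\psi(\mathbb{T})$ and $\mathbb{Z}$ acting through $\varphi(z)=ze^{i2\pi p/q}$, the abelian‑crossed‑product spectrum description assembled above, namely that the spectrum of $C(\psi(\mathbb{T}))\rtimes_\varphi\mathbb{Z}$ is homeomorphic to $X\times\widehat{\mathbb{Z}}/\!\sim$. First I would record the structure of the dynamical system. Since $\gcd(p,q)=1$, the rotation $\varphi$ has order exactly $q$: one checks that $\varphi^{k}z=z$ for $z\neq 0$ forces $q\mid pk$, hence $q\mid k$, so $\varphi^{q}=\mathrm{id}$ while $\varphi^{k}\neq\mathrm{id}$ for $0<k<q$. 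For $q\geq 2$ the map $\varphi$ is moreover fixed‑point free on $X$, whereas $q=1$ gives $\varphi=\mathrm{id}$ (the trivial action treated just before the proposition). In every case each orbit $\mathbb{Z}\cdot x=\{x,\varphi x,\dots,\varphi^{q-1}x\}$ has exactly $q$ points, is finite and hence closed (so $\overline{\mathbb{Z}\cdot x}=\mathbb{Z}\cdot x$), and the stability group is the constant subgroup $G_x=q\mathbb{Z}$ for all $x\in X$. In particular the orbit space $\mathbb{Z}\backslash X$ is Hausdorff, so already $T_0$, whence $(\mathbb{Z}\backslash X)^{\sim}=\mathbb{Z}\backslash X$, and the general spectrum description applies without regularity pathologies.

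Next I would identify the orbit space. Writing $X=\mathbb{T}\cup\bigcup_{n=0}^{q-1}\varphi^{n}([1/2,1))$ as the unit circle with $q$ radial half‑open spikes attached at the $q$‑th roots of unity, the finite group $\mu_q$ of $q$‑th roots of unity rotates the circle and permutes the spikes cyclically as a single orbit. The map $z\mapsto z^{q}$ induces a homeomorphism $\mathbb{T}/\mu_q\cong\mathbb{T}$ on the circular part and collapses the $q$ spikes to one spike, the $q$‑th roots of unity all mapping to the single junction point. I would then verify that, with its quotient topology, $\mathbb{Z}\backslash X=X/\mu_q$ is homeomorphic to $\mathbb{T}\cup[1/2,1)$, a circle carrying one radial slit.

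Then I would compute the dual side. Under $\widehat{\mathbb{Z}}=\mathbb{T}$ the annihilator of the constant stabilizer is
$$G_x^{\perp}=(q\mathbb{Z})^{\perp}=\{\tau\in\mathbb{T}:\tau^{q}=1\}=\mu_q$$
for every $x$. Because $X$ is $T_0$ after passing to orbits and $G_x^{\perp}$ is the constant subgroup $\mu_q$, the note preceding the proposition reduces the relation on $(\mathbb{Z}\backslash X)^{\sim}\times\widehat{\mathbb{Z}}$ to a quotient in the second variable only, giving $(\mathbb{Z}\backslash X)^{\sim}\times\widehat{\mathbb{Z}}/\!\sim\ =(\mathbb{Z}\backslash X)\times(\widehat{\mathbb{Z}}/\mu_q)$, and $\widehat{\mathbb{Z}}/\mu_q=\mathbb{T}/\mu_q\cong\mathbb{T}$ via $\tau\mapsto\tau^{q}$. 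Combining the identification of the spectrum with $X\times\widehat{\mathbb{Z}}/\!\sim$, Remark 8.42 in \cite{wi07}, and the computations above, the spectrum is homeomorphic to
$$(\mathbb{Z}\backslash X)\times(\widehat{\mathbb{Z}}/\mu_q)\cong(\mathbb{T}\cup[1/2,1))\times\mathbb{T},$$
as claimed.

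Setting aside the group‑duality bookkeeping, I expect the main obstacle to be the second step: checking carefully that the orbit space $\mathbb{Z}\backslash X$, equipped with the quotient topology, is genuinely homeomorphic to $\mathbb{T}\cup[1/2,1)$ — in particular at the points where the spikes meet the circle — rather than merely set‑theoretically in bijection. The finiteness of the orbits, which makes the quotient Hausdorff, is precisely what both tames this identification and guarantees that the abelian spectrum machinery applies cleanly.
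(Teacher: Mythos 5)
Your strategy is the same as the paper's: apply Williams' machinery for crossed products by abelian groups with constant stabilizer $q\mathbb{Z}$, identify the orbit space $\mathbb{Z}\backslash\psi(\mathbb{T})$ with $\mathbb{T}\cup[1/2,1)$, and identify $\widehat{\mathbb{Z}}/(q\mathbb{Z})^{\perp}$ with $\mathbb{T}$. There is, however, one concrete gap. The results you invoke (Theorem 8.39 and Remark 8.42 of \cite{wi07}) describe the \emph{primitive ideal space} $\mathrm{Prim}$ of the crossed product, not its spectrum, and the proposition is a statement about the spectrum; for a general $C^*$-algebra the canonical surjection $\widehat{A}\rightarrow\mathrm{Prim}(A)$ need not be injective, so the two spaces cannot be identified for free. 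The paper closes this by noting that every $\mathbb{Z}$-orbit is finite, hence closed, so by Theorem 8.44 of \cite{wi07} the crossed product is liminal, and then by Theorem 5.6.4 of \cite{mu90} the spectrum is homeomorphic to $\mathrm{Prim}$. Your proposal needs this step (or some substitute justification) before the conclusion can be stated for the spectrum.

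The other place where you stop short is the verification that $\mathbb{Z}\backslash\psi(\mathbb{T})$, with the quotient topology, is homeomorphic to $\mathbb{T}\cup[1/2,1)$; you correctly flag it as the main obstacle but leave it as a plan. The paper carries it out by writing the explicit map $z\mapsto z^{q}$ on the circle and $z\mapsto z^{q}/|z|^{q-1}$ on the spikes (this preserves $|z|$ and multiplies the argument by $q$, folding the $q$ spikes onto one), checking continuity of the induced map on the orbit space via openness of the quotient map $p:\psi(\mathbb{T})\rightarrow\mathbb{Z}\backslash\psi(\mathbb{T})$ (so that $f^{-1}(U)=p(g^{-1}(U))$ is open), and concluding it is a homeomorphism because the source is compact and the target Hausdorff. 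Your structural observations (order of the rotation, constant stabilizers, Hausdorff orbit space, $(q\mathbb{Z})^{\perp}=\mu_q$ and $\mathbb{T}/\mu_q\cong\mathbb{T}$) are all correct and match the paper; with the liminality step added and the orbit-space homeomorphism actually verified, your argument coincides with the paper's proof.
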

\begin{proof}
Consider the map $$f:\mathbb{Z}\backslash \psi(\mathbb{T})\rightarrow \mathbb{T}\cup [1/2,1)$$
\begin{equation*}
\mathbb{Z}\cdot z \mapsto \left\{
\begin{array}{rl}
z^q& \text{if}~~~z\in \mathbb{T},\\
z^q/|z|^{q-1} & \text{otherwise.}
\end{array} \right.
\end{equation*}
Since the map $$g:\psi(\mathbb{T})\rightarrow \mathbb{T}\cup [1/2,1)$$
\begin{equation*}
 z \mapsto \left\{
\begin{array}{rl}
z^q& \text{if}~~~z\in \mathbb{T},\\
z^q/|z|^{q-1} & \text{otherwise}
\end{array} \right.
\end{equation*}
is continuous, $g^{-1}(U)$ is open in $\psi(\mathbb{T})$ for each open set $U$ in $\mathbb{T}\cup [1/2,1)$. Since the quotient map $p:\psi(\mathbb{T})\rightarrow \mathbb{Z}\backslash \psi(\mathbb{T})$  is open (Lemma 3.25 in \cite{wi07}), $f^{-1}(U)=p(g^{-1}(U))$ is open in $\mathbb{Z} \backslash \psi(\mathbb{T})$ and so $f$ is continuous. On the other hand, $\mathbb{Z}\backslash \psi(\mathbb{T})$ is compact, therefore $f$ is a homeomorphism. Also $\mathbb{Z}\backslash \psi(\mathbb{T})$ is $T_0$ and $\mathbb{Z}$ and $\psi(\mathbb{T})$ are second countable, hence by Theorem 8.39 and Remark 8.42 in \cite{wi07},
$$Prim(C(\psi(\mathbb{T}))\rtimes_{\varphi}\mathbb{Z})\cong (\mathbb{Z}\backslash \psi(\mathbb{T}))^{\sim}\times\widehat{\mathbb{Z}}/\sim \cong \mathbb{Z}\backslash \psi(\mathbb{T})\times \widehat{\mathbb{Z}}/(q\mathbb{Z})^{\perp}$$
 $$\cong (\mathbb{T}\cup [1/2,1))\times \widehat{q\mathbb{Z}}\cong (\mathbb{T}\cup [1/2,1))\times \mathbb{T}.$$
  But all $\mathbb{Z}$-orbits $\mathbb{Z}\cdot z$ are finite (and so closed in $\psi(\mathbb{T})$), therefore by Theorem 8.44 in \cite{wi07}, $C(\psi(\mathbb{T}))\rtimes_{\varphi}\mathbb{Z}$ is a liminal $C^*$-algebra and by Theorem 5.6.4 in \cite{mu90}, $Prim(C(\psi(\mathbb{T}))\rtimes_{\varphi}\mathbb{Z})$ is homeomorphic to the spectrum $(C(\psi(\mathbb{T}))\rtimes_{\varphi}\mathbb{Z})^\wedge$, as required.
\end{proof}
Now we could finish the argument for the general example. The spectrums of the $C^*$-algebras $C(\psi(\mathbb{T}))\rtimes_{\varphi}\mathbb{Z}$ and $C(\mathbb{T})\rtimes_{\varphi}\mathbb{Z}$
are $\psi(\mathbb{T})\times \mathbb{T}$ and $\mathbb{T}^2$, which are not homeomorphic. Hence these $C^*$-algebras are not isomorphic.



\begin{thebibliography}{9}
\bibitem{blns03} P. Bourdon, D. Levi, S. Narayan, J. Shapiro, Which linear fractional composition operators
      are essentially normal?, J. Math. Anal. Appl. 280 (2003), 30-53.
\bibitem{bh64} A. Brown and P. R. Halmos, Algebraic properties of Toeplitz operators, J. Reine Angew.
Math. 213 (1964), 89-102.
\bibitem{ca13} C. Carath\'{e}odory, \"{U}ber die gegenseitige Beziehung der R\"{a}nder bei der konformen Abbildung des Inneren einer Jordanschen Kurve auf einen Kreis, Mathematische Annalen. 73(2) (1913), 305-320.
\bibitem{co67} L. Coburn, The $C^*$
     -algebra generated by an isometry I, Bull. Amer. Math. Soc. 73
    (1967), 722-726.
\bibitem{co69} L. Coburn, The $C^*$
     -algebra generated by an isometry II, Trans. Amer. Math. Soc. 137
 (1969), 211-217.
\bibitem{gr94} J. Gray, On the history of thr Riemann mapping theorem, Rendiconti Circolo Matematico di Palermo, serie II. supplemento N. 34 (1994), 47-94.
\bibitem{Ju07-1} M. T. Jury, $C^*$-algebras generated by groups of composition operators, Indiana Univ. Math. J. 56 (2007), 3171-3192.
\bibitem{Ju07-2} M. T. Jury, The Fredholm index for elements of Toeplitz-composition $C^*$
-algebras, Integral Equations Operator Theory 58 (2007), 341-362.
\bibitem{kmm07} T. Kriete, B. MacCluer, J. Moorhouse, Toeplitz-composition $C
    ^*$-algebras, J. Operator Theory 58 (2007), 135-156.
\bibitem{kmm10} T. Kriete, B. MacCluer, J. Moorhouse, Composition operators within singly
    generated composition $C^*$-algebras, Israel J.
    Math. 179 (2010), 449-477.
\bibitem{mps}A. Montes-Rodr\'{\i}guez, M. Ponce-Escudero and S. Shkarin, Invariant
    subspaces of parabolic self-maps in the Hardy space, Math. Res. Lett. 17 (2010), no.1, 99–107.
\bibitem{mu90} G. Murphy, $C^{*}$-Algebras and Operator Theory,
    Academic Press, New York, 1990.
\bibitem{no67} E. A. Nordgren, Reducing subspaces of analytic Toeplitz operators, Duke Math. J. 34 (1967), 175-182.
\bibitem{Qu13} K. S. Quertermous, Fixed point composition and Toeplitz-composition $C
    ^*$-algebras, J. Funct. Anal. 265 (2013), 743-764.
\bibitem{sh93} J. H. Shapiro, Composition Operators and Classical Function Theory, Springer Verlag, Berlin, 1993.
\bibitem{wi07}D. P. Williams, Crossed Products of C*-Algebras, Amer. Math. Soc., Providence, RI, 2007.
 \end{thebibliography}
\end{document}